\title{Stability and convergence in discrete convex monotone dynamical 
systems}
\author{Marianne Akian, St\'ephane Gaubert \footnote{The first two authors were supported by the Arpege  programme of the French National Agency of Research (ANR), project ``ASOPT'', number ANR-08-SEGI-005 and by the Digiteo project DIM08 ``PASO'' number 3389.} \\
\small INRIA and CMAP, \'Ecole Polytechnique,\\
\small  91128 Palaiseau Cedex, France.\\
\small {\tt marianne.akian@inria.fr}, {\tt stephane.gaubert@inria.fr}\\
\mbox{}\\
and \\
\mbox{}\\
Bas Lemmens,\\
\small SMSAS, University of Kent,\\
\small Canterbury, CT2 7NF, United Kingdom\\
\small {\tt B.Lemmens@kent.ac.uk}}
\let\phi=\varphi
\let\theta=\vartheta
\let\epsilon=\varepsilon
\newtheorem{theorem}{Theorem}[section]
\newtheorem{Definition}[theorem]{Definition}
\newtheorem{lemma}[theorem]{Lemma}
\newtheorem{corollary}[theorem]{Corollary}
\newtheorem{proposition}[theorem]{Proposition}
\newtheorem{hypothesis}[theorem]{Hypothesis}
\newenvironment{definition}{\begin{Definition}\rm}{\end{Definition}}
\begin{document}
\maketitle

\begin{abstract}
We study the stable behaviour of discrete dynamical systems where the map is convex and monotone with respect to the standard positive cone. The notion of tangential stability for fixed points and periodic points is introduced, which is weaker than Lyapunov stability. Among others we show that the set of tangentially stable fixed points is isomorphic to a convex inf-semilattice, and a criterion is given for the existence of a unique tangentially stable fixed point. We also show that periods of tangentially stable  periodic points are orders of permutations on $n$ letters, where $n$ is the dimension of the underlying space, and a sufficient condition for global convergence to periodic orbits is presented.     
\end{abstract} 

\section{Introduction}
Many natural dynamical systems preserve a type of ordering on the state space. 
Such dynamical systems are called monotone and often display rather simple behaviour. In the last couple of decades monotone dynamical systems have been studied intensively, see \cite{HS} for an up-to-date survey. Ground breaking work on monotone dynamical systems was done by Hirsch \cite{H1,H2}, who showed, among others, that  in a continuous time strongly monotone dynamical system almost all pre-compact orbits converge to the set of equilibrium points. In a discrete time strongly monotone dynamical system one has generic convergence to periodic orbits under appropriate conditions on the map, see \cite{DH,HessPol,PT1}. 
Various additional conditions have been  studied to obtain convergence of all orbits instead of almost all orbits. A type of concavity condition, also called sub-homogeneity, has received a great deal of attention, see  \cite{AGLN,KN,LSp,Sm,T2}. The concavity condition makes the dynamical system non-expansive, which allows one to prove strikingly detailed results concerning their behaviour.  

In this paper we study discrete time dynamical systems 
\begin{equation}\label{eq:1.1}
x^{k+1}=f(x^k)\mbox{\quad for }k=0,1,2,\ldots, 
\end{equation}
where $f\colon\mathcal{D}\to\mathcal{D}$ is a convex monotone map on $\mathcal{D}\subseteq\mathbb{R}^n$ preserving the partial ordering induced by the standard positive cone. Such dynamical systems are in general not non-expansive. 
We introduce the notion of tangential stability for fixed points and periodic points, which is weaker than Lyapunov stability. It turns out that this notion is the right one to prove a variety of results concerning the stable behaviour of monotone convex dynamical systems, which are of comparable detail as the ones for monotone non-expansive dynamical systems. 

In particular, we show that the tangentially stable fixed point set is isomorphic to a convex inf-semilattice in $\mathbb{R}^n$.  We also give a criterion for the existence of a  unique  tangentially stable fixed point. In addition, tangentially stable periodic orbits are analysed and a condition is presented under which there is global convergence to Lyapunov stable periodic orbits. Among others it is shown that the periods of tangentially stable periodic points divide the cyclicity of the critical graph, which implies that the periods are orders of permutations on $n$ letters. However, the periods of unstable periodic orbits can be arbitrary large. 

The results are a continuation of \cite{AG} in which the first two authors  studied discrete time dynamical systems (\ref{eq:1.1}), where 
$f\colon\mathbb{R}^n\to\mathbb{R}^n$ is not only convex and monotone, but also  additively sub-homogeneous. 
The extra sub-homogeneity condition makes the dynamical system non-expansive under the sup-norm \cite{CT}. The non-expansiveness property severely constrains 
the complexity of its behaviour \cite{LS2,N1} and  makes all fixed points and periodic orbits  Lyapunov stable. It also ensures that the subdifferential of $f$ at a fixed point consists of row-stochastic matrices. Without the additively homogeneity condition, the subdifferential merely consists of stable nonnegative matrices, which makes the analysis more subtle. 

Motivating examples of discrete  convex monotone dynamical systems arise  in Markov decision processes and game theory as value iteration schemes, see \cite{AG} and the references therein. The results in this paper extend results for Markov decision processes with  sub-stochastic transition matrices to arbitrary nonnegative matrices. In particular, they apply to Markov decision processes with negative discount rates, see \cite{Roth2}.  Discrete convex monotone dynamical systems are also  used in static analysis of programs by abstract interpretation \cite{goubault}, i.e., automatic verification  of variables in computer programs. 
They also appear in  the theory of discrete event systems \cite{BCOQ}, statistical mechanics \cite{N2}, and in the analysis of imprecise Markov chains \cite{dC}. At the end of Section 2  we give several explicit examples.

The paper contains nine sections. In Section 2  several basic definitions and properties of convex monotone maps are collected. 
Subsequently various degrees of stability of fixed points of convex monotone maps are discussed and the notion of tangential stability is introduced. 
In Section 4 several preliminary results concerning stable nonnegative matrices are given. Among others rectangular sets of stable nonnegative matrices are studied. 
In Section 5 we analyse tangentially stable fixed points and introduce the critical graph of a monotone convex map. Section 6 is used to collect several preliminary results concerning convex monotone positively homogeneous maps that are needed in the analysis of the geometry of the tangentially stable fixed point set. 
Section 7 contains the main result on the geometry of the tangentially stable fixed point set. Section 8 concerns tangentially stable periodic points and their periods. In the final section  a criterion is given under which each orbit of a discrete time convex monotone dynamical systems converges to a Lyapunov stable periodic orbit.  

\section{Basic properties of convex monotone maps} 
Let $\mathbb{R}^n_+=\{x\in\mathbb{R}^n\colon x_i\geq 0\mbox{ for } 1\leq i\leq n\}$ denote 
the \emph{standard positive cone}. 
The cone $\mathbb{R}^n_+$ induces a partial ordering on $\mathbb{R}^n$ by $x\leq y$ 
if $y-x\in\mathbb{R}^n_+$. 
We write $x\ll y$ if $y-x$ is in the interior of $\mathbb{R}^n_+$. 
In particular, we say that $x$ is \emph{positive} if $0\ll x$. 
For $x,y\in\mathbb{R}^n$ we also use the notation $x\geq y$ and $x\gg y$ with the obvious interpretation. 
A set $\mathcal{X}\subseteq\mathbb{R}^n$ is called 
\emph{bounded from above} if there exists 
$u\in\mathbb{R}^n$ such that $x\leq u$ for all $x\in  \mathcal{X}$. 
Similarly, we say that $\mathcal{X}\subseteq\mathbb{R}^n$ is 
\emph{bounded from below} if there 
exists $l\in\mathbb{R}^n$ such that $l\leq x$ for all $x\in \mathcal{X}$. 
The partially ordered vector space $(\mathbb{R}^n,\leq)$ is a vector lattice, where the 
binary relations $\wedge$ and $\vee$ are defined as follows. For $x,y\in
(\mathbb{R}^n,\leq)$,  $x\wedge y$ is the greatest lower bound of $x$ and $y$, so $(x\wedge y)_i=\min\{x_i,y_i\}$ for $1\leq i\leq n$, and $x\vee y$ is least upper bound of $x$ and $y$, so $(x \vee y)_i=\max\{x_i,y_i\}$ for $1\leq i\leq n$. 
  
A map $f\colon \mathcal{D}\to\mathbb{R}^n$, where $\mathcal{D}\subseteq\mathbb{R}^m$, is called \emph{monotone} if for each $x,y\in\mathcal{D}$ with $x\leq y$ we have that $f(x)\leq f(y)$.  It is called \emph{strongly monotone} if $x\leq y$ and $x\neq y$ implies that $f(x)\ll f(y)$. 
A map $f\colon \mathcal{D}\to\mathbb{R}^n$, where 
$\mathcal{D}\subseteq\mathbb{R}^m$ is convex, is called \emph{convex} if 
\[
f(\lambda x+(1-\lambda)y)\leq \lambda f(x)+(1-\lambda)f(y)\mbox{\quad for all } 
0\leq \lambda\leq 1\mbox{ and }x,y\in\mathcal{D}. 
\]
In other words, $f\colon\mathcal{D}\to\mathbb{R}^n$ is convex if each coordinate function is convex in the usual sense. 
The reader may note that our notion of monotonicity is different from the one commonly used in convex analysis \cite{Rock}. 

The \emph{orbit} of $x\in\mathcal{D}$ under a map $f\colon\mathcal{D}\to\mathcal{D}$ is given by $\mathcal{O}(x;f)=\{f^k(x)\colon k=0,1,2,\ldots\}$. We say that $x\in\mathcal{D}$ is a \emph{periodic point} of $f\colon\mathcal{D}\to\mathcal{D}$ if $f^p(x)=x$ for some integer $p\geq 1$, and the minimal such $p\geq 1$ is called the \emph{period} of $x$ under $f$. 

Let  $\mathcal{M}_{m,n}$ denote the set of all $m\times n$ real matrices, and let 
$\mathcal{P}_{m,n}$  be the set of all nonnegative matrices in $\mathcal{M}_{m,n}$. 
A matrix $P\in \mathcal{P}_{m,n}$ is called \emph{positive} if $p_{ij}>0$ for all 
$1\leq i\leq m$ and $1\leq j\leq n$. 
Given a matrix $M\in \mathcal{M}_{m,n}$ we denote its rows by 
$M_1, \ldots, M_m\in\mathcal{M}_{1,n}$, and we identify $M$ with the $m$-tuple 
$(M_1,\ldots,M_m)$. So, $\mathcal{M}_{m,n}$ is identified with the $m$-fold direct product 
\[
\mathcal{M}_{m,n}= \mathcal{M}_{1,n}\times \ldots\times\mathcal{M}_{1,n}.
\]
We say that $\mathcal{R}\subseteq \mathcal{M}_{m,n}$ is \emph{rectangular} if 
$\mathcal{R}$ can be written as 
$\mathcal{R}=\mathcal{R}_1\times\ldots\times\mathcal{R}_m$, where $\mathcal{R}_1,\ldots,\mathcal{R}_m$ are non-empty subsets of $\mathcal{M}_{1,n}$. 
Furthermore it is convenient to introduce the following matrix notation. 
Given $M\in\mathcal{M}_{m,n}$, $I\subseteq\{1,\ldots,m\}$, and $J\subseteq\{1,\ldots,n\}$ we write $M_{IJ}$ to denote the $|I|\times|J|$ sub-matrix of $M$ with row indices in $I$ and column indices in $J$. 
Likewise, given $x\in\mathbb{R}^n$ and $K\subseteq\{1,\ldots,n\}$ we write $x_K\in\mathbb{R}^K$ to denote the vector in $\mathbb{R}^K$ obtained by restricting $x$ to its coordinates in $K$. 

For  a convex map $f\colon \mathcal{D}\to\mathbb{R}^n$, where $\mathcal{D}\subseteq\mathbb{R}^m$ is open and convex, the \emph{subdifferential of $f$ at $v\in\mathcal{D}$} is defined by,
\begin{equation}\label{eq:2.1}
\partial f(v)=\{M\in\mathcal{M}_{m,n}\colon f(x)-f(v)\geq M(x-v)\mbox{ for all }
x\in\mathcal{D}\}. 
\end{equation}
In the following proposition  several basic facts concerning the subdifferential 
are collected, cf.\ \cite[Theorem 23.4]{Rock}. 
\begin{proposition}\label{prop:2.1}
If $f$ is a convex map from an open convex subset $\mathcal{D}\subseteq\mathbb{R}^m$ to $\mathbb{R}^n$, then for each $v\in\mathcal{D}$, the set
$\partial f(v)$ is non-empty, compact, convex and rectangular.   
\end{proposition}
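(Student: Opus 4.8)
The plan is to reduce the statement to the classical one-variable theory of subdifferentials by exploiting the coordinatewise structure of a convex map. Since $f=(f_1,\dots,f_n)$ is convex precisely when each coordinate function $f_i\colon\mathcal{D}\to\mathbb{R}$ is convex in the usual sense, and the inequality defining $\partial f(v)$ in \eqref{eq:2.1} holds if and only if it holds row by row, one has
\[
\partial f(v)=\partial f_1(v)\times\cdots\times\partial f_n(v),
\]
where $\partial f_i(v)$ denotes the ordinary subdifferential of the real-valued convex function $f_i$ at $v$ (a set of row vectors). This identity already shows that $\partial f(v)$ is rectangular, and, since non-emptiness, compactness, and convexity are all preserved under a finite Cartesian product, it reduces the proposition to the corresponding statements for a single convex function $f_i$ on the open convex set $\mathcal{D}$.

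For a fixed $i$, convexity and closedness of $\partial f_i(v)$ are immediate: it is the intersection over $x\in\mathcal{D}$ of the closed half-spaces $\{M_i : M_i(x-v)\le f_i(x)-f_i(v)\}$, hence an intersection of closed convex sets. Thus compactness of $\partial f_i(v)$ reduces to its boundedness, and non-emptiness is the only remaining assertion.

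Both of these last points rest on the fact that a finite convex function on an open convex set is locally Lipschitz near each interior point; this is where the openness of $\mathcal{D}$ enters essentially. I would either invoke \cite[Theorem 23.4]{Rock} directly, or argue by hand: choose a small closed ball (or simplex) $B\subseteq\mathcal{D}$ centred at $v$; convexity bounds $f_i$ above on $B$ by its values at finitely many extreme points, and then bounds it below as well, so $f_i$ is bounded on $B$; a standard estimate then yields a Lipschitz constant $L$ for $f_i$ on a slightly smaller ball. Any $M_i\in\partial f_i(v)$ satisfies $M_i h\le f_i(v+h)-f_i(v)\le L|h|$ and, replacing $h$ by $-h$, also $-M_i h\le L|h|$, whence $\|M_i\|\le L$; this gives boundedness. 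Non-emptiness follows because $v$ lies in the interior of the domain of $f_i$, so the epigraph of $f_i$ has a non-vertical supporting hyperplane at $(v,f_i(v))$, whose slope is an element of $\partial f_i(v)$ (again \cite[Theorem 23.4]{Rock}).

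Finally, assembling the factors: each $\partial f_i(v)$ is non-empty, convex, closed and bounded, hence compact, and therefore the product $\partial f(v)$ is non-empty, convex, compact and, by construction, rectangular. The only step requiring genuine work rather than bookkeeping is the local Lipschitz/boundedness estimate for the scalar convex functions, and even that is standard; everything else is the observation that the matrix subdifferential splits as a product over the rows.
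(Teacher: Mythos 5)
Your proof is correct and follows essentially the same route as the paper: reduce to the coordinatewise decomposition $\partial f(v)=\partial f_1(v)\times\cdots\times\partial f_n(v)$ (which immediately yields rectangularity), and then invoke or reproduce the standard facts about subdifferentials of a finite scalar convex function at an interior point of its domain, which is exactly what the paper's citation to \cite[Theorem 23.4]{Rock} supplies. The only difference is that you spell out the local-Lipschitz and supporting-hyperplane arguments that the paper leaves to the reference.
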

We note that the rectangularity of $\partial f(v)$ follows directly from the fact that 
$f(x)-f(v)\geq M(x-v)$ is equivalent to $f_i(x)-f_i(v)\geq M_i(x-v)$ for all $1\leq i\leq m$. 
If, in addition, the map is monotone, then $\partial f(v)$ consists of nonnegative matrices as the following proposition shows. 
\begin{proposition}\label{prop:2.2} 
If $f\colon\mathcal{D}\to\mathbb{R}^n$, where $\mathcal{D}\subseteq\mathbb{R}^m$ is open and convex, is a convex monotone map, then $\partial f(v)\subseteq \mathcal{P}_{m,n}$ for all $v\in\mathcal{D}$. 
Moreover, if $f$ is strongly monotone, then each $P\in\partial f(v)$ is positive.
\end{proposition}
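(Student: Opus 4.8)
The plan is to test the defining inequality of the subdifferential against small perturbations of $v$ in the negative coordinate directions. Fix $P\in\partial f(v)$, write $p_{ij}$ for its entries and $P_i$ for its $i$-th row, and let $e_j$ denote the $j$-th standard basis vector of the domain. Since $\mathcal{D}$ is open, there is $\epsilon>0$ such that $v-te_j\in\mathcal{D}$ for all $0<t<\epsilon$.

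First I would record the two inequalities that the point $v-te_j$ produces. On one hand, $P\in\partial f(v)$ gives, coordinate by coordinate,
\[
f_i(v-te_j)-f_i(v)\ \geq\ P_i(v-te_j-v)\ =\ -t\,p_{ij}.
\]
On the other hand, $v-te_j\leq v$, so monotonicity of $f$ yields $f_i(v-te_j)\leq f_i(v)$, that is $f_i(v-te_j)-f_i(v)\leq 0$. Chaining the two inequalities gives $-t\,p_{ij}\leq 0$, and dividing by $t>0$ shows $p_{ij}\geq 0$. Since $i$ and $j$ are arbitrary, $P\in\mathcal{P}_{m,n}$, and since $P$ was an arbitrary element of $\partial f(v)$, this proves the first assertion.

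For the strongly monotone case I would repeat the argument, observing that $v-te_j\leq v$ and $v-te_j\neq v$ (because $t>0$), so strong monotonicity gives $f(v-te_j)\ll f(v)$, hence $f_i(v-te_j)-f_i(v)<0$ for every $i$. Combined with $f_i(v-te_j)-f_i(v)\geq -t\,p_{ij}$ this forces $-t\,p_{ij}<0$, so $p_{ij}>0$ for all $i,j$, i.e.\ $P$ is positive.

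There is no serious obstacle here: the only point requiring a word of care is that $v-te_j$ must lie in $\mathcal{D}$, which is guaranteed by openness of $\mathcal{D}$ for $t$ small enough, and that the conclusion is uniform over $\partial f(v)$, since each step uses only the subgradient inequality and (strong) monotonicity, both of which hold for every element of $\partial f(v)$.
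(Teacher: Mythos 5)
Your proof is correct and follows essentially the same approach as the paper: both test the subgradient inequality at nearby points $v-u$ with $u\geq 0$ small (the paper uses a general such $u$, you specialize to $u=te_j$) and combine it with (strong) monotonicity to force the entries of $P$ to be nonnegative (resp.\ positive).
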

\begin{proof}
If $v\in\mathcal{D}$, then there exists $\mathcal{U}$ open neighbourhood of $0$ such that $v-u \in \mathcal{D}$ for all $u\in\mathcal{U}$. Now if $u\geq 0$, with $u\in\mathcal{U}$, and  $P\in\partial f(v)$, then 
$0\geq f(v-u)-f(v)\geq -Pu$, as $f$ is monotone. 
Thus, $Px\geq 0$ for all $x\in\mathbb{R}^n_+$ and hence $P\in\mathcal{P}_{m,n}$.
We note that if $f$ is strongly monotone, $u\geq 0$ and $u\neq 0$, then $0\gg f(v-u)-f(v)\geq -Pu$. 
This implies that $Px\gg 0$ for all $x\in\mathbb{R}^n_+\setminus \{0\}$ and 
therefore $P$ is positive. 
\end{proof}

For a convex map $f\colon \mathcal{D}\to\mathbb{R}^n$, where $\mathcal{D}\subseteq\mathbb{R}^m$ is open and convex, and $v\in\mathcal{D}$, the \emph{one-sided directional derivative of $f$ at $v$} is given by, 
\begin{equation}\label{eq:2.2} 
f'_v(y)=\lim_{\epsilon\downarrow 0}\frac{f(v+\epsilon y)-f(v)}{\epsilon}.
\end{equation}
The map $f'_v\colon\mathbb{R}^m\to\mathbb{R}^n$ is well-defined, convex, finite 
valued and positively homogeneous,
meaning that $f'_v(\lambda x)=\lambda f'_v(x)$ for all $\lambda >0$ and $x\in\mathbb{R}^m$, see \cite[Theorem 23.1]{Rock}. 
Moreover, $f'_v$ is monotone (because it is defined as a pointwise limit of monotone maps). We shall occasionally need the following representation of $f'_v$:
\begin{equation}\label{eq:2.3}
f'_v(y)=\sup_{P\in\partial f(v)} Py\mbox{\quad for }y\in\mathbb{R}^m
\end{equation}
(see \cite[Theorem 23.4]{Rock}). 
If $f\colon\mathcal{D}\to\mathcal{D}$, where $\mathcal{D}\subseteq\mathbb{R}^n$ is open and convex, then for each $v\in\mathcal{D}$ we also have that 
\[
(f'_v)^k=f'_{f^{k-1}(v)} \circ \cdots \circ f'_{f(v)}\circ f'_v
\]
(see \cite[Lemma 4.3]{AG}). 

Throughout the remainder of the exposition we shall make the following assumption on the domain of convex monotone maps $f\colon\mathcal{D}\to\mathcal{D}$. 
\begin{hypothesis} \label{hyp:2.3}
The set $\mathcal{D}\subseteq\mathbb{R}^n$ is open and convex. 
\end{hypothesis}
Although in some results more general domains can be treated, we restrict ourselves to this case, as it simplifies the presentation.

To conclude this section we briefly discuss several examples of convex monotone maps. In the theory of Markov decision processes one considers monotone convex maps $f\colon\mathbb{R}^n\to\mathbb{R}^n$ of the form: 
\begin{equation} \label{sup}
f_i(x)= \sup_{j\in A_i} r^j_i +p^j\cdot x\mbox{\quad for }i=1,\ldots,n.
\end{equation}
Here $r^j_i\in\mathbb{R}$ and $p^j$ is a sub-stochastic vector for each $j\in A_i$.  
The results in this paper apply to the case where $p^j$ is merely a nonnegative vector.  

Other examples arise in the study of systems of polynomial equations, $x=P(x)$, where $P=(P_1,\ldots,P_n)$ and each $P_i$ is a polynomial with variables $x_1,\ldots,x_n$ and  nonnegative coefficients. Looking for a 
positive solution of $x=P(x)$ is equivalent to finding
a fixed point of the map $f(x)=\mathrm{Log} \circ P\circ \mathrm{Exp}$,
where $\mathrm{Exp}(x_1,\ldots,x_n)=(e^{x_1},\ldots,e^{x_n})$ 
and $\mathrm{Log}$ denotes its inverse.  We can write 
$P_i(x)=\sum_{j\in A_i}a_{ij}x^j$,
where $A_i\subseteq\mathbb{N}^n$ is a finite set and each $a_{ij}\geq 0$, with the convention that $x^j=x_1^{j_1}\cdots x_n^{j_n}$
for $j=(j_1,\ldots,j_n)\in\mathbb{N}^n$. Then   
\begin{align}
f_i(x)=\log(\sum_{j\in A_i}a_{ij}\exp( j\cdot x)).
\label{e-posynomial}
\end{align}
Such ``log-exp'' functions are not only monotone, but also convex, see \cite[Example 2.16]{rockwets}. More generally, we could allow $A_i$ to be a subset $\mathbb{R}_+^n$, instead of $\mathbb{N}^n$. This yields a class of functions $P_i$ which are usually   called {\em posynomials} \cite{boyd}. Posynonmials  play a role in  static analysis of programs by abstract interpretation \cite{goubault}.
Note that the example in (\ref{sup}) can be  obtained as a limit of posynomials
by setting $a_{ij}=e^{\beta r_i^j}$ and
\[
f^\beta_i(x):=\beta^{-1}\log(\sum_{j\in A_i}\exp(\beta(r_i^j+ p^j\cdot x))).
\]
If $\beta$ tends to $+\infty$, then $f^\beta_i$ converges to the map (\ref{sup}), see \cite{viro}.

\section{Stability of fixed points}
Recall that a fixed point $v\in\mathcal{D}$ of $f\colon\mathcal{D}\to\mathcal{D}$ is 
\emph{Lyapunov stable} if for each neighbourhood $\mathcal{U}$ of $v$, 
there exists a neighbourhood $\mathcal{V}\subseteq \mathcal{D}$ of $v$ such that  
$v\in\mathcal{V}$ and $f^k(\mathcal{V})\subseteq \mathcal{U}$ for all $k\geq 1$. 
An $n\times n$ matrix $P$ is called \emph{stable} if all the orbits of $P$ are bounded. 
Stable matrices have the following well-known characterizations.
\begin{proposition}\label{prop:4.1}
For a matrix $P$ the following assertions are equivalent:
\begin{enumerate}[(i)]
\item $P$ is stable.
\item There exists a norm on $\mathbb{R}^n$ such that the induced matrix norm of $P$ is at most one.
\item All the eigenvalues of $P$ have modulus at most one and all the eigenvalues of modulus one are semi-simple.
\item The origin is a Lyapunov stable fixed point of $P$. 
\end{enumerate}
\end{proposition}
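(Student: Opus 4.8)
The plan is to prove the four equivalences by establishing (ii)$\Rightarrow$(i), the equivalence (i)$\Leftrightarrow$(iv), and then the cycle-closing implications (i)$\Rightarrow$(iii) and (iii)$\Rightarrow$(ii). Two of these are immediate. The implication (ii)$\Rightarrow$(i) is trivial: if $\|\cdot\|$ is a norm on $\mathbb{R}^n$ with induced matrix norm $\|P\|\le 1$, then $\|P^kx\|\le\|P\|^k\|x\|\le\|x\|$ for every $k\ge 0$, so each orbit of $P$ is bounded. For (i)$\Leftrightarrow$(iv): if the origin is Lyapunov stable, take $\mathcal{U}$ to be the open unit ball; there is $\delta>0$ with $\|x\|<\delta$ implying $\|P^kx\|<1$ for all $k$, and rescaling an arbitrary $y\neq 0$ to $\tfrac{\delta}{2\|y\|}y$ shows $\|P^ky\|\le\tfrac{2}{\delta}\|y\|$, so all orbits are bounded. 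Conversely, if all orbits are bounded, then in particular the orbits of the standard basis vectors are bounded; since these are the columns of the matrices $P^k$ and $\mathbb{R}^n$ is finite-dimensional, this forces $C:=\sup_{k\ge 0}\|P^k\|<\infty$, and then $\|x\|<\varepsilon/C$ implies $\|P^kx\|<\varepsilon$ for all $k$, which is Lyapunov stability of the origin.

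For (i)$\Rightarrow$(iii), I would pass to the complexification: boundedness of all real orbits is equivalent to boundedness of all complex orbits, since $P$ acts coordinatewise on $\mathbb{C}^n=\mathbb{R}^n\oplus i\mathbb{R}^n$. If $\lambda$ were an eigenvalue with $|\lambda|>1$ and eigenvector $v$, then $\|P^kv\|=|\lambda|^k\|v\|\to\infty$, contradicting (i). If $|\lambda|=1$ admitted a Jordan block of size at least $2$, there would be a vector $w$ with $(P-\lambda I)w=v\neq 0$ an eigenvector, whence $P^kw=\lambda^kw+k\lambda^{k-1}v$ has norm at least $k\|v\|-\|w\|\to\infty$, again a contradiction. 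Hence all eigenvalues have modulus at most one and those of modulus one are semi-simple.

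For (iii)$\Rightarrow$(ii) — the crux of the proof — I would use the Jordan normal form of $P$ over $\mathbb{C}$, writing $\mathbb{C}^n$ as a direct sum of $P$-invariant generalized eigenspaces. On a Jordan block for $\lambda$ with $|\lambda|<1$ one has $J^k=\sum_j\binom{k}{j}\lambda^{k-j}N^j\to 0$; on a block for $\lambda$ with $|\lambda|=1$, semi-simplicity forces the block to be the scalar $\lambda$, whose powers have modulus $1$. In every case the powers of each block are uniformly bounded, so $C:=\sup_{k\ge 0}\|P^k\|<\infty$. Define $\|x\|^\ast:=\sup_{k\ge 0}\|P^kx\|$ for $x\in\mathbb{C}^n$; this is finite (bounded by $C\|x\|$), positively homogeneous, subadditive, and positive definite since $\|x\|^\ast\ge\|x\|$, and it satisfies $\|Px\|^\ast=\sup_{k\ge 1}\|P^kx\|\le\|x\|^\ast$. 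Restricting $\|\cdot\|^\ast$ to the real subspace $\mathbb{R}^n$, which $P$ preserves, yields a norm on $\mathbb{R}^n$ whose induced matrix norm of $P$ is at most one, which is (ii).

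The main obstacle is exactly the implication (iii)$\Rightarrow$(ii), and inside it the uniform boundedness of the sequence $(P^k)_{k\ge 0}$ under the spectral hypothesis; this is where the Jordan form (or, equivalently, the fact that for every $\varepsilon>0$ there is an operator norm with $\|P\|\le\rho(P)+\varepsilon$, applied to each block) must be invoked. Once the powers are known to be uniformly bounded, the construction of the ``adapted'' norm $\sup_k\|P^kx\|$ is routine. The only other point needing a little care is bookkeeping between complex and real eigenvalues, which is handled uniformly by complexifying and then restricting the resulting norm back to $\mathbb{R}^n$.
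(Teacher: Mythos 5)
Your proof is correct. Note, however, that the paper does not actually prove Proposition~\ref{prop:4.1}: it states these four characterizations as ``well-known'' facts about stable matrices and moves on, so there is no proof in the paper to compare against. Your cycle of implications (ii)$\Rightarrow$(i), (i)$\Leftrightarrow$(iv), (i)$\Rightarrow$(iii), (iii)$\Rightarrow$(ii) is the standard one, and each step checks out: the complexification handles (i)$\Rightarrow$(iii) cleanly (the Jordan-chain computation $P^k w=\lambda^k w+k\lambda^{k-1}v$ is correct, and $|\lambda|=1$ gives $\|P^k w\|\ge k\|v\|-\|w\|$); for (iii)$\Rightarrow$(ii), the Jordan-block estimates give $\sup_{k\ge 0}\|P^k\|<\infty$, and $\|x\|^\ast:=\sup_{k\ge 0}\|P^k x\|$ is indeed a norm (it dominates $\|\cdot\|$, hence is positive definite) with $\|Px\|^\ast\le\|x\|^\ast$, and restricting to $\mathbb{R}^n$ (which $P$ preserves, being real) yields the required real norm. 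One could shorten (i)$\Leftrightarrow$(iv) by observing that for a linear map $P$ on a finite-dimensional space, Lyapunov stability of $0$, uniform boundedness $\sup_k\|P^k\|<\infty$, and boundedness of all orbits are all trivially equivalent by homogeneity and the fact that the orbits of the basis vectors span; but your version is also fine.
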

In the analysis of convex monotone dynamical systems, it is useful to distinguish 
various notions of stability that are weaker than the classical Lyapunov stability. 
\begin{proposition}\label{prop:3.1}
 If $f\colon\mathcal{D}\to\mathcal{D}$ is a convex monotone map  with a fixed point 
 $v\in\mathcal{D}$, then the following assertions; 
 \begin{enumerate}[(i)]
 \item $v$ is a Lyapunov stable fixed point, 
 \item there exists a neighbourhood $\mathcal{V}\subseteq\mathcal{D}$ of $v$ such that 
 every orbit of $x\in\mathcal{V}$ is bounded,
 \item there exists a neighbourhood $\mathcal{V}\subseteq\mathcal{D}$ of $v$ such that  every orbit of $x\in\mathcal{V}$ is bounded from above, 
\item every orbit of $f'_v\colon\mathbb{R}^n\to\mathbb{R}^n$ is bounded,
\item every orbit of $f'_v\colon\mathbb{R}^n\to\mathbb{R}^n$ is bounded from above,
\item each $P\in\partial f(v)$ is stable. 
\item every orbit of $f'_v\colon\mathbb{R}^n\to\mathbb{R}^n$ is bounded from below,
\item every orbit of $f\colon\mathcal{D}\to\mathcal{D}$ is bounded from below,
\end{enumerate}
 satisfy the following implications: 
 \[
 (i)\Leftrightarrow (ii)\Leftrightarrow (iii)\Rightarrow (iv)\Leftrightarrow (v)\Rightarrow (vi) 
 \Rightarrow (vii)\Rightarrow (viii). 
 \]
\end{proposition}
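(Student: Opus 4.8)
The plan is to prove the cycle $(i)\Rightarrow(ii)\Rightarrow(iii)\Rightarrow(i)$ first, making these three equivalent, and then the chain $(iii)\Rightarrow(v)\Rightarrow(vi)\Rightarrow(vii)\Rightarrow(viii)$ together with $(iv)\Leftrightarrow(v)$, recovering $(iii)\Rightarrow(iv)$ along the way. The implications $(i)\Rightarrow(ii)$ (apply Lyapunov stability with $\mathcal U$ a bounded neighbourhood of $v$), $(ii)\Rightarrow(iii)$, and $(iv)\Rightarrow(v)$ are immediate. Two elementary tools will be used repeatedly. First, for a convex map $g$ the difference quotient $\epsilon\mapsto\epsilon^{-1}(g(v+\epsilon y)-g(v))$ is nondecreasing in $\epsilon>0$, so $g(v+\epsilon y)-g(v)\ge\epsilon g'_v(y)$ whenever $v+\epsilon y$ lies in the domain; applying this to $g=f^k$ (convex and monotone, being a composition) and using $(f^k)'_v=(f'_v)^k$, which holds because $v$ is a fixed point (cf.\ Section~2 and \cite[Lemma~4.3]{AG}), gives
\[
f^k(v+\epsilon y)\ge v+\epsilon(f'_v)^k(y)\qquad\text{for all }k\ge0\text{ and }\epsilon>0\text{ with }v+\epsilon y\in\mathcal D.
\]
Second, if $P\in\partial f(v)$ then $f'_v(z)=\sup_{Q\in\partial f(v)}Qz\ge Pz$ for all $z$ by \eqref{eq:2.3}, and since $f'_v$ is monotone, iterating yields $(f'_v)^k(y)\ge P^ky$ for all $k$ and $y$.

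For $(iii)\Rightarrow(i)$, fix $\delta>0$ small enough that $[v-\delta\mathbf 1,\,v+\delta\mathbf 1]\subseteq\mathcal V$, where $\mathcal V$ is the neighbourhood from $(iii)$, and put $w=v+\delta\mathbf 1$. By $(iii)$ the orbit of $w$ is bounded above, say $f^k(w)\le u$ for all $k\ge0$; note $u\ge w\gg v$ and, by monotonicity, $v=f^k(v)\le f^k(w)\le u$. Let $\epsilon\in(0,1)$ and let $x$ satisfy $v-\epsilon\delta\mathbf 1\le x\le v+\epsilon\delta\mathbf 1$. Writing $v+\epsilon\delta\mathbf 1=(1-\epsilon)v+\epsilon w$ and using monotonicity and convexity of $f^k$ gives $f^k(x)\le(1-\epsilon)v+\epsilon f^k(w)\le v+\epsilon(u-v)$. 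Writing $v=\tfrac{1}{1+\epsilon}(v-\epsilon\delta\mathbf 1)+\tfrac{\epsilon}{1+\epsilon}w$ and using convexity of $f^k$ gives $v=f^k(v)\le\tfrac{1}{1+\epsilon}f^k(v-\epsilon\delta\mathbf 1)+\tfrac{\epsilon}{1+\epsilon}u$, hence $f^k(x)\ge f^k(v-\epsilon\delta\mathbf 1)\ge v-\epsilon(u-v)$. Thus $f^k(x)\in[v-\epsilon(u-v),\,v+\epsilon(u-v)]$ for all $k\ge1$, and this box shrinks to $\{v\}$ as $\epsilon\downarrow0$; given a neighbourhood $\mathcal U$ of $v$, choosing $\epsilon$ small makes the open box $\{x:\|x-v\|_\infty<\epsilon\delta\}$ a neighbourhood of $v$ whose forward images all lie in $\mathcal U$, so $v$ is Lyapunov stable.

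For the remaining implications: $(iii)\Rightarrow(v)$ — given $y$, pick $\epsilon>0$ with $v+\epsilon y\in\mathcal V$; then $v+\epsilon(f'_v)^k(y)\le f^k(v+\epsilon y)\le u_y$ by the first tool and $(iii)$, so $(f'_v)^k(y)\le\epsilon^{-1}(u_y-v)$ for all $k$. $(v)\Rightarrow(vi)$ — for $P\in\partial f(v)$ the second tool gives $P^ky\le(f'_v)^k(y)\le c_y$ and $-P^ky\le(f'_v)^k(-y)\le c_{-y}$, so every orbit of $P$ is bounded and $P$ is stable. $(vi)\Rightarrow(vii)$ — $\partial f(v)\neq\emptyset$ by Proposition~\ref{prop:2.1}; any $P\in\partial f(v)$ is stable, so $\{P^ky\}_k$ is bounded below and hence so is $(f'_v)^k(y)\ge P^ky$. $(vii)\Rightarrow(viii)$ — for $x\in\mathcal D$ and $\epsilon\in(0,1)$ put $x_\epsilon=(1-\epsilon)v+\epsilon x\in\mathcal D$; then $f^k(x_\epsilon)\ge v+\epsilon(f'_v)^k(x-v)\ge v-\epsilon c\mathbf 1$ by the first tool and $(vii)$, while convexity of $f^k$ gives $f^k(x_\epsilon)\le(1-\epsilon)v+\epsilon f^k(x)$; combining and dividing by $\epsilon$ yields $f^k(x)\ge v-c\mathbf 1$ for all $k$. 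Finally, $(v)\Rightarrow(iv)$ holds because $(v)\Rightarrow(vi)\Rightarrow(vii)$ shows every orbit of $f'_v$ is also bounded below, hence bounded, and then $(iii)\Rightarrow(iv)$ follows from $(iii)\Rightarrow(v)\Rightarrow(iv)$.

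The main obstacle is $(iii)\Rightarrow(i)$: one must manufacture two-sided control of nearby orbits out of the one-sided hypothesis that they are bounded only from above, and the mechanism is precisely the choice of the two convex combinations $v+\epsilon\delta\mathbf 1=(1-\epsilon)v+\epsilon w$ and $v=\tfrac{1}{1+\epsilon}(v-\epsilon\delta\mathbf 1)+\tfrac{\epsilon}{1+\epsilon}w$ fed through convexity of the iterates; some care with the domain conditions (keeping the relevant points and their orbits inside $\mathcal D$) is also required. A secondary subtlety is that one should resist writing $(f'_v)^k(y)=\sup\{P_k\cdots P_1y\}$: only the inequality $(f'_v)^k(y)\ge P^ky$ is used, while the upper bounds on $(f'_v)^k(y)$ are obtained instead via $(f^k)'_v=(f'_v)^k$ and monotonicity of difference quotients.
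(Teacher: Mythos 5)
Your proof is correct, and it diverges from the paper's in one interesting place. The paper closes the equivalence $(i)\Leftrightarrow(ii)\Leftrightarrow(iii)$ by proving $(ii)\Rightarrow(i)$, and its lower bound on nearby orbits relies on the already-established fact $(vi)$: it picks a stable $P\in\partial f(v)$, derives $f^k(v-\lambda u)-v\geq -\lambda P^ku$, and uses stability of $P$ to control $\sup_k\|P^ku\|_u$. You instead prove $(iii)\Rightarrow(i)$ directly, and you obtain the lower bound without any reference to the subdifferential or to $(vi)$: the convex combination $v=\tfrac{1}{1+\epsilon}(v-\epsilon\delta\mathds{1})+\tfrac{\epsilon}{1+\epsilon}w$ fed through the convexity of $f^k$ converts the \emph{upper} bound on the orbit of $w=v+\delta\mathds{1}$ into a \emph{lower} bound $f^k(v-\epsilon\delta\mathds{1})\geq v-\epsilon(u-v)$. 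This is more elementary and better organized: the equivalence of $(i)$, $(ii)$, $(iii)$ is then established self-contained, before the subdifferential is ever mentioned, whereas the paper's argument for $(i)$ quietly depends on the chain $(ii)\Rightarrow\cdots\Rightarrow(vi)$ already having been proved. The remaining implications in your write-up essentially reproduce the paper's reasoning, with one small inefficiency worth flagging: in $(vii)\Rightarrow(viii)$ you introduce $x_\epsilon=(1-\epsilon)v+\epsilon x$ and then eliminate $\epsilon$, but your own first tool applied with $\epsilon=1$ and $y=x-v$ already gives $f^k(x)\geq v+(f'_v)^k(x-v)$ for any $x\in\mathcal D$, which is exactly the paper's one-line argument; the extra convexity step buys you nothing here.
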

\begin{proof}
The implications $(i)\Rightarrow (ii)\Rightarrow (iii)$ and $(iv)\Rightarrow (v)$ are trivial. 
We start by showing that $(iii)$ implies $(v)$. Let $x\geq 0$ be such that 
$v+x\in\mathcal{V}$. 
Remark that $f(v+x)\geq f(v)+f'_v(x)=v+f'_v(x)$. Since $f$ is monotone, we deduce that 
\begin{equation}\label{3.1}
f^k(v+x)\geq v+(f'_v)^k(x)\mbox{\quad for all }k\geq 1,
\end{equation}
and hence $\mathcal{O}(x; f'_v)$ is bounded from above. 
As $f'_v$ is monotone and positively homogeneous, there exist for each $y\in\mathbb{R}^n$, a vector $x\geq 0$ and a scalar $\lambda>0$ such that $y\leq \lambda x$ and 
$v+x\in\mathcal{V}$. Thus, we get that  $f_v'$ has all its orbits bounded from above. 

Next we prove that $(v)$ implies $(vi)$. 
If  $P\in\partial f(v)$, then we  know by (\ref{eq:2.3})  that  $f'_v(x)\geq Px$. 
As $f'_v$ is monotone, we get that $(f'_v)^k(x)\geq P^kx$ for all $k\geq 1$ and $x\in\mathbb{R}^n$ and therefore $P$ has all its orbits bounded from above. This implies that $P$ has all its orbits bounded, since $P$ is linear. 

Suppose that $\partial f(v)$ contains a stable matrix $P$. We deduce from (\ref{eq:2.3}) 
that $(f'_v)^k(x)\geq P^kx$ for all $k\geq 1$ and $x\in\mathbb{R}^n$. As $P$ is stable,  this implies that $\mathcal{O}(x; f'_v)$ is bounded from below, which shows that 
$(vi)$ implies $(vii)$. 

To see that $(vii)$ implies $(viii)$ let $x\in\mathcal{D}$ and note that, by (\ref{3.1}), 
\[
f^k(x)\geq v+(f'_v)^k(x-v)\mbox{\quad for all }k\geq 1.
\]
 As $\mathcal{O}(x-v;f'_v)$ is bounded from below, we get that $\mathcal{O}(x;f)$ is 
also bounded from below. 

Note that $(v)$ implies $(vi)$ and $(vi)$ implies $(vii)$, so that $(iv)$ and $(v)$ 
are equivalent. Similarly, $(iii)$ implies $(viii)$, so that $(ii)$ and $(iii)$ are equivalent. 
It remains to be shown that $(ii)$ implies $(i)$. 

If $(ii)$ holds there exists $u\gg 0$ such that $v+\lambda u\in\mathcal{D}$ for all 
$|\lambda|\leq 1$ and $\mathcal{O}(v+u;f)$ is bounded. 
Let $k\geq 1$ and note that for $0<\lambda <1$,
 \begin{eqnarray*}
\lambda(f^k(v+u) -v) & = & \lambda(f^k(v+u) - v) + 
(1-\lambda) (f^k(v)-v)\\
   & \geq & f^k(\lambda(v+u) + (1-\lambda) v) - v\\
   & = & f^k(v+\lambda u) -v \enspace .
 \end{eqnarray*}
 Take $P\in\partial f(v)$ and remark that $f(x)\geq P(x-v) +v$. 
 This implies that $f^k(x)\geq P^k(x-v)+v$ for all $x\in\mathcal{D}$. 
 Hence 
 \begin{equation}\label{eq:3.3} 
 f^k(v-\lambda u)-v\geq -\lambda P^ku\mbox{\quad for }0<\lambda <1.
 \end{equation}
 As $(ii)$ implies $(vi)$, we know that $P$ is stable. 
 Define a norm $\|\cdot\|_u$ on $\mathbb{R}^n$ by $\|x\|_u=\inf\{\alpha>0\colon -\alpha u\leq x\leq \alpha u\}$ and let \[
 \gamma=\sup_{k\geq 1} \{\|P^k u\|_u,\|f^k(v+u)-v\|_u\}.
 \] 
 Note that $\gamma<\infty$, as $P$ is stable and $\mathcal{O}(v+u;f)$ is bounded. 
Let $\lambda:=\|x-v\|_u$, so that $v-\lambda u\leq x\leq v+\lambda u$.
If $\lambda < 1$, 
we get that $-\lambda \gamma u\leq f^k(x)-v\leq \lambda\gamma u$ for each $k\geq 1$. 
Thus, $\|f^k(x)-v\|_u\leq \gamma\lambda= \gamma\|x-v\|_u$ for all $x\in\mathcal{D}$ 
such that $\|x-v\|_u<1$ and for all $k\geq 1$. Hence $v$ is a Lyapunov stable fixed point. 
\end{proof}

\paragraph{Example 1.} Let $f\colon\mathbb{R}^n\to\mathbb{R}^n$ be given by 
$f(x)=\max\{0,x+x^2\}$ for $x\in\mathbb{R}^n$. Then $f'_0(x)=\max\{0,x\}$ and hence  every orbit of $f'_0$ is bounded. However, the orbit of each $x>0$ is unbounded under 
$f$. This show that $(iv)$ does not imply $(iii)$. 

\paragraph{Example 2.} This example shows that $(vi)$ does not imply $(v)$. 
Consider $h(p)=-p\log p -(1-p)\log (1-p)$ for $p\in [0,1]$, and define $g\colon\mathbb{R}^2\to\mathbb{R}^2$ by 
\[
g(x)=\sup_{p\in [0,1]} px_1+h(p)x_2\mbox{\quad for }x=(x_1,x_2)\in\mathbb{R}^2,
\]
which is the Legendre-Fenchel transform of $-h$. 
Note that $g(x)=x_2\log (1+e^{x_1/x_2})$, for $x_2>0$. 
Indeed, put $x_2=1$ and consider 
\[
\frac{d}{dp} (px_1+h(p))=0.
\]
Solving for $p$ gives $p= e^{x_1}/(1+e^{x_1})$, so that $g(x_1,1)=\log (1+e^{x_1})$. 
As $g$ is positively homogeneous,  $g(x)=x_2\log (1+e^{x_1/x_2})$ for $x_2>0$. 

Now define $f\colon \mathbb{R}^2\to\mathbb{R}^2$ by 
\[
f_1(x)=\left\{\begin{array}{cl}
                   \max\{0, x_1\} & \mbox{ if } x_2\leq 0\\
                   x_2\log (1+e^{x_1/x_2}) & \mbox{ if }x_2>0
                   \end{array}\right.
\]
and $f_2(x)=x_2$ for all $x=(x_1,x_2)\in\mathbb{R}^2$. 
If $x_2>0$, we get that $f^k_1(x)=x_2\log(k+e^{x_1/x_2})\to\infty$, as $k\to\infty$. 
Thus, not all orbits of $f'_0=f$ are bounded from above. But $\partial f(0)$ consists 
of matrices of the form
\[
\left(\begin{matrix}
p & s\\
0 & 1
\end{matrix}\right), \mbox{\quad where }0\leq s\leq h(p)\mbox{ and } 0\leq p\leq 1. 
\] 
As $h(1)=0$, all these matrices are stable. 

\paragraph{Example 3.} To see that $(vii)$ does not imply $(vi)$ we consider the map 
$f\colon\mathbb{R}\to\mathbb{R}$ given by $f(x)=\max\{0,2x\}$ for $x\in\mathbb{R}$. 
Then $f'_0=f$, so every orbit of $f'_0$ is bounded from below, but $2\in\partial f(0)$.  

\paragraph{Example 4.} To prove that $(viii)$ does not imply $(vii)$ consider $f\colon\mathbb{R}^2\to\mathbb{R}^2$ given by 
\[
f\left(\begin{matrix} x_1\\x_2\end{matrix}\right) = \left(\begin{matrix} 
e^{x_1}+e^{x_2} - 2\\ x_2\end{matrix}\right) \mbox{\quad for } 
x=(x_1,x_2)\in\mathbb{R}^2.
\]
Then 
\[
f'_0\left(\begin{matrix} x_1\\ x_2\end{matrix}\right) =\left(\begin{matrix} 1 & 1 \\ 0 & 1 \end{matrix}\right)\left(\begin{matrix} x_1\\x_2\end{matrix}\right)\mbox{\quad for all } 
x=(x_1,x_2)\in\mathbb{R}^2,
\]
so that $\mathcal{O}(-u;f'_0)$ is unbounded from below for $u\gg 0$. 
But clearly every orbit of $f$ is bounded from below. 

We use the following notion of stability which is weaker than ordinary Lyapunov stability according to Proposition \ref{prop:3.1}. 
\begin{definition}\label{def:3.2} 
Let $f\colon \mathcal{D}\to\mathcal{D}$ be a convex monotone map and  $v\in\mathcal{D}$ be a fixed point of $f$. We say that $v$ is a \emph{tangentially stable}, or,  
\emph{t-stable}, fixed point if $f'_v$ has all its orbits bounded from above. Similarly, we call a periodic point $\xi\in\mathcal{D}$ with period $p$  \emph{tangentially stable} if it is a t-stable fixed point of $f^p$. 
\end{definition}
We note that if $v$ is a t-stable periodic point of $f$ with period $p$, then $f^m(v)$ is also  t-stable for each $0<m<p$. Indeed, as 
\[
(f^p)'_v =(f^{p-m})'_{f^m(v)}\circ (f^m)'_v\mbox{\quad and\quad }(f^p)'_{f^m(v)} =(f^m)'_v\circ (f^{p-m})'_{f^m(v)},
\] 
we get that 
\[
((f^p)'_{f^m(v)})^k = ((f^m)'_v\circ (f^{p-m})'_{f^m(v)})^k = (f^m)'_v\circ ((f^{p})'_v)^{k-1}\circ (f^{p-m})'_{f^m(v)}.
\]
Thus, every point in the orbit of a t-stable periodic point is also t-stable. 
We denote by $\mathcal{E}(f)$ the set of all fixed points of $f$ and we let 
\[
\mathcal{E}_t(f)=\{v\in\mathcal{E}(f)\colon v\mbox{ is t-stable}\}. 
\]
The subdifferential of a t-stable fixed point consists of nonnegative stable matrices by Proposition \ref{prop:3.1}. In the next section we collect  some results concerning stable matrices that will be useful in the analysis. 
 
\section{ Stable nonnegative matrices} 
To an $n\times n$  nonnegative matrix $P=(p_{ij})$ we associate a directed graph 
$\mathcal{G}(P)$ on $n$
nodes, in the usual way, by letting an arrow go from node $i$ to $j$ if $p_{ij}>0$.
We say that a node $i$ has \emph{access} to a node $j$ if there is a (directed) path in 
$\mathcal{G}(P)$ from $i$ to $j$. 
Using the notion of access one defines an equivalence relation 
$\sim$ on $\{1,\ldots,n\}$ by $i\sim j$ if $i$ has access to $j$ and vice versa. 
The equivalence classes are called \emph{classes of }$P$.
A nonnegative matrix $P$ is called $\emph{irreducible}$ if it has only one class. 
Otherwise it is said to be \emph{reducible}. 

The \emph{spectral radius} of $P$ is given by $\rho(P)=\max\{|\lambda|\colon \lambda\mbox{ eigenvalue of } P\}$. 
If $P$ is a stable nonnegative matrix, we call a class $C$ of $P$ \emph{critical} 
if $\rho(P_{CC})=1$. Recall that in Perron-Frobenius theory 
a class $C$ of $P$ is called \emph{basic} if $\rho(P_{CC})=\rho(P)$. 
Thus, all critical classes of a stable nonnegative matrix are basic by
Proposition \ref{prop:4.1}.
The following proposition is a direct consequence of \cite[Theorem 3]{Roth} (see also 
\cite[Corollary 3.4]{Schnei}) and Proposition \ref{prop:4.1}. 
\begin{proposition}\label{prop:4.2}
If $P$ is a nonnegative stable matrix and $C$ and $C'$ are two distinct critical 
classes of $P$, then no $i\in C$ has access to any $j\in C'$.  
\end{proposition}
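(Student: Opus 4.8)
The plan is to reduce the statement to the Rothblum index theorem combined with the semi-simplicity of the eigenvalue $1$ that stability provides. First I would pin down the spectral radius: since $C$ is a critical class, $\rho(P_{CC})=1$, and because the spectral radius of a principal submatrix of a nonnegative matrix cannot exceed that of the whole matrix, $\rho(P)\geq 1$; on the other hand, stability of $P$ together with Proposition~\ref{prop:4.1}(iii) gives $\rho(P)\leq 1$. Hence $\rho(P)=1$, and, as already noted in the text, both $C$ and $C'$ are basic classes of $P$.

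Next I would invoke the combinatorial description of the index of the Perron root. By \cite[Theorem~3]{Roth} (equivalently \cite[Corollary~3.4]{Schnei}), the index of $\rho(P)$ as an eigenvalue of $P$ — that is, the order of the largest Jordan block associated with $\rho(P)$ — equals the largest number $\nu$ of basic classes that can be arranged in a single chain for the access preorder; i.e.\ the largest $\nu$ for which there are basic classes $C_1,\ldots,C_\nu$ such that $C_i$ has access to $C_{i+1}$ for $1\leq i<\nu$.

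Now suppose, for contradiction, that some $i\in C$ has access to some $j\in C'$ with $C\neq C'$. Then $C$ has access to $C'$, so $C,C'$ form a chain of two distinct basic classes, whence the index of $\rho(P)=1$ is at least $2$. But $P$ is stable, so by Proposition~\ref{prop:4.1}(iii) the eigenvalue $1$ is semi-simple, i.e.\ its index equals $1$ — a contradiction. Therefore no $i\in C$ has access to any $j\in C'$.

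The only nontrivial ingredient is the Rothblum index theorem; once that is available the argument is immediate. Accordingly, the ``main obstacle'' is merely bookkeeping: making sure the hypotheses (nonnegativity of $P$, the identification $\rho(P)=1$, and the passage from stability to semi-simplicity of the eigenvalue $1$ via Proposition~\ref{prop:4.1}) are aligned with the precise statement of that theorem in \cite{Roth,Schnei}. No further estimates or constructions are needed.
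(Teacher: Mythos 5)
Your argument is correct and matches the paper's approach exactly: the paper states Proposition~\ref{prop:4.2} as ``a direct consequence'' of \cite[Theorem~3]{Roth} (equivalently \cite[Corollary~3.4]{Schnei}) and Proposition~\ref{prop:4.1}, and your proof simply spells out that consequence — using Proposition~\ref{prop:4.1}(iii) to get $\rho(P)=1$ with a semi-simple (index-one) Perron root, and then Rothblum's characterization of the index as the longest chain of basic classes to rule out any access between distinct critical classes.
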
 
Moreover, we have the following general fact concerning nonnegative matrices (cf.\ 
\cite[Chapter XIII \S3.4]{Gant}. 
\begin{proposition}\label{prop:4.2bis}
If $P$ is a nonnegative matrix, then $\rho(P')\leq \rho(P)$ for all principal
submatrices $P'$ of $P$. 
If $P$ is reducible, then the equality holds for at least one principal submatrix $P'$ of $P$ with 
$P'\neq P$.
\end{proposition}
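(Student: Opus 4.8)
The statement has two parts. For the first part, $\rho(P')\le\rho(P)$ whenever $P'$ is a principal submatrix of $P$: the plan is to use the classical Perron--Frobenius characterization of the spectral radius of a nonnegative matrix via Collatz--Wielandt, or more directly via Gelfand's formula. Writing $P'=P_{II}$ for some $I\subseteq\{1,\dots,n\}$, one observes that the entries of $(P_{II})^k$ are dominated entrywise by the corresponding entries of $(P^k)_{II}$, because every directed path in $\mathcal{G}(P_{II})$ of length $k$ from $i$ to $j$ (with $i,j\in I$) is also a path in $\mathcal{G}(P)$, and all contributions are nonnegative; hence $0\le (P_{II})^k\le (P^k)_{II}$ entrywise. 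Taking any monotone matrix norm (e.g. the max-entry norm or the $\ell_\infty$ operator norm restricted to the block) gives $\|(P')^k\|\le\|P^k\|$, and Gelfand's formula $\rho(M)=\lim_k\|M^k\|^{1/k}$ then yields $\rho(P')\le\rho(P)$. Alternatively, pad $P'$ with zero rows and columns to an $n\times n$ matrix $\tilde P\le P$; by monotonicity of the spectral radius on nonnegative matrices ($0\le A\le B\Rightarrow\rho(A)\le\rho(B)$, itself a standard Perron--Frobenius fact) one gets $\rho(P')=\rho(\tilde P)\le\rho(P)$.

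For the second part, suppose $P$ is reducible. Then $\{1,\dots,n\}$ has at least two classes, so there is a class $C$ that is maximal in the access order, meaning no $i\in C$ has access to any $j\notin C$. Consider the complementary principal submatrix $P'=P_{II}$ with $I=\{1,\dots,n\}\setminus C$; since $P$ is reducible, $C\neq\{1,\dots,n\}$, so $I\neq\emptyset$ and $P'\neq P$. After simultaneously permuting rows and columns we may assume $C$ corresponds to the last block of indices, so that $P$ has block-triangular form
\[
P\sim\begin{pmatrix} P' & * \\ 0 & P_{CC}\end{pmatrix},
\]
because no index in $C$ accesses anything in $I$ forces the lower-left block to vanish. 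The spectrum of a block-triangular matrix is the union of the spectra of its diagonal blocks, hence $\rho(P)=\max\{\rho(P'),\rho(P_{CC})\}$. To conclude $\rho(P')=\rho(P)$ it suffices to show $\rho(P')\ge\rho(P_{CC})$. For this I would pick a \emph{basic} class $B$ of $P$ with $\rho(P_{BB})=\rho(P)$; if $B\neq C$ then $B\subseteq I$, so $P_{BB}$ is a principal submatrix of $P'$ and the first part gives $\rho(P')\ge\rho(P_{BB})=\rho(P)$, done. If the only basic class were $C$ itself, I would instead choose $C$ more carefully at the outset: among all classes, select a basic class $B$ with $\rho(P_{BB})=\rho(P)$, and then extend $B$ to a maximal set $C\supseteq B$ that is a union of classes closed under access — but this needs care. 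The cleanest fix is: let $C$ be any class that is \emph{minimal} in the access order (a class with access only to itself and nothing else, i.e. a ``final'' class in the sense that nothing it reaches lies outside $C$); such a class exists since the access order on classes is a finite partial order. Then the block form above holds with this $C$ as the last block. If $\rho(P_{CC})<\rho(P)$ we are immediately done since then $\rho(P')=\rho(P)$. If $\rho(P_{CC})=\rho(P)$, we remove this $C$ and repeat the argument on $P'$, which is still nonnegative and, having strictly fewer indices, either equals one of its proper principal submatrices' spectral radius by induction on $n$, or is irreducible — and if $P'$ is irreducible with $\rho(P')=\rho(P)$, then taking $C'$ to be a minimal class of $P$ strictly inside the original block structure but distinct from $C$...

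The main obstacle, and the step to handle with genuine care, is exactly this: ensuring that the proper principal submatrix we exhibit actually attains $\rho(P)$, rather than merely being bounded by it. The robust way to organize it is by induction on $n$ together with the block-triangular decomposition: if $P$ is reducible, write it (up to permutation) as $\begin{pmatrix} A & * \\ 0 & B\end{pmatrix}$ with $A,B$ square and nonempty, so $\rho(P)=\max\{\rho(A),\rho(B)\}$; whichever of $A$, $B$ attains the maximum is a proper principal submatrix $P'$ with $\rho(P')=\rho(P)$, and we are done in one stroke without any induction at all. I would present the argument in this last form, citing only the block-triangular spectrum fact and the entrywise domination used in part one; the delicate bookkeeping about access orders can then be dispensed with entirely.
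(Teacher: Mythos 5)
Your final argument is correct and is exactly the classical one: reducibility gives a permutation-similar block upper-triangular form $\begin{pmatrix} A & * \\ 0 & B\end{pmatrix}$ with nonempty square blocks, the spectrum is the union of the spectra of $A$ and $B$, and whichever block attains the maximum is a proper principal submatrix realizing $\rho(P)$; together with the monotonicity $0\le A\le B\Rightarrow\rho(A)\le\rho(B)$ for the first part, this is precisely the argument the paper outsources to \cite[Ch.~XIII \S3.4]{Gant} rather than proving. The lengthy detour through maximal/final classes in the middle of your write-up is unnecessary (and, as you noticed, leads nowhere when $C$ is the unique basic class) --- you should simply present the one-stroke block-triangular version you end with and drop the rest.
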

Using these proposition we now prove the following useful normal form for stable nonnegative matrices.
\begin{proposition}\label{prop:4.3}
If $P$ is a nonnegative stable $n\times n$ matrix, then there exist a permutation 
matrix $\Pi$ and a unique partition of $\{1,\ldots,n\}$ into disjoint sets 
$U$, $C$, $D$, and $I$ such that
\begin{enumerate}[(i)]
\item
\begin{equation}\label{eq:4.1}
\Pi^T P \Pi  = \left ( \begin{array}{cccc}
                         P_{UU} & P_{UC} & P_{UD} & P_{UI}\\
                          0     & P_{CC} & P_{CD} & 0 \\
                          0     & 0      & P_{DD} & 0 \\
                          0     & 0      & P_{ID} & P_{II}
                        \end{array} \right ),
\end{equation}
where $C$ is the disjoint union of the critical classes $C_1,\ldots,C_r$ of $P$,
\item $P_{CC}$ is block-diagonal, with blocks $P_{C_iC_i}$ for $1\leq i\leq r$,
\item every $i\in U$ has access to some $j\in C$, and for every $i\in D$ there
exists $j\in C$ that has access to $i$,
\item $I=\{1,\ldots,n\}\setminus(U\cup C\cup D)$.
\end{enumerate}
Moreover, in that case, we that $\rho(P_{UU})<1$, $\rho(P_{DD})<1$, and $\rho(P_{II})<1$.
\end{proposition}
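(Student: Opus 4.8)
The plan is to read the partition off the combinatorial structure of the graph $\mathcal{G}(P)$, then verify the block form and the spectral bounds, and finally establish uniqueness. Since $P$ is stable, $\rho(P)\le 1$ by Proposition~\ref{prop:4.1}, and Proposition~\ref{prop:4.2bis} gives $\rho(P_{KK})\le\rho(P)\le 1$ for every class $K$ of $P$; hence the critical classes $C_1,\dots,C_r$ of $P$ are precisely the classes $K$ with $\rho(P_{KK})=1$, while every other class satisfies $\rho(P_{KK})<1$. I would set $C=C_1\cup\cdots\cup C_r$ and define
\[
U=\{i\notin C\colon i\text{ has access to some }j\in C\},\qquad
D=\{i\notin C\colon \text{some }j\in C\text{ has access to }i\},
\]
and $I=\{1,\dots,n\}\setminus(U\cup C\cup D)$.

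First I would check that $U$, $C$, $D$, $I$ are pairwise disjoint; the only nontrivial point is $U\cap D=\emptyset$, which follows from Proposition~\ref{prop:4.2}: if $i\in U\cap D$ then some critical class has access to $i$ and $i$ has access to some critical class, so these two critical classes have mutual access, hence coincide, and then $i$ is equivalent to the nodes of that class, i.e.\ $i\in C$, a contradiction. Let $\Pi$ be the permutation matrix listing the indices in the order $U$, then $C_1,\dots,C_r$, then $D$, then $I$. Property~(ii) is immediate (an edge from $C_i$ to $C_j$ would make $C_i$ have access to $C_j$, forcing $i=j$ by Proposition~\ref{prop:4.2}), and property~(iii) holds by construction. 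To get property~(i) it remains to show that the seven off-diagonal blocks $P_{CU}$, $P_{DU}$, $P_{IU}$, $P_{DC}$, $P_{IC}$, $P_{DI}$, $P_{CI}$ all vanish, and each is a one-line access argument: for example $P_{CU}=0$ since an entry $p_{ij}>0$ with $i\in C$ and $j\in U$ would give the critical class of $i$ access to the critical class reached from $j$, so these two classes coincide and $j$ lies in it, contradicting $j\in U$; and $P_{IU}=0$ since an edge from $i\in I$ to $j\in U$ would give $i$ access to $C$ and hence $i\in U$. The other five are entirely similar.

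For the spectral estimates, the key observation is that each class $K$ of $P$ is contained in exactly one of $U$, $C$, $D$, $I$ (a short case analysis according to whether $K$ meets $C$, has access to $C$, or is accessible from $C$, using that accessibility is symmetric and transitive within a class), and that the paths realizing the mutual access inside $K$ stay in $K$; therefore the classes of the principal submatrix $P_{UU}$ are exactly the classes of $P$ contained in $U$, and likewise for $P_{DD}$ and $P_{II}$, and none of these is critical since all critical classes lie in $C$. Since in the Frobenius normal form of $P_{UU}$ the eigenvalues are the union of those of the irreducible diagonal blocks $P_{KK}$, and each such $K$ satisfies $\rho(P_{KK})<1$, we conclude $\rho(P_{UU})<1$; the same reasoning gives $\rho(P_{DD})<1$ and $\rho(P_{II})<1$.

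Finally, for uniqueness I would argue that any partition satisfying (i)--(iv) coincides with the one above: property~(i) forces $C$ to be the union of the critical classes of $P$, and the block shape in~(i) shows that every edge out of $D$ stays in $D$, every edge out of $I$ stays in $D\cup I$, and every edge into $U$ comes from $U$; combined with~(iii) this pins down $U$ as the set of nodes outside $C$ with access to $C$ and $D$ as the set of nodes outside $C$ accessible from $C$, after which $I$ is determined as the complement. The main obstacle is not conceptual but organizational: carrying out the seven block-vanishing arguments without slips, and correctly matching the classes of $P_{UU}$, $P_{DD}$, $P_{II}$ with classes of $P$ so that the Perron--Frobenius spectral bounds apply cleanly.
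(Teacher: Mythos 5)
Your proposal follows essentially the same route as the paper's proof: you define $C$, $U$, $D$, $I$ exactly as the paper does, use Proposition~\ref{prop:4.2} for the disjointness $U\cap D=\emptyset$ and the vanishing of the off-diagonal blocks, and then rule out $\rho(P_{UU})=1$ (and likewise for $D$, $I$) by locating a critical class inside $U$, which is a contradiction. The paper gets the spectral bounds somewhat more tersely by invoking Proposition~\ref{prop:4.2bis} to produce an irreducible principal submatrix $P_{U^*U^*}$ of full spectral radius, whereas you unpack the same idea by observing that every class of $P$ lies wholly in one of $U$, $C$, $D$, $I$ and that the classes of $P_{UU}$ are exactly the classes of $P$ contained in $U$; both routes are correct and of comparable length, and your version has the small advantage of making explicit why the submatrix one finds is actually a class of $P$. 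You also supply a uniqueness argument (reading $C$ off from (i), then pinning down $U$ and $D$ from the block structure plus (iii), then $I$ as the complement), which the paper asserts but does not spell out; your argument is correct.
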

\begin{proof} 
Let $C$ be the union of the critical classes $C_1,\ldots,C_r$ of $P$. 
Then $P_{CC}$ is block diagonal by Proposition \ref{prop:4.2}. 
Let $U$ be the set of nodes of $\mathcal{G}(P)$ not in $C$ that have access to some $i\in C$. 
In addition, let $D$ be the set of nodes $i$ in $\mathcal{G}(P)$ that are not in $C$, but from which there exists $j\in C$ such that $j$ has access to $i$. 
We remark that $U\cap D=\emptyset$. Indeed, if $i\in U\cap D$, then there exist $j_1,j_2\in C$ such that $i$ has access to $j_1$ and $j_2$ has access to $i$, By Proposition \ref{prop:4.2}, $j_1$ and $j_2$ are both in a single class, say $C_m$. 
But this implies that $i\in C_m$, which is a contradiction. 
In fact, the same argument shows that $P_{DU}=0$, $P_{CU}=0$, and $P_{DC}=0$. 

Now let $I=\{1,\ldots,n\}\setminus (U\cup C\cup D)$. By definition no node in $I$ has 
access to a node $C$, nor can it be accessed by a node in $C$. Hence $P_{IC}=0$ 
and $P_{CI}=0$. Furthermore, the definition of $U$ and $D$ implies that  $P_{IU}=0$ 
and $P_{DI}=0$. Thus, the sets $U$, $C$, $D$, and $I$, communicate as in Figure 
\ref{fig:4.1} and hence there exists a permutation matrix $\Pi$ such that 
$\Pi^T P\Pi$ satisfies (\ref{eq:4.1}). 
\begin{figure}
\[
\input{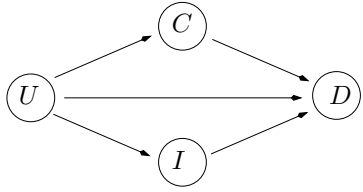}
\]
\caption{Partition associated with a stable matrix}\label{fig:4.1}
\end{figure}

To prove the final assertion, we remark that if $\rho(P_{UU})=1$, then there exists 
a critical class $U^*\subseteq U$ such that $\rho(P_{U^*U^*})=\rho(P_{UU})=1$ and $P_{U^*U^*}$ is irreducible by Proposition \ref{prop:4.2bis}, which is a contradiction. In exactly the same way it can be shown that $\rho(P_{DD})<1$ and $\rho(P_{II})<1$. 
\end{proof} 
The set of nodes $U$, $C$, $D$, and $I$ in $\mathcal{G}(P)$ are respectively called 
\emph{upstream nodes}, \emph{critical nodes}, \emph{downstream nodes}, and 
\emph{independent nodes}. 
By using the normal form and the Perron-Frobenius theorem we now prove the 
following assertion. 
\begin{proposition}\label{prop:4.4}
If $P$ is nonnegative stable $n\times n$ matrix and $Pz\leq z$, then 
\begin{enumerate}[(i)]
\item $P_{CC}z_C=z_C$, 
\item $z_D=0$ and $(Pz)_{C\cup D}=z_{C\cup D}$,
\item $z_I\geq 0$, 
\item if, in addition, $z_S\geq 0$ for some $S\subseteq\{1,\ldots,n\}$ that contains at 
least one element of each critical class of $P$, then $z\geq 0$.
\end{enumerate} 
\end{proposition}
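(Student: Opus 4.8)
\medskip

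The plan is to reduce everything to the normal form of Proposition~\ref{prop:4.3} and then read off the four assertions block by block. Replacing $P$ by $\Pi^TP\Pi$ and $z$ by $\Pi^Tz$ changes neither the hypothesis $Pz\leq z$ nor the conclusions (the partition $U,C,D,I$ and the critical classes are defined purely in terms of $\mathcal{G}(P)$), so I may assume that $P$ already has the form (\ref{eq:4.1}). The elementary tool used throughout is the following: if $A\geq 0$ is a square matrix with $\rho(A)<1$ and $Aw\leq w$, then $w\geq 0$; indeed $(I-A)^{-1}=\sum_{k\geq 0}A^k\geq 0$ and $w=(I-A)^{-1}(w-Aw)$.

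First I would handle the ``easy'' blocks. The $D$-row of $Pz\leq z$ reads $P_{DD}z_D\leq z_D$, so $z_D\geq 0$ since $\rho(P_{DD})<1$. The $I$-row reads $P_{ID}z_D+P_{II}z_I\leq z_I$; using $P_{ID}z_D\geq 0$ this gives $P_{II}z_I\leq z_I$, whence $z_I\geq 0$ since $\rho(P_{II})<1$, which is~(iii). For~(i), the $C_i$-row reads $P_{C_iC_i}z_{C_i}+P_{C_iD}z_D\leq z_{C_i}$, and since $P_{C_iD}z_D\geq 0$ we get $P_{C_iC_i}z_{C_i}\leq z_{C_i}$; as $P_{C_iC_i}$ is irreducible with $\rho(P_{C_iC_i})=1$, Perron-Frobenius provides a strictly positive left eigenvector $u$ with $uP_{C_iC_i}=u$, and then $u\,(z_{C_i}-P_{C_iC_i}z_{C_i})=0$ together with $z_{C_i}-P_{C_iC_i}z_{C_i}\geq 0$ forces $P_{C_iC_i}z_{C_i}=z_{C_i}$. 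Running this over all critical classes gives $P_{CC}z_C=z_C$.

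For~(ii), substituting~(i) into the $C_i$-row gives $P_{C_iD}z_D\leq 0$, hence $P_{CD}z_D=0$ (all entries of $P_{CD}$ and of $z_D$ being nonnegative). To deduce $z_D=0$ I would use the accessibility property in Proposition~\ref{prop:4.3}(iii). Put $S=\{j\in D:(z_D)_j>0\}$. Writing $P_{CD}z_D=0$ and $P_{DD}z_D\leq z_D$ coordinatewise, and using that all summands are nonnegative, one sees that no node of $C$ and no node of $D\setminus S$ has an arrow into $S$; since arrows leaving $C$ land in $C\cup D$ and arrows leaving $D$ land in $D$, any path from $C$ to a node of $S\subseteq D$ stays inside $C\cup D$ and must therefore contain an arrow from $(C\cup D)\setminus S$ into $S$ --- impossible unless $S=\emptyset$. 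Hence $z_D=0$, and then $(Pz)_D=P_{DD}z_D=0=z_D$ and $(Pz)_C=P_{CC}z_C+P_{CD}z_D=z_C$, giving~(ii).

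Finally, for~(iv): $z_D=0$ and $z_I\geq 0$ are in hand. Since $P_{C_iC_i}$ is irreducible with spectral radius $1$, Perron-Frobenius gives that $\ker(I-P_{C_iC_i})$ is the line spanned by a strictly positive vector $v^{(i)}$, so by~(i) we have $z_{C_i}=\alpha_i v^{(i)}$ for some scalar $\alpha_i$; choosing $k\in S\cap C_i$ (nonempty by hypothesis) yields $\alpha_i v^{(i)}_k=z_k\geq 0$, so $\alpha_i\geq 0$ and $z_{C_i}\geq 0$, whence $z_C\geq 0$. Then the $U$-row gives $P_{UU}z_U\leq z_U-P_{UC}z_C-P_{UD}z_D-P_{UI}z_I\leq z_U$ (using $z_C,z_I\geq 0$ and $z_D=0$), so $z_U\geq 0$ since $\rho(P_{UU})<1$, and therefore $z\geq 0$. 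I expect the graph-theoretic argument for $z_D=0$ in~(ii) to be the only genuinely delicate step; the remainder is linear algebra together with the normal form and the Perron-Frobenius theorem.
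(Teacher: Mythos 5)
Your proof is correct and takes essentially the same approach as the paper: reduce to the normal form of Proposition~\ref{prop:4.3}, use $\rho<1$ on the $D$, $I$, $U$ blocks, a positive left Perron eigenvector on each critical block for (i), and accessibility from $C$ into $D$ for $z_D=0$. The only cosmetic difference is in (ii), where you phrase the contradiction via the first arrow entering $S=\{j\in D:z_j>0\}$ rather than by tracing a product of entries along an explicit path from $C$ to $D$ as the paper does; the underlying idea is identical.
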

\begin{proof}
As $Pz\leq z$, it follows from Proposition \ref{prop:4.3} that $P_{DD}z_D\leq z_D$. 
Since $\rho(P_{DD})<1$, we get that $(P_{DD})^kz_D\to 0$ as $k\to \infty$, and hence 
$z_D\geq 0$. This implies that 
\begin{equation}\label{eq:4.2} 
z_C\geq P_{CC}z_C+P_{CD}z_D\geq P_{CC}z_C.
\end{equation}
Let $C_1,\ldots,C_r$ be the critical classes of $P$. 
As $P_{C_iC_i}$ is nonnegative and irreducible, it follows from the Perron-Frobenius 
theorem \cite[Theorem ??]{Gant} that there exists for each $1\leq i\leq r$ a positive 
$m^i\in\mathbb{R}^{C_i}$ such that $m^iP_{C_iC_i}=m^i$. 
Put $m=(m_1,\ldots,m_r)\in\mathbb{R}^C$  and remark that, as $P_{CC}$ is block diagonal that $mP_{CC}=m$.
Multiplication by $m$ from the left in (\ref{eq:4.2}) gives
\[
mz_C\geq mP_{CC}z_C+mP_{CD}z_D\geq mP_{CC}z_C=mz_C.
\]
As $m$ is positive, we deduce that 
\begin{equation}\label{eq:4.3}
P_{CD}z_D=0\mbox{\quad and \quad}P_{CC}z_C=z_C,
\end{equation}
which proves $(i)$. 

Recall that $z_D\geq 0$. To show $(ii)$ we assume by way of contradiction that $z_j>0$ for some $j\in D$. By definition there exists a path $(i_1,\ldots,i_q)$ in 
$\mathcal{G}(P)$ with $i=i_1\in C$, $i_2,\ldots,i_q\in D$, and $j=i_q$. 
Since $Pz\leq z$ we get that 
\[
(P_{CD}z_D)_i= P_{iD}z_D\geq P_{i_1i_2}z_{i_2}\geq P_{i_1i_2}P_{i_2i_3}z_{i_3}
\geq\ldots\geq \big{(}\prod_{k=1}^{q-1} P_{i_ki_{k-1}}\big{)}z_{i_q}>0,
\] 
which contradicts (\ref{eq:4.3}) and hence $z_D=0$.
By Proposition \ref{prop:4.3}$(i)$ we also find that  $(Pz)_{C\cup D}=z_{C\cup D}$.

To prove that $z_I\geq 0$, we remark that $z_I\geq P_{ID}z_D+P_{II}z_I=P_{II}z_I$ 
by $(ii)$. 
As $\rho(P_{II})<1$, we get that $z_I\geq P^k_{II}z_I\to 0$ as $k\to \infty$, so that 
$z_I\geq 0$. 

Finally assume that $S\subseteq\{1,\ldots,n\}$ contains at least one element in each critical class of $P$ and $z_S\gg 0$. Remark that $z_U\geq P_{UU}z_U+P_{UC}z_C+P_{UI}z_I\geq P_{UU}z_U+P_{UC}z_C$, since $z_I\geq 0$. 
As $\rho(P_{UU})<1$, we get that $(I-P_{UU})^{-1}=\sum_{k\geq 0} P_{UU}^k$ is nonnegative and 
\begin{equation}\label{eq:4.4} 
z_U\geq (I-P_{UU})^{-1}P_{UC}z_C.
\end{equation}
As each $P_{C_iC_i}$ is irreducible and $\rho(P_{C_iC_i})=1$, it follows from the 
Perron-Frobenius theorem that $z_{C_i}$ is a multiple of the unique positive eigenvector of $P_{C_iC_i}$. 
By assumption $z_{C_i}$ has at least one positive coordinate, and hence 
$z_C\geq 0$. 
It now follows from (\ref{eq:4.4}) that $z_U\geq 0$ and therefore $z\geq 0$.
\end{proof}
For $z\in\mathbb{R}^n$ and $S\subseteq\{1,\ldots,n\}$ we simply say that $z=0$ on 
$S$, or, $z$ is zero on $S$, if $z_S=0$. Similar terminology will be used for 
$z_S\geq 0$ and $z_S\gg 0$. 
Given a collection $\mathcal{P}$ of nonnegative $n\times n$ matrices  we define 
\begin{equation}\label{eq:4.5} 
\mathcal{G}(\mathcal{P})=\bigcup_{P\in\mathcal{P}} \mathcal{G}(P),
\end{equation}
and we observe that the following lemma holds. 
\begin{lemma}\label{lem:4.4bis}
If $\mathcal{P}$ is a convex set of nonnegative $n\times n$ matrices, then there 
exists $M\in\mathcal{P}$ such that $\mathcal{G}(M)=\mathcal{G}(\mathcal{P})$.  
\end{lemma}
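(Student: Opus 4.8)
The plan is to exhibit $M$ as a suitable finite convex combination of members of $\mathcal{P}$. First observe that one inclusion comes for free: for every $M\in\mathcal{P}$ we have $\mathcal{G}(M)\subseteq\mathcal{G}(\mathcal{P})$ directly from the definition in (\ref{eq:4.5}). So the whole task is to produce a single $M\in\mathcal{P}$ whose graph contains every arrow of $\mathcal{G}(\mathcal{P})$.

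The key observation is that $\mathcal{G}(\mathcal{P})$ has only finitely many arrows, since its arrow set $E$ is contained in $\{1,\ldots,n\}\times\{1,\ldots,n\}$. For each arrow $(i,j)\in E$, by the definition of $\mathcal{G}(\mathcal{P})$ there is a matrix $P^{(i,j)}\in\mathcal{P}$ with $(P^{(i,j)})_{ij}>0$. I would then take
\[
M=\frac{1}{|E|}\sum_{(i,j)\in E}P^{(i,j)}
\]
(and simply let $M$ be any element of $\mathcal{P}$ in the degenerate case $E=\emptyset$). Because $E$ is finite and $\mathcal{P}$ is convex, $M\in\mathcal{P}$.

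It then remains to check $\mathcal{G}(\mathcal{P})\subseteq\mathcal{G}(M)$. Fix $(i,j)\in E$. Since every matrix in $\mathcal{P}$ is nonnegative, all summands defining $M_{ij}$ are $\geq 0$, whence
\[
M_{ij}\geq \frac{1}{|E|}(P^{(i,j)})_{ij}>0,
\]
so that $(i,j)$ is an arrow of $\mathcal{G}(M)$. Combining this with the trivial reverse inclusion yields $\mathcal{G}(M)=\mathcal{G}(\mathcal{P})$.

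There is essentially no hard step here; the only points requiring care are that the arrow set is finite (so that the convexity of $\mathcal{P}$, which a priori only provides finite convex combinations, indeed applies) and that nonnegativity of the entries prevents cancellation in the sum defining $M_{ij}$. One should also implicitly assume $\mathcal{P}\neq\emptyset$, as is the case for all collections of matrices considered in the paper.
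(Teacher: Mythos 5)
Your proof is correct and follows essentially the same route as the paper: pick finitely many matrices whose graphs jointly cover every arrow of $\mathcal{G}(\mathcal{P})$, and average them, using nonnegativity to rule out cancellation.
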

\begin{proof} 
Since the number of edges in $\mathcal{G}(\mathcal{P})$ is finite there exist 
$\mathcal{F}\subseteq \mathcal{P}$ finite such that $\mathcal{G}(\mathcal{F})=\mathcal{G}(\mathcal{P})$. Define $M=|\mathcal{F}|^{-1}\sum_{Q\in\mathcal{F}} Q$ 
and note that $M\in\mathcal{P}$, as $\mathcal{P}$ is convex. 
Moreover, $\mathcal{G}(M)=\mathcal{G}(\mathcal{F})=\mathcal{G}(\mathcal{P})$. 
\end{proof}
For a stable nonnegative matrix $P$, we let $N^c(P)$ denote the set of critical nodes 
of $\mathcal{G}(P)$ and we let $\mathcal{G}^c(P)$ denote the restriction of $\mathcal{G}(P)$ to $N^c(P)$. For a collection of stable nonnegative $n\times n$ matrices, 
$\mathcal{P}$,  we define 
\begin{equation}\label{eq:4.6} 
N^c(\mathcal{P})=\bigcup_{P\in\mathcal{P}} N^c(P)\mbox{\quad and \quad} 
\mathcal{G}^c(\mathcal{P})=\bigcup_{P\in\mathcal{P}}\mathcal{G}^c(P).
\end{equation} 
Using these concepts we can now present the main theorem of this section. 
\begin{theorem}\label{thm:4.5} 
If $\mathcal{P}$ is a convex rectangular set of stable nonnegative $n\times n$ matrices, then there exists $M\in\mathcal{P}$ such that $\mathcal{G}^c(M)=\mathcal{G}^c(\mathcal{P})$.
\end{theorem}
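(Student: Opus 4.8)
The plan is to reduce to a finite situation and then build $M$ one row at a time, using rectangularity. Since there are only finitely many subgraphs of the complete digraph on $\{1,\dots,n\}$, we may fix a finite subfamily $P^{(1)},\dots,P^{(N)}\in\mathcal{P}$ with $\bigcup_{k}\mathcal{G}^c(P^{(k)})=\mathcal{G}^c(\mathcal{P})$. For \emph{any} $M\in\mathcal{P}$ we trivially have $\mathcal{G}^c(M)\subseteq\mathcal{G}^c(\mathcal{P})$, so the whole point is the reverse inclusion: to produce $M\in\mathcal{P}$ with $\mathcal{G}^c(M)\supseteq\mathcal{G}^c(P^{(k)})$ for every $k$. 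Equivalently, writing each $\mathcal{G}^c(P^{(k)})$ as the disjoint union of the strongly connected graphs $\mathcal{G}(P^{(k)}_{CC})$ over the critical classes $C$ of $P^{(k)}$ (distinct critical classes share no node and, by Proposition~\ref{prop:4.2}, no access), I want one $M\in\mathcal{P}$ whose critical graph contains all of these finitely many ``critical blocks''.

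The basic tool is a \emph{freezing lemma}: if $Q\in\mathcal{P}$ has a critical class $C$ and $R\in\mathcal{P}$ satisfies $R_i=Q_i$ for all $i\in C$, then $C\subseteq N^c(R)$ and $\mathcal{G}(Q_{CC})\subseteq\mathcal{G}^c(R)$. The proof is short: $R_{CC}=Q_{CC}$ is irreducible with $\rho(Q_{CC})=1$, so $C$ is strongly connected in $\mathcal{G}(R)$ and thus lies in a single class $\widehat C\supseteq C$ of $R$; by Proposition~\ref{prop:4.2bis} $\rho(R_{\widehat C\widehat C})\ge\rho(R_{CC})=1$, while $\rho(R_{\widehat C\widehat C})\le\rho(R)\le 1$ since $R\in\mathcal{P}$ is stable, so $\widehat C$ is a critical class of $R$; and every edge $(i,j)$ of $\mathcal{G}(Q_{CC})$ satisfies $R_{ij}=Q_{ij}>0$ with $i,j\in\widehat C$, hence lies in $\mathcal{G}^c(R)$. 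The moral is that an irreducible critical block, once copied verbatim into $R$, can only be absorbed into a possibly larger critical class of $R$, never broken up, because stability caps the spectral radius of every class of $R$ at $1$.

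If the critical classes involved were pairwise disjoint this would finish the proof at once: define $M\in\mathcal{P}$ by letting $M_i$ be the corresponding row of the ambient $P^{(k)}$ whenever $i$ lies in one of these critical classes (well defined by disjointness), and $M_i$ arbitrary in $\mathcal{P}_i$ otherwise; rectangularity gives $M\in\mathcal{P}$, and the freezing lemma applied once for each critical class yields $\mathcal{G}^c(M)\supseteq\mathcal{G}^c(\mathcal{P})$. So the real issue is overlapping critical classes: distinct matrices of $\mathcal{P}$ can have critical classes $C$ and $C'$ sharing a node $i$ while demanding different rows at $i$, and this genuinely occurs even under rectangularity and stability.

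Handling the overlaps is the main obstacle. A node $i\in C\cap C'$ carries only one row of $M$ and cannot equal both ambient rows. Taking $M_i$ to be a convex combination of the two rows is harmless for the \emph{edges} (the support of a positive convex combination is the union of the supports, so all wanted critical edges out of $i$ survive), but it splits the spectral mass between the two blocks and can drop the spectral radius of the merged class below $1$, destroying criticality. My plan to overcome this is to treat each maximal \emph{cluster} of mutually overlapping critical classes as a whole rather than incrementally: one seeks $R\in\mathcal{P}$ whose rows on a cluster are convex combinations of the relevant ambient rows, with the combination weights (and the Perron eigenvectors of the individual blocks, rescaled to agree on the overlaps) chosen so that a single strictly positive vector $w$ supported on the cluster satisfies $(Rw)_i\ge w_i$ for every $i$ in the cluster; the class of $R$ containing the cluster then has spectral radius $\ge 1$ by $w$ and $\le 1$ by stability, hence $=1$, which makes the whole cluster critical in $R$ and, the edges being present, delivers all the wanted critical edges; patching the matrices built for the different (now disjoint) clusters by rectangularity yields $M$. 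Making this eigenvector-matching work simultaneously along a chain of several overlapping critical classes, and checking its compatibility with the upper bound $\rho\le 1$ forced by membership in $\mathcal{P}$, is the step I expect to be the most delicate.
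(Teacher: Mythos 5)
The ``freezing lemma'' is sound, the reduction to a finite family is fine, and you have correctly located the crux of the theorem in the overlapping critical classes. But the proposal ends with a plan rather than a proof, and that plan has a genuine gap. Two critical classes $C_1$ (of $P^{(1)}$) and $C_2$ (of $P^{(2)}$) lying in the same cluster may intersect in \emph{two or more} nodes, and the Perron eigenvectors of $P^{(1)}_{C_1C_1}$ and of $P^{(2)}_{C_2C_2}$ need not have the same component ratios on $C_1\cap C_2$; a single scalar rescaling per block therefore cannot make them agree in general. So ``rescale the individual Perron eigenvectors to agree on the overlaps'' is not a construction one can just perform --- the existence of one positive vector $w$ on the cluster that is simultaneously Perron for every involved critical block is a nontrivial \emph{consequence} of stability that has to be proved, and your outline presupposes it rather than establishing it.

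The paper supplies exactly the missing consistency, but globally rather than by local patching. Fix a strongly connected component $C$ of $\mathcal{G}^c(\mathcal{P})$ (your cluster), set $\mathcal{Q}_k=\{P_k\colon P\in\mathcal{F},\ k\in N^c(P)\}$, and apply the nonlinear (Brouwer) Perron--Frobenius theorem to the monotone positively homogeneous map $g\colon\mathbb{R}^C_+\to\mathbb{R}^C_+$, $g_k(y)=\sup_{q\in\mathcal{Q}_k} q_C y$, to get $u\geq 0$, $u\neq 0$, $g(u)=\lambda u$. Strong connectivity of $C$ in $\mathcal{G}^c(\mathcal{P})$ forces $u\gg 0$; picking rows achieving the sup and invoking rectangularity produces $Q\in\mathcal{P}$ with $Q_{CC}u=\lambda u$, so stability gives $\lambda\leq 1$; and then for every critical class $C'\subseteq C$ of every $P\in\mathcal{F}$, irreducibility of $P_{C'C'}$ with $\rho(P_{C'C'})=1$ combined with $P_{C'C'}u_{C'}\leq\lambda u_{C'}\leq u_{C'}$ and $u_{C'}\gg 0$ forces, by the classical Perron--Frobenius theorem, $\lambda=1$, $P_{C'C'}u_{C'}=u_{C'}$, and $P_{C'C}u=u_{C'}$. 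That is, the nonlinear eigenvector $u$ \emph{automatically} restricts to the Perron eigenvector of every critical block, simultaneously; one does not assemble $u$ from the blocks, one obtains $u$ first and deduces that the blocks must be compatible. After this, no careful weight-tuning is needed: since $q_Cu=u_k$ for every $q\in\mathcal{Q}_k$, the equal-weight row average $M_k=|\mathcal{Q}_k|^{-1}\sum_{q\in\mathcal{Q}_k}q$ already satisfies $M_{CC}u=u$, hence $\rho(M_{CC})=1$. Importing this nonlinear eigenvalue step is what your sketch needs to become a proof.
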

\begin{proof}
The assertion is trivial if $\mathcal{G}^c(\mathcal{P})$ is empty. 
Let $\mathcal{F}$ be a finite set of matrices in $\mathcal{P}$ 
such that $\mathcal{G}^c(\mathcal{F})=\mathcal{G}^c(\mathcal{P})$. 
For each $k\in N^c(\mathcal{P})$ we let 
\[
\mathcal{Q}_k=\{P_k\colon P\in\mathcal{F}\mbox{ and } k\in N^c(P)\}.
\]
For $k\not\in N^c(\mathcal{P})$ we pick an arbitrary $P\in\mathcal{P}$ and put 
$\mathcal{Q}_k=\{P_k\}$. 
Subsequently we define an $n\times n$ nonnegative matrix $M$ by
\[
M_k=|\mathcal{Q}_k|^{-1}\sum_{q\in\mathcal{Q}_k} q\mbox{\quad for }1\leq k\leq n.
\]
As $\mathcal{P}$ is convex and rectangular, $M\in\mathcal{P}$ and hence 
$\mathcal{G}^c(M)\subseteq\mathcal{G}^c(\mathcal{P})$. 

We claim that $\mathcal{G}^c(\mathcal{P})\subseteq\mathcal{G}(M)$ by construction. 
Indeed, if $(i,j)$ is an arc in $\mathcal{G}^c(\mathcal{P})$, then there exists 
$P\in\mathcal{F}$ such that $(i,j)$ is an arc in $\mathcal{G}^c(P)$. 
This implies that $i\in N^c(P)\subseteq N^c(\mathcal{P})$ and $P_i\in\mathcal{Q}_i$. 
As $P_{ij}>0$, we get that 
\[
M_{ij} = |\mathcal{Q}_i|^{-1}\sum_{q\in\mathcal{Q}_i} q_j\geq 
\frac{p_{ij}}{|\mathcal{Q}_i|}>0.
\]
Thus, $\mathcal{G}^c(M)\subseteq\mathcal{G}^c(\mathcal{P})\subseteq\mathcal{G}(M)$, so that 
\[
\mathcal{G}^c(\mathcal{P})_{|N^c(M)}=\mathcal{G}^c(M)
\]
(Here $\mathcal{G}^c(\mathcal{P})_{|N^c(M)}$ denotes the restriction of the graph 
$\mathcal{G}^c(\mathcal{P})$ to the nodes in $N^c(M)$.) 
As $N^c(M)\subseteq N^c(\mathcal{P})$, it remains to prove that 
$N^c(\mathcal{P})\subseteq N^c(M)$ to establish the 
equality $\mathcal{G}^c(M)=\mathcal{G}^c(\mathcal{P})$. 
To show the inclusion we use the following claim.\\
\emph{Claim.} If $C$ is the set of nodes of a strongly connected component of 
$\mathcal{G}^c(\mathcal{P})$, then $\rho(M_{CC})=1$. 

If we assume the claim for the moment and take $i\in N^c(\mathcal{P})$, then there 
exists a strongly connected component $C$ in $\mathcal{G}^c(\mathcal{P})$ such that 
$i\in C$. 
Clearly there exists a class $C^*$ of $M$ such that $C\subseteq C^*$ and hence 
$1=\rho(M_{CC})\leq \rho(M_{C^*C^*})$ by Proposition \ref{prop:4.2bis} and the 
claim. 
This implies that $C^*$ is a critical class of $M$ and therefore $i\in N^c(M)$. 

To prove the claim we consider a nonlinear map 
$g\colon\mathbb{R}^C_+\to\mathbb{R}^C_+$ given by, 
\[
g_k(y)=\sup_{q\in\mathcal{Q}_k} q_Cy\mbox{\quad for $k\in C$ and }
y\in\mathbb{R}^C_+.
\] 
We begin by constructing an eigenvector $u\gg 0$ for $g$. 
As $g$ is monotone, positively homogeneous, and continuous, we can use the 
Brouwer fixed point theorem to find $u\in\mathbb{R}^C_+$, with $u\neq 0$, and $\lambda \geq 0$ such that $g(u)=\lambda u$ (see \cite[pp.152--154]{AH} or 
\cite[p.201]{KR}). 
Since $\mathcal{F}$ is finite, $\mathcal{Q}_k$ is finite, and hence the $\sup$ is attained for $u$ and $k\in C$, say by $q^k\in\mathcal{Q}_k$. 
Now let $Q$ be the $n\times n$ nonnegative matrix with $Q_k=q_k$ for all $k\in C$ and $Q_k$ is some element in $\mathcal{Q}_k$ for all $k\not\in C$. 
As $\mathcal{P}$ is rectangular, $Q\in\mathcal{P}$.    
Moreover, $Q_{CC} u =g(u)=\lambda u$ and $\rho(Q_{CC})\leq \rho(Q)\leq 1$, as 
$Q$ is stable.  
Thus, we find that $\lambda \leq 1$. 

Now note that if $(i_1,i_2)$ is an arrow in $\mathcal{G}^c(\mathcal{P})$, then there 
exists $q\in\mathcal{Q}_{i_1}$ with $q_{i_2}>0$. 
This implies that if $x\in\mathbb{R}^C_+$ and $x_{i_2}>0$, then 
\begin{equation}\label{eq:4.7}
g_{i_1}(x)\geq q_{i_2}x_{i_2}>0. 
\end{equation}
Since $C$ is a strongly connected component of $\mathcal{G}^c(\mathcal{P})$, there 
exists a path from any $i$ to any $j$ in $C$. 
Recall that $u\in\mathbb{R}^C_+$ and $u\neq 0$. 
Hence there exists $j\in C$ such that $u_j>0$. 
Now let $(i_0,i_1,\ldots,i_r)$ be a path in $\mathcal{G}^c(\mathcal{P})_{|C}$ from 
$i=i_0$ to $j=i_r$. 
By (\ref{eq:4.7}), $g_{i_{r-1}}(u)\geq q_{i_r}u_{i_r}>0$, and $g^2_{i_{r-2}}(u)\geq 
q_{i_{r-1}}g_{i_{r}}(u)\geq q_{i_{r-1}}q_{i_r}u_{i_r}>0$. 
By repeating the argument we get that 
\[
u_i= g_i^r(u)=g^r_{i_0}(u)\geq \big{(}\prod_{k=1}^r q_{i_k}\big{)}u_{i_r}>0.
\]
Thus, $u_i>0$ for all $i\in C$. 

Let $k\in C$ and $q\in\mathcal{Q}_k$. 
Then there exists $P\in\mathcal{F}$ such that $q=P_k$ and $k\in N^c(P)$. 
Moreover, there exists a critical class $C'$ of $P$ with $k\in C'\subseteq C$ and 
$\rho(P_{C'C'})=1$. 
We note that 
\begin{equation}\label{eq:4.8}
P_{C'C'}u_{C'}\leq P_{C'C}u\leq g(u)_{C'}=\lambda u_{C'}\leq u_{C'},
\end{equation} 
as $P_l\in \mathcal{Q}_l$ for all $l\in C'$. 
Since $u$ is positive on $C'$, it follows from Proposition \ref{prop:4.4}$(i)$ that 
$P_{C'C'}u_{C'}=u_{C'}$, so that $\lambda =1 $ and $P_{C'C}u=u_{C'}$ 
by (\ref{eq:4.8}). 
Therefore, if $k\in C$ and $q\in\mathcal{Q}_k$, then $q_Cu=P_{kC}u=u_k$, so that 
$M_{kC}u =u_k$ for all $k\in C$. From this we conclude that 
$M_{CC}u=u$ and hence $\rho(M_{CC})=1$, which proves the claim.
\end{proof}

\section{Tangentially stable fixed points} 
By using the results from the previous section we can now start analysing the 
t-stable fixed points of monotone convex maps. 
To begin, we have the following lemma. 
\begin{lemma}\label{lem:5.1}
If $f\colon\mathcal{D}\to\mathcal{D}$ is a convex monotone map and $v$ and $w$ are 
t-stable fixed points of $f$, then $\mathcal{G}^c(\partial f(v))=\mathcal{G}^c(\partial f(w))$.
\end{lemma}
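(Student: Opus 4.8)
The plan is to establish the two inclusions $\mathcal{G}^c(\partial f(v))\subseteq\mathcal{G}^c(\partial f(w))$ and $\mathcal{G}^c(\partial f(w))\subseteq\mathcal{G}^c(\partial f(v))$; by the symmetry of the hypotheses it suffices to prove the first one. We may assume $v\neq w$ and put $z=w-v$. The starting point is that the fixed-point equations couple the two subdifferentials: evaluating (\ref{eq:2.1}) for $P\in\partial f(v)$ at $x=w$ gives $w-v\geq P(w-v)$, that is $Pz\leq z$ for every $P\in\partial f(v)$; symmetrically $Qz\geq z$ for every $Q\in\partial f(w)$. Since $v$ and $w$ are t-stable, Proposition \ref{prop:3.1} shows that $\partial f(v)$ and $\partial f(w)$ consist of stable nonnegative matrices, and by Proposition \ref{prop:2.1} they are convex and rectangular, so Theorem \ref{thm:4.5} applies to each.

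Next I would choose, via Theorem \ref{thm:4.5}, a matrix $M\in\partial f(v)$ with $\mathcal{G}^c(M)=\mathcal{G}^c(\partial f(v))$, write $C=N^c(M)$ for the union of its critical classes $C_1,\dots,C_r$, and let $D$ be its set of downstream nodes as in Proposition \ref{prop:4.3}. Applying Proposition \ref{prop:4.4} to $M$ together with $Mz\leq z$ yields $(Mz)_i=z_i$ for all $i\in C\cup D$. The crucial step is then to transport the rows of $M$ indexed by $C\cup D$ into $\partial f(w)$. For such an $i$ and any $y\in\mathbb{R}^n$, substituting $x=w+\epsilon y$ (admissible for small $\epsilon>0$ since $\mathcal{D}$ is open) into (\ref{eq:2.1}) for the row $M_i$ and using $(Mz)_i=z_i$ gives $f_i(w+\epsilon y)\geq w_i+\epsilon M_iy$, hence, letting $\epsilon\downarrow 0$, $(f'_w(y))_i\geq M_iy$ for all $y$. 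Because $\partial f(w)$ is rectangular with compact convex $i$-th factor and $(f'_w(y))_i=\sup_{Q\in\partial f(w)}Q_iy$ by (\ref{eq:2.3}), comparing support functions forces $M_i$ to lie in the $i$-th factor of the rectangular decomposition of $\partial f(w)$. This is the main obstacle, where the rectangularity of $\partial f(w)$, the t-stability of $w$, and the normal-form analysis of Proposition \ref{prop:4.4} all come into play. Using rectangularity of $\partial f(w)$ once more, I would then assemble a matrix $\widetilde M\in\partial f(w)$ whose rows on $C\cup D$ coincide with those of $M$ and whose remaining rows are arbitrary; since $w$ is t-stable, $\widetilde M$ is stable.

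Finally I would read off the critical structure of $\widetilde M$. By the normal form of $M$ (Proposition \ref{prop:4.3}), the rows of $M$ indexed by $C\cup D$ are supported inside $C\cup D$, with $M_{CC}$ block-diagonal over the $C_a$ and the rows indexed by $D$ supported in $D$; as $\widetilde M$ agrees with $M$ on these rows, the set $C\cup D$ is closed under access for $\widetilde M$, and inside it each $C_a$ is strongly connected while neither reaching nor being reached by any other $C_b$ or by $D$. Hence each $C_a$ is a class of $\widetilde M$ with $\rho(\widetilde M_{C_aC_a})=\rho(M_{C_aC_a})=1$, so it is a critical class of the stable matrix $\widetilde M$, giving $C\subseteq N^c(\widetilde M)$. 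Consequently
\[
\mathcal{G}^c(\partial f(v))=\mathcal{G}^c(M)=\mathcal{G}(M)_{|C}=\mathcal{G}(\widetilde M)_{|C}\subseteq\mathcal{G}^c(\widetilde M)\subseteq\mathcal{G}^c(\partial f(w)).
\]
Exchanging the roles of $v$ and $w$ (so that $z$ is replaced by $-z$ and each inequality $\leq$ by $\geq$) gives the reverse inclusion, and therefore the desired equality.
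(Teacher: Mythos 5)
Your proof is correct and follows essentially the same route as the paper: choose $M\in\partial f(v)$ realizing $\mathcal{G}^c(\partial f(v))$ via Theorem~\ref{thm:4.5}, use $Mz\le z$ together with Proposition~\ref{prop:4.4} to get $(Mz)_i=z_i$ on $C\cup D$, deduce $M_i\in\partial f_i(w)$ there, assemble a matrix in $\partial f(w)$ sharing those rows, and read off the critical graph. The paper verifies $M_i\in\partial f_i(w)$ by directly chaining the two subdifferential inequalities through $f_i(v)-f_i(w)=M_i(v-w)$, whereas you pass to directional derivatives and invoke support-function duality; these are equivalent, and the remainder of your argument (including the somewhat more detailed check that each $C_a$ remains a critical class of $\widetilde M$) matches the paper's reasoning.
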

\begin{proof}
From Propositions \ref{prop:2.1}, \ref{prop:2.2}, and \ref{prop:3.1} it follows that 
$\partial f(v)$ and $\partial f(w)$ are convex rectangular sets of stable nonnegative 
$n\times n$ matrices. 
By Theorem \ref{thm:4.5} there exists $M\in\partial f(v)$ such that 
$\mathcal{G}^c(M)=\mathcal{G}^c(\partial f(v))$. 
Moreover, 
\[
w-v= f(w)-f(v)\geq M(w-v),
\]
so that $w-v=M(w-v)$ on $C\cup D$, by Proposition \ref{prop:4.4}. (Here $C$ and $D$ 
are the critical nodes and the downstream nodes of $M$.) 
This implies that 
\[
f_i(v)-f_i(w)= (v-w)_i=M_i(v-w)\mbox{\quad for all }i\in C\cup D.
\]
From this equality we deduce that 
\begin{eqnarray*}
f_i(x)-f_i(w) & = & f_i(x)-f_i(v)+f_i(v)-f_i(w)\\
			& \geq & M_i(x-v)+M_i(v-w)\\ 
			& = & M_i(x-w)
\end{eqnarray*}
for all $x\in\mathcal{D}$ and $i\in C\cup D$. 
Thus, $M_i\in\partial f_i(w)$ for all $i\in C\cup D$.
Now let $P\in\partial f(w)$ and define $Q\in \mathcal{P}_{n,n}$ by 
\[
Q_i=\left\{\begin{array}{cl} M_i &\mbox{if }i\in C\cup D\\
                                      P_i &\mbox{otherwise.}
                                      \end{array}\right.
\] 
As $\partial f(w)$ is rectangular, $Q\in\partial f(w)$. 
Clearly $\mathcal{G}^c(f(w))\supseteq \mathcal{G}^c(Q)\supseteq
\mathcal{G}^c(M)=\mathcal{G}^c(\partial f(v))$. 
By interchanging the roles of $v$ and $w$ we obtain the desired equality.
\end{proof}
By Lemma \ref{lem:5.1} we can define for a convex monotone map $f\colon\mathcal{D}\to\mathcal{D}$ with a t-stable fixed point $v\in\mathcal{D}$, 
the set of \emph{critical nodes of} $f$ and the \emph{critical graph of} $f$ respectively by,
\[
N^c(f)=N^c(\partial f(v))\mbox{\quad and \quad} 
\mathcal{G}^c(f)=\mathcal{G}^c(\partial f(v)).
\]
\begin{lemma}\label{lem:5.2} 
Let $f\colon \mathcal{D}\to\mathcal{D}$ be a convex monotone map and let $v\in\mathcal{D}$ be a t-stable fixed point of $f$. 
Let $S\subseteq\{1,\ldots,n\}$ be a set that contains at least one node in each connected component of $\mathcal{G}^c(\partial f(v))$. 
If $w\in\mathcal{D}$ is a fixed point of $f$ and $v\leq w$ on $S$, then $v\leq w$. 
\end{lemma}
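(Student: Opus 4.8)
The plan is to reduce the claim to Proposition \ref{prop:4.4}$(iv)$, applied to a carefully chosen matrix in $\partial f(v)$. Since $v$ is a t-stable fixed point, Propositions \ref{prop:2.1}, \ref{prop:2.2} and \ref{prop:3.1} tell us that $\partial f(v)$ is a convex rectangular set of stable nonnegative $n\times n$ matrices, so Theorem \ref{thm:4.5} supplies a matrix $M\in\partial f(v)$ with $\mathcal{G}^c(M)=\mathcal{G}^c(\partial f(v))$. Because $v$ and $w$ are both fixed points of $f$ and $M\in\partial f(v)$, the defining inequality of the subdifferential gives
\[
w-v=f(w)-f(v)\geq M(w-v).
\]
Setting $z:=w-v$, we thus have $Mz\leq z$ with $M$ a stable nonnegative matrix, which is exactly the hypothesis of Proposition \ref{prop:4.4}.

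The next step is to identify the connected components of $\mathcal{G}^c(M)$ with the critical classes $C_1,\ldots,C_r$ of $M$. By definition $N^c(M)=C_1\cup\cdots\cup C_r$ and $\mathcal{G}^c(M)$ is the restriction of $\mathcal{G}(M)$ to these nodes. If $\mathcal{G}^c(M)$ contained an arc from $i\in C_k$ to $j\in C_l$, then $i$ would have access to $j$, and Proposition \ref{prop:4.2} forces $k=l$; hence $\mathcal{G}^c(M)$ has no arcs joining distinct critical classes. Since each $C_k$ is strongly connected (being a class of $M$), the connected components of $\mathcal{G}^c(M)$ — whether understood in the weak or the strong sense — are precisely $C_1,\ldots,C_r$.

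Because $\mathcal{G}^c(M)=\mathcal{G}^c(\partial f(v))$, the set $S$ therefore contains at least one node of every critical class of $M$. The hypothesis that $v\leq w$ on $S$ says exactly that $z_S\geq 0$, so Proposition \ref{prop:4.4}$(iv)$ applied to $M$ and $z$ yields $z\geq 0$, i.e.\ $v\leq w$, as desired. The only point carrying any content is the identification of the components of $\mathcal{G}^c(M)$ with the critical classes of $M$, where Proposition \ref{prop:4.2} (no access between distinct critical classes) does the work; once Theorem \ref{thm:4.5} has been used to make $\mathcal{G}^c(M)$ as large as $\mathcal{G}^c(\partial f(v))$, everything else is a routine assembly of the earlier results. (The degenerate case $\mathcal{G}^c(\partial f(v))=\emptyset$, where $S$ may be empty, is covered automatically: then $M$ has no critical classes and Proposition \ref{prop:4.4}$(iv)$ gives $z\geq 0$ with the vacuous hypothesis.)
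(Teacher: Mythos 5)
Your proof is correct and follows essentially the same route as the paper: invoke Theorem \ref{thm:4.5} to find $M\in\partial f(v)$ with $\mathcal{G}^c(M)=\mathcal{G}^c(\partial f(v))$, use the subdifferential inequality to get $M(w-v)\leq w-v$, and apply Proposition \ref{prop:4.4}$(iv)$. The paper leaves implicit the identification of the connected components of $\mathcal{G}^c(\partial f(v))$ with the critical classes of $M$ (via Proposition \ref{prop:4.2}), which you have correctly and usefully spelled out.
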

\begin{proof}
By Theorem \ref{thm:4.5} there exists $M\in \partial f(v)$ such that $\mathcal{G}^c(M)=\mathcal{G}^c(\partial f(v))$. 
Then $w-v= f(w)-f(v)\geq M(w-v)$ and $w-v\geq 0$ on $S$, so that $w-v\geq 0$ by 
Proposition \ref{prop:4.4}$(iv)$.
\end{proof}
From the previous lemma we  immediately deduce the following theorem for 
t-stable fixed points. 
\begin{theorem}\label{thm:5.3} 
Let $f\colon \mathcal{D}\to\mathcal{D}$ be a convex monotone map and let $v,w\in\mathcal{D}$ be t-stable fixed points of $f$. 
If $v=w$ on a set $S\subseteq\{1,\ldots,n\}$ that has at least one node in each strongly 
connected component of $\mathcal{G}^c(f)$, then $v=w$. In particular,  the t-stable fixed point is unique, if $N^c(f)$ is empty. 
\end{theorem}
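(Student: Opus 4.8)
The plan is to derive the statement directly from Lemma \ref{lem:5.2} by a symmetry argument, using Lemma \ref{lem:5.1} to make sense of $\mathcal{G}^c(f)$ at both fixed points simultaneously. First I would observe that since $w$ is also a t-stable fixed point of $f$, Lemma \ref{lem:5.1} gives $\mathcal{G}^c(\partial f(v))=\mathcal{G}^c(\partial f(w))=\mathcal{G}^c(f)$; in particular the hypothesis says that $S$ meets every strongly connected component of both $\mathcal{G}^c(\partial f(v))$ and $\mathcal{G}^c(\partial f(w))$. Since a (weakly) connected component of $\mathcal{G}^c$ is a union of strongly connected components, $S$ then also meets every connected component of each of these two graphs, so the hypothesis of Lemma \ref{lem:5.2} is satisfied with $v$, respectively $w$, playing the role of the t-stable fixed point.

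Next I would apply Lemma \ref{lem:5.2} twice. Taking the t-stable fixed point to be $v$ and the fixed point to be $w$: since $v=w$ on $S$ we have in particular $v\leq w$ on $S$, and Lemma \ref{lem:5.2} yields $v\leq w$. Interchanging the roles of $v$ and $w$ --- which is legitimate precisely because $w$ is also t-stable and, by the first paragraph, $S$ meets every connected component of $\mathcal{G}^c(\partial f(w))$ as well --- gives $w\leq v$. Combining the two inequalities yields $v=w$.

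Finally, for the ``in particular'' clause I would note that if $N^c(f)=\emptyset$, then $\mathcal{G}^c(f)$ has no nodes and hence no strongly connected components, so the empty set $S=\emptyset$ vacuously meets every strongly connected component of $\mathcal{G}^c(f)$. Any two t-stable fixed points $v,w$ then agree (vacuously) on this $S$, and the first part of the theorem forces $v=w$; thus there is at most one t-stable fixed point.

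I do not expect a genuine obstacle here: the argument is essentially a two-line consequence of Lemma \ref{lem:5.2}. The only point that needs a little care is the bookkeeping around the critical graph --- one must invoke Lemma \ref{lem:5.1} to know that $\mathcal{G}^c$ is the same whether computed at $v$ or at $w$, so that the single set $S$ simultaneously satisfies the hypotheses of Lemma \ref{lem:5.2} in both directions --- together with the harmless passage from ``strongly connected component'' in the theorem statement to ``connected component'' in Lemma \ref{lem:5.2}.
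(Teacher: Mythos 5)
Your proposal is correct and follows exactly the route the paper intends: the paper states that the theorem is an immediate consequence of Lemma~\ref{lem:5.2}, and your two-sided application of that lemma (using Lemma~\ref{lem:5.1} to identify $\mathcal{G}^c(\partial f(v))$ with $\mathcal{G}^c(\partial f(w))$) is precisely the symmetry argument being invoked. The remarks on the vacuous case $N^c(f)=\emptyset$ and on the passage from strongly connected to connected components are fine; in fact the latter is a non-issue here, since by Theorem~\ref{thm:4.5} and Proposition~\ref{prop:4.3}(ii) the graph $\mathcal{G}^c(f)$ equals $\mathcal{G}^c(M)$ for some single $M\in\partial f(v)$, and its (weakly) connected components coincide with its strongly connected components, both being the critical classes of $M$.
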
 
To analyse the geometry of the t-stable fixed point set $\mathcal{E}_t(f)$ 
and t-stable periodic points, we need some preliminary results concerning convex monotone positively homogeneous maps.  

\section{Positively homogeneous maps} 
If $h\colon\mathbb{R}^n\to\mathbb{R}^n$ is a monotone convex positively homogeneous map, then $0$ is a fixed point and we can associate to $h$ a graph $\mathcal{G}(h)$ by
\[
\mathcal{G}(h)=\mathcal{G}(\partial h(0)).
\]
If, in addition, $0$ is t-stable, then we define 
\[
A(h)=\{i\colon \mbox{there exists a path in $\mathcal{G}(h)$ from $i$ to some $j\in N^c(h)$}\}
\]
and we put $B(h)=\{1,\ldots,n\}\setminus A(h)$. 
Convex monotone positively homogeneous maps, which have $0$ as a t-stable fixed point, have the following properties. 
\begin{lemma}\label{lem:6.1} 
Let $h\colon\mathbb{R}^n\to\mathbb{R}^n$be a convex monotone positively homogeneous map, with $0$ as a t-stable fixed point. 
Write $C=N^c(h)$, $A=A(h)$, and $B=B(h)$, and identify each $x\in\mathbb{R}^n$ 
with $(x_A,x_B)\in\mathbb{R}^A\times\mathbb{R}^B$. 
Then $C\subseteq A$ and the map $h$ can be rewritten in the form 
\[
h(x_A,x_B)=(h_A(x_A,x_B),h_B(x_B)),
\]
where $h_A\colon\mathbb{R}^A\times\mathbb{R}^B\to\mathbb{R}^A$ and 
$h_B\colon \mathbb{R}^B\to\mathbb{R}^B$ are convex monotone positively homogeneous maps. Moreover, if $h^A\colon\mathbb{R}^A\to\mathbb{R}^A$ is given by,
\[
h^A(y)=h_A(y,0)\mbox{\quad for all } y\in\mathbb{R}^A,
\]
then for each $y\in\mathbb{R}^A_+$ with $y_C\gg 0$ and each $i\in A$, there exists $k\geq 1$ such that $(h^A)_i^k(y)>0$. 
\end{lemma}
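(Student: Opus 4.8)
The plan is to split the statement into the structural part ($C\subseteq A$ and the block‑triangular form, with the stated regularity of $h_A,h_B$) and the positivity part for $h^A$, and to attack each by working with the graph $\mathcal{G}(h)=\mathcal{G}(\partial h(0))$, replacing it by a single matrix that realizes it whenever convenient. Throughout, $\partial h(0)$ is a convex rectangular set of nonnegative stable $n\times n$ matrices, since $0$ is a t‑stable fixed point (Propositions \ref{prop:2.1}, \ref{prop:2.2}, \ref{prop:3.1}).

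\emph{Structural part.} First I would record two facts about $\mathcal{G}(h)$. If $i\in C=N^c(h)$, then $i$ lies in a critical class $C'$ of some $P\in\partial h(0)$ with $\rho(P_{C'C'})=1>0$; a nonnegative matrix with positive spectral radius has a cycle in its graph, and since $C'$ is a single class (hence strongly connected, the case $C'=\{i\}$ meaning a loop at $i$) this cycle can be taken through $i$, giving a walk of positive length from $i$ to $i$ in $\mathcal{G}(h)$. In particular $i\in A(h)$, so $C\subseteq A$. Dually, if $i\to j$ is an arc of $\mathcal{G}(h)$ with $j\in A$, then prefixing it to a path from $j$ to $N^c(h)$ gives $i\in A$; equivalently no arc of $\mathcal{G}(h)$ runs from $B$ to $A$, so $P_{BA}=0$ for every $P\in\partial h(0)$. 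Since $h$ is positively homogeneous with $h(0)=0$ we have $h=h'_0$, so by (\ref{eq:2.3}) $h_i(x)=\sup_{P\in\partial h(0)}P_ix$; for $i\in B$ the contributions $P_{iA}x_A$ vanish, hence $h_i(x)$ depends only on $x_B$. This yields $h(x_A,x_B)=(h_A(x_A,x_B),h_B(x_B))$ with $h_A$ the $A$‑coordinates of $h$ and $h_B(x_B)=(h_i(0,x_B))_{i\in B}$; these maps — and hence $h^A(y)=h_A(y,0)$ — are convex, monotone and positively homogeneous, being coordinates of $h$ composed with a linear map, so this part is routine.

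\emph{Positivity part.} The key inequality is: for every $M\in\partial h(0)$ and every $y\in\mathbb{R}^A$, $h^A(y)\ge M_{AA}\,y$, because $h^A(y)_i=h_i(y,0)\ge M_i(y,0)=(M_{AA}y)_i$ for $i\in A$. Since $h^A$ is monotone this bootstraps to $(h^A)^k(y)\ge (M_{AA})^ky$ for all $k\ge1$. By Lemma~\ref{lem:4.4bis} there is $M_0\in\partial h(0)$ with $\mathcal{G}(M_0)=\mathcal{G}(h)$. Now fix $y\in\mathbb{R}^A_+$ with $y_C\gg0$ and $i\in A$. By definition of $A$ there is a path in $\mathcal{G}(h)=\mathcal{G}(M_0)$ from $i$ to some node of $C$, and if $i\in C$ one uses instead the closed walk of positive length through $i$ found above; in all cases we obtain a walk $i=i_0\to i_1\to\cdots\to i_l=j$ in $\mathcal{G}(M_0)$ with $j\in C$ and $l\ge1$. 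Each $i_m$ has a path to $C$ (a tail of the walk), so $i_m\in A$, whence this is a walk in $\mathcal{G}\bigl((M_0)_{AA}\bigr)$; therefore $\bigl(((M_0)_{AA})^l\bigr)_{ij}\ge\prod_{m=1}^l (M_0)_{i_{m-1}i_m}>0$, and since $(M_0)_{AA}\ge0$ and $y\ge0$ we get $\bigl(((M_0)_{AA})^l y\bigr)_i\ge\bigl(((M_0)_{AA})^l\bigr)_{ij}\,y_j>0$, using $y_j>0$ as $j\in C$ and $y_C\gg0$. Combining with the iterated inequality, $(h^A)_i^l(y)\ge\bigl(((M_0)_{AA})^l y\bigr)_i>0$, which is the claim with $k=l$.

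The convexity/monotonicity/homogeneity bookkeeping is routine. The step needing care is the combinatorial one: one must ensure the connecting walk can be chosen of positive length — which forces treating critical classes separately via $\rho(P_{C'C'})=1>0$ — and that it never leaves $A$, so that it is detected by a power of the submatrix $(M_0)_{AA}$ rather than of $M_0$; both rest on the characterization of $A$ as the nodes with access to $N^c(h)$ and on having replaced the union of graphs $\mathcal{G}(h)$ by a single matrix $M_0$.
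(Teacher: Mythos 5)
Your proof is correct and follows essentially the same route as the paper: use $P_{BA}=0$ for all $P\in\partial h(0)$ together with rectangularity to obtain the block‑triangular form, and use Lemma~\ref{lem:4.4bis} to pick a single $M_0$ with $\mathcal{G}(M_0)=\mathcal{G}(h)$, then iterate $(h^A)^k(y)\ge((M_0)_{AA})^k y$ along a path of length $\ge 1$ from $i$ into $C$. Your version is slightly more careful than the paper's on two minor points it leaves implicit — that for $i\in C$ a positive‑length closed walk exists (via $\rho(P_{C'C'})=1>0$), and that the connecting walk stays inside $A$ so it is detected by the submatrix $(M_0)_{AA}$ — but the argument is the same one.
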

\begin{proof}
Since $h$ is convex and positively hommogeneous, $h'_0=h$ and 
$h(x)=\sup_{P\in\partial h(0)} Px$ for all $x\in\mathbb{R}^n$.
Since $B=\{1,\ldots,n\}\setminus A$, we can write $h$ in the form 
\[
h(x_A,x_B)=(h_A(x_A,x_B),h_B(x_A,x_B)).
\]
We note that $P_{BA}=0$ for all $P\in\partial h(0)$. Indeed, otherwise there exists 
$j\in B$ that has access to some node $i\in A$ in $\mathcal{G}(P)$. 
But this implies that there exists a path from $j$ to a node in $C$ in $\mathcal{G}(h)$, 
as $\mathcal{G}(P)\subseteq \mathcal{G}(h)$, which contradicts $j\in B$. Since $\partial h(0)$ is rectangular, 
\[
h_B(x_A,x_B)=\sup_{P\in\partial h(0)} P_{BA}x_A+P_{BB}x_B
                       =\sup_{P\in\partial h(0)} P_{BB}x_B.
                       \] 
Thus, $h_B(x_A,x_B)$ is of the form $h_B(x_B)$, and therefore $h$ can be rewritten as 
\[
h(x_A,x_B)=(h_A(x_A,x_B),h_B(x_B)),
\]
for all $(x_A,x_B)\in\mathbb{R}^A\times\mathbb{R}^B$. 

To prove the last assertion we let $y\in\mathbb{R}^A_+$ be such that $y_C\gg 0$ and 
$i\in A$. 
By Lemma \ref{lem:4.4bis} there exists $P\in\partial h(0)$ such that $\mathcal{G}(P)=\mathcal{G}(h)$. 
We have that $h^A(z)=h_A(z,0)\geq P_{AA}z$ for all $z\in\mathbb{R}^A$. 
Hence $(h^A)^k(y)\geq (P_{AA})^k y$ for all $k\geq 1$. Since there exists a path 
from $i\in A$ to some node $j\in C$, say with length $m\geq 1$, we get that $(h^A)(y)_i\geq (P_{AA})^m_{ij} y_j>0$. 
\end{proof}
By using the previous lemma we prove the following proposition. 
\begin{proposition}\label{prop:6.2} 
If $h\colon\mathbb{R}^n\to\mathbb{R}^n$ is a convex monotone positively homogeneous map, with $0$ as a t-stable fixed point, and $h$ has a fixed point 
$v$ such that $v\gg 0$ on $A(h)$, then $v=0$ on $B(h)$. 
\end{proposition}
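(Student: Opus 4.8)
The plan is to reduce to the block map $h_B$ supplied by Lemma~\ref{lem:6.1} and to show that $v_B:=(v_i)_{i\in B(h)}$ is forced to vanish, exploiting that $h_B$ inherits the t-stability of $0$ but has no critical nodes. Write $C=N^c(h)$, $A=A(h)$, $B=B(h)$ and identify $x$ with $(x_A,x_B)$. By Lemma~\ref{lem:6.1}, $C\subseteq A$ and $h(x_A,x_B)=(h_A(x_A,x_B),h_B(x_B))$ with $h_B\colon\mathbb{R}^B\to\mathbb{R}^B$ convex, monotone and positively homogeneous; applying $h(v)=v$ to the $B$-block gives $h_B(v_B)=v_B$.

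The first step is to show $v\geq 0$. By Theorem~\ref{thm:4.5} there is $M\in\partial h(0)$ with $\mathcal{G}^c(M)=\mathcal{G}^c(h)$, hence $N^c(M)=C$; moreover $M$ is stable since $0$ is t-stable (Proposition~\ref{prop:3.1}). From $v=h(v)\geq Mv$ we get $Mv\leq v$, and since $C\subseteq A$ and $v\gg 0$ on $A$, the set $S=C$ meets every critical class of $M$ and satisfies $v_S\geq 0$; Proposition~\ref{prop:4.4}(iv) then yields $v\geq 0$, and in particular $v_B\geq 0$.

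Next I would transfer the hypotheses to $h_B$. For any $x_B$ the orbit of $(0,x_B)$ under $h$ has $B$-block $\big(h_B^k(x_B)\big)_k$, so t-stability of $0$ for $h$ forces every orbit of $h_B$ to be bounded from above; thus $0$ is a t-stable fixed point of $h_B$ and every matrix in $\partial h_B(0)$ is stable (Proposition~\ref{prop:3.1}). I claim $N^c(h_B)=\emptyset$. For $i\in B$ the function $h_i$ depends only on $x_B$ (the proof of Lemma~\ref{lem:6.1} shows $P_{iA}=0$ for all $P\in\partial h(0)$), so by rectangularity of $\partial h(0)$ one has $\partial h_B(0)=\{P_{BB}\colon P\in\partial h(0)\}$. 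If some $Q=P_{BB}$ had a critical class $C'\subseteq B$, then $C'$ is strongly connected in $\mathcal{G}(P)$, hence contained in a class $C''$ of $P$, and $1=\rho(Q_{C'C'})=\rho(P_{C'C'})\leq\rho(P_{C''C''})\leq\rho(P)\leq 1$ by Proposition~\ref{prop:4.2bis} and stability of $P$; then $C''\subseteq N^c(P)\subseteq N^c(h)=C\subseteq A$, contradicting $\emptyset\neq C'\subseteq B$ and $A\cap B=\emptyset$. I expect this verification to be the main obstacle, since it requires pinning down $\partial h_B(0)$ exactly in terms of $\partial h(0)$ before the spectral-radius comparison can lift a putative critical class.

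Finally I would derive a contradiction from $v_B\neq 0$. Since $h_B$ is convex and positively homogeneous, $(h_B(y))_i=\sup_{r\in\partial(h_B)_i(0)}ry$, and the compact set $\partial(h_B)_i(0)$ attains this supremum at $v_B$ for some row $\hat{Q}_i$ with $\hat{Q}_i v_B=(h_B(v_B))_i=(v_B)_i$; by rectangularity the matrix $\hat{Q}$ with rows $\hat{Q}_i$ lies in $\partial h_B(0)$ and satisfies $\hat{Q}v_B=v_B$. If $v_B\neq 0$ this makes $1$ an eigenvalue of $\hat{Q}$, so $\rho(\hat{Q})\geq 1$; but $\hat{Q}$ is stable with no critical class (because $N^c(h_B)=\emptyset$), which forces $\rho(\hat{Q})<1$ — a contradiction. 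Hence $v_B=0$, i.e.\ $v=0$ on $B(h)$.
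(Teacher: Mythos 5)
Your proof is correct, but it takes a genuinely different route from the paper's. The paper's argument is a construction: it picks $M\in\partial h(0)$ with $\mathcal{G}^c(M)=\mathcal{G}^c(h)$, builds an explicit $u\geq 0$ supported on $C=N^c(h)$ out of Perron--Frobenius eigenvectors of the blocks $M_{C_iC_i}$ so that $h(u)\geq u$, and then uses monotonicity and t-stability to iterate: $v=\lim_k h^k(u)$ exists, is a fixed point, is $\gg 0$ on $C$ (hence on all of $A$ by the last part of Lemma~\ref{lem:6.1}), and is $0$ on $B$ because $u_B=0$ and $h_B(0)=0$. In short, the paper is proving an \emph{existence} statement, and it is exactly this existence that is invoked later (in Theorem~\ref{thm:6.5} and Proposition~\ref{prop:8.4}: ``there exists an eigenvector $u$ with $u\gg 0$ on $A$ and $u=0$ on $B$''). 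Your proof, by contrast, is a \emph{forcing} argument: you take the fixed point $v$ as given and show its $B$-block must vanish, by establishing that $h_B$ has $0$ as a t-stable fixed point with $N^c(h_B)=\emptyset$ and deducing that every $Q\in\partial h_B(0)$ has $\rho(Q)<1$ --- in effect re-deriving the special case $\tau(h_B)<1$ of Proposition~\ref{prop:6.3}, which the paper proves immediately after but by a shorter route and does not use in the proof of Proposition~\ref{prop:6.2}.

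Two remarks. First, your opening step (showing $v\geq 0$ via Proposition~\ref{prop:4.4}(iv)) is in fact unnecessary: once you have $\hat{Q}v_B=v_B$ for some $\hat{Q}\in\partial h_B(0)$ attaining the componentwise supremum, the hypothesis $v_B\neq 0$ already makes $1$ an eigenvalue of $\hat{Q}$ regardless of sign, contradicting $\rho(\hat{Q})<1$. So your argument actually shows the stronger statement that \emph{every} fixed point of $h$ vanishes on $B$ --- the positivity hypothesis on $A$ is not used. Second, you should be aware that your proof does not by itself replace the paper's for the downstream applications: Theorem~\ref{thm:6.5} and Proposition~\ref{prop:8.4} rely on the \emph{existence} of a fixed point that is positive on $A$, which your argument does not produce. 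To be self-contained one would need to append the paper's explicit construction (or some equivalent) to your forcing argument. The claim $\partial h_B(0)=\{P_{BB}\colon P\in\partial h(0)\}$ is correct but deserves a sentence of justification via the support-function characterization of $(h_B)_i$ and compactness/convexity/rectangularity of $\partial h(0)$.
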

\begin{proof}
By Theorem \ref{thm:4.5} there exists $M\in\partial h(0)$ such that 
$\mathcal{G}^c(M)=\mathcal{G}^c(h)$. Let $C_1,\ldots, C_r$ denote the critical classes of $M$, so $C=C_1\cup\ldots\cup C_r$. By the Perron-Frobenius theorem 
\cite{Gant} there exists for each $1\leq i\leq r$ a positive eigenvector $u^i\in\mathbb{R}^{C_i}$ such that $M_{C_iC_i}u^i=u^i$. Define $u\in\mathbb{R}^n$ by 
$u_{C_i}=u^i$  for $1\leq i\leq r$ and $u_j=0$ if $j\not\in C$. Clearly $u\geq 0$, 
$u\gg 0$ on $C$, and $Mu\geq u$. This implies that 
\begin{equation}\label{eq:6.0}
h(u)\geq Mu\geq u\geq 0.
\end{equation}
As $0$ is t-stable and $h=h'_0$, we know that $\mathcal{O}(u,h)$ is bounded. 
Moreover, it follows from (\ref{eq:6.0}) that $(h^k(u))_k$ is increasing and hence $v=\lim_{k\to\infty} h^k(u)$ exists. Obviously $v$ is a fixed point of $h$ and $v\geq u$, 
so that $v\gg 0$ on $C$. Remark that $u_B=0$, because $B\cap C=\emptyset$. 
Therefore $v_B=0$ by Lemma \ref{lem:6.1} and hence $v_A$ is a fixed point of $h^A$. 
By the second part of Lemma \ref{lem:6.1} we obtain that $v_A\gg 0$, as $v_C\gg 0$.
\end{proof}
For a positively homogeneous map $h\colon\mathbb{R}^n\to\mathbb{R}^n$ the 
\emph{spectral radius} is defined by 
\begin{equation}\label{eq:6.1}
\tau(h)=\sup\{\lambda \geq 0\colon h(x)=\lambda x\mbox{ for some }
x\in\mathbb{R}^n_+\setminus\{0\}\}. 
\end{equation}
\begin{proposition}\label{prop:6.3}
Let $h\colon\mathbb{R}^n\to\mathbb{R}^n$ be a convex monotone positively homogeneous map,with $0$ as a t-stable fixed point. 
If $h_B\colon\mathbb{R}^B\to\mathbb{R}^B$ is as in Lemma \ref{lem:6.1}, then 
$\tau(h_B)<1$. 
\end{proposition}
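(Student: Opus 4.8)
The plan is to argue by contradiction. The easy half is that $\tau(h_B)\le 1$; the real work is to show that $\tau(h_B)=1$ would produce a fixed point of $h$ that is positive on $A(h)$ and yet nonzero on $B(h)$, contradicting Proposition~\ref{prop:6.2}. For the easy half, recall from Lemma~\ref{lem:6.1} that $h(x_A,x_B)=(h_A(x_A,x_B),h_B(x_B))$, so along any orbit of $h$ the $B$-coordinates evolve autonomously under $h_B$. If $h_B(y)=\lambda y$ with $\lambda>1$ and $y\in\mathbb{R}^B_+\setminus\{0\}$, the orbit of $(0,y)$ under $h$ then has $B$-component $\lambda^k y$, which is not bounded from above since $y$ has a positive coordinate; as $0$ is t-stable and $h'_0=h$, this contradicts Definition~\ref{def:3.2}. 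Hence every eigenvalue of $h_B$ on $\mathbb{R}^B_+\setminus\{0\}$ is $\le 1$, so $\tau(h_B)\le 1$.

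Assume now, for contradiction, that $\tau(h_B)=1$. First I would check that this supremum is attained: taking $\lambda_m\to 1$ with eigenvectors $x^m\in\mathbb{R}^B_+$ normalized to $\|x^m\|=1$, a convergent subsequence together with the continuity of $h_B$ yields $x\in\mathbb{R}^B_+\setminus\{0\}$ with $h_B(x)=x$. Set $\bar x=(0,x)\in\mathbb{R}^A\times\mathbb{R}^B$. Since $h$ is monotone and $h(0)=0$, we get $h(\bar x)=(h_A(0,x),x)\ge\bar x$. Separately, $\partial h(0)$ is a convex rectangular set of stable nonnegative matrices (Propositions~\ref{prop:2.1},~\ref{prop:2.2},~\ref{prop:3.1}), so Theorem~\ref{thm:4.5} yields $M\in\partial h(0)$ with $\mathcal{G}^c(M)=\mathcal{G}^c(h)$. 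Writing $C_1,\dots,C_r$ for the critical classes of $M$ (so $C:=N^c(h)=C_1\cup\dots\cup C_r$), choosing positive Perron eigenvectors $M_{C_iC_i}u^i=u^i$, and putting $u_{C_i}=u^i$, $u_j=0$ for $j\notin C$, one obtains $u\ge 0$, $u\gg 0$ on $C$, and, using that $M_{CC}$ is block diagonal (Proposition~\ref{prop:4.2}) together with representation~(\ref{eq:2.3}), $h(u)\ge Mu\ge u$.

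Then I would set $z=u\vee\bar x$. Since $h(z)\ge h(u)$ and $h(z)\ge h(\bar x)$ by monotonicity, we have $h(z)\ge h(u)\vee h(\bar x)\ge u\vee\bar x=z$; thus $(h^k(z))_k$ is increasing, and it is bounded from above because $0$ is t-stable, so it converges to a fixed point $v^*$ of $h$ with $v^*\ge z\ge 0$. In particular $v^*_C\gg 0$ and $v^*_B\ge x\ne 0$. It remains to see that $v^*\gg 0$ on $A(h)$. From $v^*=h(v^*)$ and Lemma~\ref{lem:6.1} we get $v^*_A=h_A(v^*_A,v^*_B)\ge h_A(v^*_A,0)=h^A(v^*_A)$, whence $v^*_A\ge (h^A)^k(v^*_A)$ for every $k\ge 1$; since $v^*_A\ge 0$ and $v^*_C\gg 0$, the last part of Lemma~\ref{lem:6.1} provides, for each $i\in A$, an index $k$ with $(h^A)_i^k(v^*_A)>0$, and therefore $(v^*_A)_i>0$. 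So $v^*$ is a fixed point of $h$ with $v^*\gg 0$ on $A(h)$, and Proposition~\ref{prop:6.2} forces $v^*=0$ on $B(h)$, contradicting $v^*_B\ge x\ne 0$. This establishes $\tau(h_B)<1$.

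The main obstacle is precisely this construction of $v^*$: once $\tau(h_B)\le 1$ is in hand, ruling out equality is not a matter of a single estimate but requires gluing a $B$-eigenvector $\bar x$ to a critical Perron eigenvector $u$ via the lattice join, so that the associated increasing and bounded orbit converges to a fixed point which is simultaneously positive on all critical classes (hence, by Lemma~\ref{lem:6.1}, on all of $A(h)$) and nonvanishing on $B(h)$ --- exactly the configuration excluded by Proposition~\ref{prop:6.2}.
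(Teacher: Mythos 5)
Your construction of the fixed point $v^*$ is correct: $h(\bar x)\ge \bar x$ for $\bar x=(0,x)$, $h(u)\ge Mu\ge u$ for the Perron vector $u$ supported on $C$, $z=u\vee\bar x$ has an increasing orbit that (by t-stability) converges to a fixed point $v^*\ge z\ge 0$ with $v^*_C\gg 0$, and Lemma~\ref{lem:6.1} indeed upgrades $v^*_C\gg 0$ to $v^*_A\gg 0$. The problem is the final step, where you invoke Proposition~\ref{prop:6.2} as a universal statement: ``any fixed point $v$ of $h$ with $v\gg 0$ on $A(h)$ must satisfy $v=0$ on $B(h)$.'' That is not what the paper's proof of Proposition~\ref{prop:6.2} delivers. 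That proof constructs one specific fixed point $v=\lim_k h^k(u)$ (starting from the Perron vector $u$ with $u_B=0$, so $v_B=0$ comes for free from the block structure) and shows it is $\gg 0$ on $A$ and $=0$ on $B$; it is an \emph{existence} argument, and this is also how it is used in Theorem~\ref{thm:6.5}. The coexistence of your $v^*$ (positive on $A$, nonzero on $B$) with the paper's $v$ (positive on $A$, zero on $B$) is not a contradiction.

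Worse, the universal reading of Proposition~\ref{prop:6.2} that you need is essentially \emph{equivalent} to Proposition~\ref{prop:6.3} itself: from the block form $h(x_A,x_B)=(h_A(x_A,x_B),h_B(x_B))$, a fixed point $v$ of $h$ has $h_B(v_B)=v_B$, and a convex monotone positively homogeneous $h_B$ with a nonzero fixed point has $\tau(h_B)\ge 1$, while conversely $\tau(h_B)\ge 1$ produces (exactly by your gluing construction) a fixed point of $h$ that is $\gg 0$ on $A$ and nonzero on $B$. So reducing Proposition~\ref{prop:6.3} to the universal Proposition~\ref{prop:6.2} is circular. The paper's own proof takes a direct route you did not use: writing $h_B(v)=\sup_{P\in\partial h(0)}P_{BB}v$, the compactness and rectangularity of $\partial h(0)$ yield a single stable matrix $Q\in\partial h(0)$ with $Q_{BB}v=rv$ for the (attained) value $r=\tau(h_B)$ and $v\ge 0$, $v\neq 0$; this forces $r\le\rho(Q_{BB})\le\rho(Q)\le 1$, hence $r=1$, and then some class $K\subseteq B$ with $\rho(Q_{KK})=1$ lies inside a critical class of $Q$, so $K\subseteq N^c(Q)\subseteq N^c(h)\subseteq A$, contradicting $K\subseteq B$. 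If you want to salvage your argument, you would have to insert essentially this matrix/class argument where you currently invoke Proposition~\ref{prop:6.2}.
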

\begin{proof}
Assume by way of contradiction that $\tau(h_B)=r\geq 1$. Then there exists $v\geq 0$ 
with $v\neq 0$ such that $h_B(v)=rv\geq v$. 
But $h_B(v)=\sup_{P\in\partial h(0)} P_{BB}v$ and $\partial h(0)$ is a rectangular compact set of stable nonnegative matrices. Hence $h_B(v)=Q_{BB}v=rv$ for some $Q\in\partial h(0)$. This implies that a class $K$ of $Q$ such that $K\subseteq B$ and 
$\rho(Q_{KK})=r\geq 1$. As $Q$ is stable, $r=1$, and hence $K\subseteq N^c(Q)\subseteq N^c(h)\subseteq A$, which is a contradiction.  
\end{proof}
It is shown by Nussbaum \cite[Theorem 3.1]{LAA} that $\tau(h)=\tau'(h)$, where 
$\tau'(h)$ is the so called \emph{Collatz-Wielandt spectral radius} of a monotone positively homogeneous map $h\colon\mathbb{R}^n\to\mathbb{R}^n$, which is given by 
\begin{equation}\label{eq:6.2}
\tau'(h)=\inf\{\mu>0\colon h(x)\leq \mu x\mbox{ for some }x\gg 0\}.
\end{equation}
Thus, Proposition \ref{prop:6.3} has the following consequence. 
\begin{corollary}\label{cor:6.4}
If $h\colon\mathbb{R}^n\to\mathbb{R}^n$ is a convex monotone  positively homogeneous map, with $0$ as a t-stable fixed point, and $h_B$ is as in Lemma \ref{lem:6.1}, then there exist $0<\lambda <1$ and $w\gg 0$ such that 
$h_B(w)\leq \lambda w$.
\end{corollary}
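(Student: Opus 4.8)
The plan is to combine Proposition~\ref{prop:6.3} with Nussbaum's identity $\tau(h_B)=\tau'(h_B)$ quoted just above the statement. First I would recall that, by Lemma~\ref{lem:6.1}, the map $h_B\colon\mathbb{R}^B\to\mathbb{R}^B$ is itself a convex monotone positively homogeneous map, so the Collatz--Wielandt spectral radius $\tau'(h_B)$ defined in~(\ref{eq:6.2}) is meaningful and, by \cite[Theorem 3.1]{LAA}, coincides with $\tau(h_B)$. Proposition~\ref{prop:6.3} then yields
\[
\tau'(h_B)=\tau(h_B)<1 .
\]

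Next I would fix any $\lambda$ with $\tau'(h_B)<\lambda<1$. Since $\tau'(h_B)$ is defined as an infimum, $\lambda$ fails to be a lower bound of the set $\{\mu>0\colon h_B(x)\leq\mu x\text{ for some }x\gg 0\}$; hence there is some $\mu$ in this set with $\mu<\lambda$, i.e.\ there exists $w\gg 0$ with $h_B(w)\leq\mu w$. Because $w\gg 0$ and $\mu<\lambda$, we have $\mu w\leq\lambda w$, and therefore $h_B(w)\leq\lambda w$, which is exactly the asserted conclusion with this choice of $\lambda$ and $w$.

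There is essentially no serious obstacle here: all the substance lies in Proposition~\ref{prop:6.3} and in Nussbaum's variational characterization of $\tau$, both already established. The only two points needing a line of care are verifying that $h_B$ meets the hypotheses under which $\tau=\tau'$ holds (supplied by Lemma~\ref{lem:6.1}) and observing that the strict inequality $\mu<\lambda$ combined with $w\gg 0$ upgrades $h_B(w)\le\mu w$ to $h_B(w)\le\lambda w$.
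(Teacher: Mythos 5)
Your proof is correct and follows exactly the route the paper intends: the paper gives no explicit proof but says the corollary is a consequence of Proposition~\ref{prop:6.3} together with Nussbaum's identity $\tau=\tau'$, and your argument supplies precisely the routine infimum manipulation needed to turn $\tau'(h_B)<1$ into the existence of $\lambda\in(0,1)$ and $w\gg 0$ with $h_B(w)\leq\lambda w$.
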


We conclude this section by showing that a monotone convex positively homogeneous map with $0$ as a t-stable fixed point is non-expansive with respect to a polyhedral norm. Recall that  a norm on $\mathbb{R}^n$ is called \emph{polyhedral} 
if its unit ball is a polyhedron. 
\begin{theorem}\label{thm:6.5}
Let $h\colon\mathbb{R}^n\to\mathbb{R}^n$ be a monotone convex positively homogeneous map with $0$ as a t-stable fixed point, then there exist $v\gg 0$ and  $\alpha>0$ such that $h$ is non-expansive with respect to the polyhedral norm, 
\begin{equation}\label{eq:6.3}
\|x\|_v=\max_{i\in A(h)} |x_i/v_i|+\alpha\max_{i\in B(h)} |x_i/v_i|\mbox{\quad for }x\in\mathbb{R}^n,
\end{equation}
where $A(h)$ and $B(h)$ are as in Lemma \ref{lem:6.1}.
\end{theorem}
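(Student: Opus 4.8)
The plan is to build the positive vector $v$ and the scalar $\alpha$ by treating the coordinates in $A=A(h)$ and $B=B(h)$ separately, using the block structure $h(x_A,x_B)=(h_A(x_A,x_B),h_B(x_B))$ from Lemma \ref{lem:6.1}, and then showing that for a suitable choice the resulting norm $\|\cdot\|_v$ is not increased by $h$. First I would apply Corollary \ref{cor:6.4} to the $B$-block: there exist $0<\lambda<1$ and a vector $w_B\gg 0$ on $B$ with $h_B(w_B)\le\lambda w_B$. This handles the "downstream'' directions: on $B$ the map is a genuine contraction in the weighted sup-norm with weights $w_B$. For the $A$-block I would use the fact, coming from Proposition \ref{prop:3.1}(iv)$\Leftrightarrow$(v) applied to $h=h_0'$, that $h$ (hence $h^A$) has all orbits bounded, together with the structure of the critical graph: the map $h^A$ restricted to $C=N^c(h)$ has spectral radius $1$ in the Collatz--Wielandt sense. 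Concretely, I would produce a vector $v_A\gg 0$ on $A$ with $h^A(v_A)\le v_A$, by taking the Perron eigenvector $u$ of a matrix $M\in\partial h(0)$ with $\mathcal{G}^c(M)=\mathcal{G}^c(h)$ (as in the proof of Proposition \ref{prop:6.2}), extending it to be positive on all of $A$ via the accessibility statement in Lemma \ref{lem:6.1}, and then — since orbits are bounded — replacing it by $\limsup$ or by an averaged/Cesàro-type dominating vector to get the inequality $h^A(v_A)\le v_A$. Alternatively one can invoke $\tau'(h^A)=\tau(h^A)=1$ directly via Nussbaum's theorem, but one must check $\tau(h^A)\le 1$, which follows because any nonnegative eigenvector with eigenvalue $>1$ would contradict boundedness of orbits of the stable matrices in $\partial h(0)$.

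Next I would assemble $v=(v_A,v_B)\gg 0$ and estimate $\|h(x)\|_v$. For the $B$-coordinates, monotonicity and positive homogeneity give $|h_B(x_B)_i|\le h_B(|x_B|)_i\le \|x_B\|_{w_B}\,h_B(w_B)_i\le\lambda\|x_B\|_{w_B}(w_B)_i$ after rescaling so that $v_B=w_B$, hence $\max_{i\in B}|h(x)_i/v_i|\le\lambda\max_{i\in B}|x_i/v_i|$. For the $A$-coordinates, the subtlety is that $h_A$ depends on $x_B$ as well as $x_A$. Using $h(v)\le v$ on $A$ together with convexity and positive homogeneity (the standard argument: $h_A(|x_A|,|x_B|)\le h_A(\mu v_A,\mu' v_B)$ for $\mu=\|x_A\|_{v_A}$, $\mu'=\|x_B\|_{v_B}$, and $h_A(\mu v_A,\mu' v_B)\le \max(\mu,\mu')\,h(v)_A\le\max(\mu,\mu')v_A$ when $\mu'\le\mu$, using monotonicity in the $x_B$ slot), one gets $\max_{i\in A}|h(x)_i/v_i|\le\max\{\max_{i\in A}|x_i/v_i|,\ \max_{i\in B}|x_i/v_i|\}$. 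Combining the two blocks: writing $a=\max_{i\in A}|x_i/v_i|$ and $b=\max_{i\in B}|x_i/v_i|$, we obtain $\|h(x)\|_v\le\max(a,b)+\alpha\lambda b$, and we need this $\le a+\alpha b$. When $b\le a$ this reads $a+\alpha\lambda b\le a+\alpha b$, true since $\lambda<1$. When $b>a$ it reads $b+\alpha\lambda b\le a+\alpha b$, i.e.\ $b(1+\alpha\lambda-\alpha)\le a$; since $b>a\ge 0$ this forces $1+\alpha\lambda-\alpha\le a/b<1$, which fails for any $\alpha$ unless we are more careful.

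This last point is exactly where the real work lies, and it is the main obstacle: the naive bound on the $A$-block ("$h^A$ can import the full size $b$ of the $B$-part'') is too lossy. The fix is to sharpen the $A$-estimate so that the contribution of $x_B$ to $h(x)_A$ comes with a factor strictly less than $1$, or at least is absorbed. I would do this by choosing $v_A$ not merely to satisfy $h^A(v_A)\le v_A$ but to satisfy the strict inequality $h(v_A, w_B)\le v_A$ on $A$ for the specific $w_B$ already fixed — possible by first scaling $w_B$ down (replacing $w_B$ by $\epsilon w_B$, which keeps $h_B(\epsilon w_B)\le\lambda\epsilon w_B$ by homogeneity) so small that $h_A(v_A,\epsilon w_B)\le v_A$, using continuity of $h_A$ in the second argument and $h_A(v_A,0)=h^A(v_A)\le v_A$ with room to spare on the non-critical part of $A$ (on $C$ there is no room, but there the $B$-influence can be pushed down since no node of $B$ has access to $C$; indeed $P_{CB}$ need not vanish, so one genuinely needs the scaling of $w_B$). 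With $h(v_A,w_B)\le v_A$ on $A$ and $h_B(w_B)\le\lambda w_B$ in hand, the combined estimate becomes $\|h(x)\|_v\le\max(a,b)+\alpha\lambda b$ but now with the improvement that on the $A$-block $\max_{i\in A}|h(x)_i/v_i|\le a$ whenever $b\le 1$ can be arranged homogeneously — more precisely, by homogeneity it suffices to verify non-expansiveness on the unit sphere, and there one picks $\alpha$ large enough that $\|\cdot\|_v$ is dominated by the $B$-part's scale, forcing the relevant orbit points into the region where the clean inequalities apply. I would then choose $\alpha>0$ large enough that $\alpha(1-\lambda)\ge 1$, i.e.\ $\alpha\ge 1/(1-\lambda)$, so that in the case $b>a$ the inequality $b+\alpha\lambda b\le a+\alpha b$ would require $b\le a+\alpha(1-\lambda)b$, which holds as soon as the $A$-estimate is improved to $\max_{i\in A}|h(x)_i/v_i|\le a+(1-\lambda)b\cdot 0$... hence the decisive step is really establishing $\max_{i\in A}|h(x)_i/v_i|\le a + \beta b$ with $\beta$ small enough that $\alpha\ge\beta/(1-\lambda)$ closes both cases; $\beta$ is made small precisely by shrinking $w_B$. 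Finally, I would note $\|\cdot\|_v$ is polyhedral since its unit ball is cut out by finitely many linear inequalities $|x_i/v_i|\le\cdots$, completing the proof.
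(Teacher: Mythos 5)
Your plan is essentially the paper's: split into the $A$- and $B$-blocks via Lemma~\ref{lem:6.1}, use Corollary~\ref{cor:6.4} to get $h_B(w_B)\le\lambda w_B$ with $\lambda<1$, use Proposition~\ref{prop:6.2} to get a positive eigenvector $v_A$ of $h^A$, assemble $v=(v_A,w_B)$, and pick $\alpha$ to absorb the cross-term. You also end up, after some thrashing, at the correct shape of the decisive inequality: a bound $\max_{i\in A}|h(x)_i/v_i|\le a+\beta b$ for some finite $\beta$, together with $\alpha\ge\beta/(1-\lambda)$. However, there are two genuine problems in the middle that you should fix.

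First, the entire detour about shrinking $w_B$ to $\epsilon w_B$ so that $h_A(v_A,\epsilon w_B)\le v_A$ is both unnecessary and unlikely to work. On the critical set $C$ the equality $h^A(v_A)_C=(v_A)_C$ has no slack, and if $h_A(0,\cdot)_C$ is not identically zero no positive $\epsilon$ will repair that; your remark that ``$P_{CB}$ need not vanish'' actually argues \emph{against} your own fix, not for it. The clean route is to drop the requirement $h_A(v_A,w_B)\le v_A$ entirely. Instead, use sub-additivity of $h_A$ (convex, positively homogeneous, $h_A(0,0)=0$): $h_A(x_A,x_B)\le h_A(x_A,0)+h_A(0,x_B)$. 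The first term is $\le a\,v_A$ since $h^A(v_A)\le v_A$; the second term is $\le C b\,v_A$ for some finite $C$, because $h_A(0,\cdot)$ is a finite-valued positively homogeneous convex map hence Lipschitz. This gives $\max_{i\in A}|h(x)_i/v_i|\le a+Cb$ with $C$ depending only on $h$, and then $\alpha>C/(1-\lambda)$ closes the estimate. No smallness of $\beta$ (or of $w_B$) is ever needed.

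Second, you only estimate $\|h(x)\|_v$ in terms of $\|x\|_v$, which is weaker than non-expansiveness $\|h(x)-h(y)\|_v\le\|x-y\|_v$. Because $h$ is convex, positively homogeneous, and $h(0)=0$, you have $h(x)-h(y)\le h(x-y)$ and $h(y)-h(x)\le h(y-x)$, hence $|h(x)_i-h(y)_i|\le\max\{h(x-y)_i,\,h(y-x)_i\}$ for each coordinate $i$; this reduces the non-expansiveness of $h$ to the estimate on $h(z)$ for $z=x-y$ and $z=y-x$ that you already have. You need to state and use this bridge explicitly, since without it ``$\|h(x)\|_v\le\|x\|_v$'' is only Lipschitz-at-zero, not non-expansiveness.
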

\begin{proof}
We use the same notation as in Lemma \ref{lem:6.1}. 
By Proposition  \ref{prop:6.2} $h$ has an eigenvector $u\in\mathbb{R}^n$ such that 
$u\gg 0$ on 
$A=A(h)$ and $u=0$ on $B=B(h)$. Moreover, by Corollary \ref{cor:6.4} there also exists $0<\lambda<1$ and $w\in\mathbb{R}^B$ such that $w\gg 0$ and 
$h_B(w)\leq \lambda w$. Let $v\in\mathbb{R}^A\times\mathbb{R}^B$ be defined by $v=u$ on $A$ and $v=w$ on $B$. 
Further let $W$ be the diagonal matrix with $v$ as its diagonal and define 
$g\colon\mathbb{R}^n\to\mathbb{R}^n$ by $g(x)=(W^{-1}\circ h\circ W)(x)$ for all 
$x\in\mathbb{R}^n$. 
It follows from Lemma \ref{lem:6.1} that we can write $g$ in the form
\[
g(x)=(g_A(x_A,x_B),g_B(x_B)),
\]
where $g_A\colon\mathbb{R}^A\times\mathbb{R}^B\to\mathbb{R}^B$ and 
$g_B\colon\mathbb{R}^B\to\mathbb{R}^B$ are convex monotone positive homogeneous maps. Moreover, $g_A(\mu \mathds{1},0)=\mu\mathds{1}$ and 
$g_B(\mu\mathds{1})\leq \lambda\mu\mathds{1}$ for all $\mu\geq 0$, where $\mathds{1}$ denotes the vector with all coordinates unity. 

We write $\|\cdot\|_\infty$ to denote the sup-norm, so $\|x\|_\infty=\max_i|x_i|$. 
Remark that 
\[
g(x)_B-g(y)_B\leq g(x-y)_B = g_B((x-y)_B)\leq \lambda \|(x-y)_B\|_\infty\mathds{1},
\]
as $g$ is convex, monotone and positively homogeneous. 
By interchanging the roles of $x$ and $y$ we deduce that 
\begin{equation}\label{eq:6.4}
\|g(x)_B-g(y)_B\|_\infty\leq \lambda\|x_B-y_B\|_\infty\mbox{\quad for all }x,y\in\mathbb{R}^n.
\end{equation}
Subsequently we remark that there exists $C>0$ finite such that 
$g_A(0,x_B)\leq C\|x_B\|_\infty\mathds{1}$, as $g_A$ is continuously and positively homogeneous. 
(Recall that $g(x)=\sup_{P\in\partial g(0)} Px$ for each $x\in\mathbb{R}^n$.) 
Thus, for each $x\in\mathbb{R}^n$,  
\begin{equation}\label{eq:6.5}
g(x)_A= g_A(x_A,x_B)\leq g_A(x_A,0)+g_B(0,x_B)\leq 
\|x_A\|_\infty\mathds{1} + C\|x_B\|_\infty\mathds{1}.
\end{equation}

As $g$ is convex, 
\begin{equation}\label{eq:6.6} 
g(x)\leq g(x-y) +g(y),
\end{equation}
so that $g(x)-g(y)\leq g(x-y)$ for all $x,y\in\mathbb{R}^n$. 
As $g(0)=0$, we have that $-g(y)\leq g(-y)$, and hence it follows from (\ref{eq:6.5}) that 
\begin{equation}\label{eq:6.7} 
\|g(x)_A\|_\infty \leq \|x_A\|_\infty+C\|x_B\|_\infty 
\end{equation}
for all $x\in\mathbb{R}^n$. It also follows from (\ref{eq:6.6}) that 
\begin{equation}\label{eq:6.8}
\|g(x)_A-g(y)_A\|_\infty\leq \max\{ \|g(x-y)_A\|_\infty,\|g(y-x)_A\|_\infty\}.
\end{equation}
Now let $\alpha>C/(1-\lambda)$ and define $\|\cdot\|'$ on $\mathbb{R}^n$ by 
\[
\|x\|'=\|x_A\|_\infty +\alpha\|x_B\|_\infty\mbox{\quad for all }x\in\mathbb{R}^A\times\mathbb{R}^B.
\]
It now follows from (\ref{eq:6.4}), (\ref{eq:6.7}) and (\ref{eq:6.8}) that 
\begin{eqnarray*}
\|g(x)-g(y)\|' & = &  \|g(x)_A-g(y)_A\|_\infty +\alpha \|g(x)_B-g(y)_B\|_\infty\\
                    & \leq & \|(x-y)_A\|_\infty +C\|(x-y)_B\|_\infty +
                    \alpha\lambda\|(x-y)_B\|_\infty\\
                    & \leq & \|(x-y)_A\|_\infty + \alpha\|(x-y)_B\|_\infty\\
                    & = & \|x-y\|'.
\end{eqnarray*}
Finally, we recall that $g\circ W^{-1}= W^{-1}\circ g$, so that $h$ is non-expansive 
with respect to $\| W^{-1}(\cdot)\|'=\|\cdot\|_v$ and we are done.
\end{proof}

\section{The tangentially stable fixed point set} 
Throughout this section we assume, in addition to Hypothesis \ref{hyp:2.3}, that the domain $\mathcal{D}\subseteq\mathbb{R}^n$ satisfies the following property. 
\begin{hypothesis}\label{hyp:7.0} 
The domain $\mathcal{D}\subseteq\mathbb{R}^n$ is a \emph{downward} set, i.e., 
if $x\in\mathcal{D}$ and $y\leq x$, then $y\in\mathcal{D}$. 
\end{hypothesis}
Given a convex monotone map $f\colon\mathcal{D}\to\mathcal{D}$ we define $\mathcal{E}^+(f)=\{z\in\mathcal{D}\colon f(z)\leq z\}$. 
As $f$ is convex, $\mathcal{E}^+(f)$ is convex. There exists a natural projection 
from $\mathcal{E}^+(f)$ onto $\mathcal{E}(f)$ if $f$ has a t-stable fixed point 
(cf.\ \cite[Lemma 3.3]{AG}). 
\begin{lemma}\label{lem:7.1} 
If $f\colon \mathcal{D}\to\mathcal{D}$ is a convex monotone map with a t-stable 
fixed point, then 
\begin{equation}\label{eq:7.0} 
f^\omega(z)=\lim_{k\to\infty} f^k(z)
\end{equation}
exists and $f^\omega(z)=z$ on $N^c(f)$ for each $z\in\mathcal{E}^+(f)$. In addition, the  map 
$f^\omega\colon\mathcal{E}^+(f)\to\mathcal{E}(f)$ is a surjective convex monotone projection, i.e., $(f^\omega)^2=f^\omega$.
\end{lemma}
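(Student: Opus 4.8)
The plan is to first establish that the orbit of any $z \in \mathcal{E}^+(f)$ converges. Since $f(z) \leq z$ and $f$ is monotone, the sequence $(f^k(z))_{k\geq 0}$ is non-increasing, so it converges to a limit $f^\omega(z) \in \mathbb{R}^n \cup (\mathbb{R} \cup \{-\infty\})^n$ provided it is bounded from below. To get the lower bound, I would let $v$ be a t-stable fixed point of $f$; by inequality (\ref{3.1}) (the analogue $f^k(x) \geq v + (f'_v)^k(x-v)$ in the proof of Proposition \ref{prop:3.1}), and since Proposition \ref{prop:3.1} tells us that t-stability implies every orbit of $f$ is bounded from below (implication $(vi)\Rightarrow(vii)\Rightarrow(viii)$, noting that $\partial f(v)$ consists of stable matrices), the orbit $\mathcal{O}(z;f)$ is bounded from below. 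Hence the monotone limit $f^\omega(z) = \lim_k f^k(z)$ exists in $\mathbb{R}^n$, is a fixed point of $f$ by continuity of $f$, and $f^\omega(z) \leq z$.

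Next I would show $f^\omega(z) = z$ on $N^c(f)$. By Lemma \ref{lem:5.1} and the definition of $N^c(f)$, the critical nodes of $f$ are computed from $\partial f(v)$ for any t-stable fixed point $v$; but more directly I want to compare $z$ with $w := f^\omega(z)$. Since $w \leq z$ and both $z$ satisfies $f(z) \leq z$ while $w = f(w)$, I apply Theorem \ref{thm:4.5} to the convex rectangular set $\partial f(w)$ of stable nonnegative matrices to get $M \in \partial f(w)$ with $\mathcal{G}^c(M) = \mathcal{G}^c(\partial f(w)) = \mathcal{G}^c(f)$. Then $z - w \geq f(z) - f(w) \geq M(z-w)$, so $M(z-w) \leq z-w$ with $z - w \geq 0$; by Proposition \ref{prop:4.4}$(i)$ and $(ii)$ we get $M_{CC}(z-w)_C = (z-w)_C$ and $(M(z-w))_{C\cup D} = (z-w)_{C\cup D}$, where $C, D$ are the critical and downstream nodes of $M$. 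Since $\rho(M_{C_iC_i}) = 1$ with $M_{C_iC_i}$ irreducible for each critical class, by Perron--Frobenius $(z-w)_{C_i}$ is a nonnegative multiple of the Perron eigenvector, hence either strictly positive or zero on each $C_i$. To rule out the strictly positive case I would use that $w = f^\omega(z)$ is the decreasing limit, so $(z - w)_C$ cannot have been strictly decreased by iteration along critical classes — more carefully, repeating the argument with $f^k(z)$ in place of $z$ shows $(f^k(z) - w)_C = M_{CC}(f^{k-1}(z)-w)_C$ stays constant in $k$ along critical coordinates, contradicting convergence to $0$ unless it is already $0$. Thus $w = z$ on $C = N^c(f)$.

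Then I would verify that $f^\omega$ maps $\mathcal{E}^+(f)$ into $\mathcal{E}(f)$ (done: $f^\omega(z)$ is a fixed point), that it is monotone (immediate: $z_1 \leq z_2$ in $\mathcal{E}^+(f)$ implies $f^k(z_1) \leq f^k(z_2)$ for all $k$, pass to the limit), that it is convex (for $\lambda \in [0,1]$, $f^k(\lambda z_1 + (1-\lambda) z_2) \leq \lambda f^k(z_1) + (1-\lambda) f^k(z_2)$ by convexity and monotonicity of $f$, pass to the limit), and that it is a projection: if $z \in \mathcal{E}(f)$ then $f^k(z) = z$ for all $k$ so $f^\omega(z) = z$, which gives $(f^\omega)^2 = f^\omega$ since $f^\omega(z) \in \mathcal{E}(f) \subseteq \mathcal{E}^+(f)$. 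Surjectivity onto $\mathcal{E}(f)$ follows from the same observation, as $\mathcal{E}(f) \subseteq \mathcal{E}^+(f)$ and $f^\omega$ restricts to the identity there.

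The main obstacle I anticipate is the argument that $f^\omega(z) = z$ on $N^c(f)$: one must carefully exploit that the decreasing iterates act as a \emph{linear} contraction-free map $M_{CC}$ on the critical coordinates of the difference, so that any strict decrease on a critical class would persist forever and prevent convergence, whereas the rest of the analysis is routine monotone/convexity limit-passing. The subtlety is choosing $M$ from the right subdifferential ($\partial f(w)$, not $\partial f(z)$) and chaining the inequality $f(z) - f(w) \geq M(z-w)$ correctly across all the iterates.
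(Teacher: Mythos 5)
The step where you compute $f^\omega(z)=z$ on $N^c(f)$ has a genuine gap: you take $M\in\partial f(w)$ for $w:=f^\omega(z)$, invoke Theorem \ref{thm:4.5} on $\partial f(w)$, and set $\mathcal{G}^c(\partial f(w))=\mathcal{G}^c(f)$. Both of these require more than you have established. Theorem \ref{thm:4.5} only applies to sets of \emph{stable} nonnegative matrices, and $\partial f(w)$ consists of stable matrices precisely when $w$ satisfies condition $(vi)$ of Proposition \ref{prop:3.1}; since $(vi)$ is implied by, but not equivalent to, t-stability, and $w$ is just some fixed point produced as a decreasing limit, you have no reason to believe this. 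Likewise $\mathcal{G}^c(\partial f(w))=\mathcal{G}^c(f)$ is exactly Lemma \ref{lem:5.1}, which is stated and proved only for $w$ \emph{t-stable}. Nothing in the hypotheses forces the limit $w=f^\omega(z)$ to be t-stable, and in fact the lemma as stated does not claim it is. So the "subtlety" you flagged — choosing $\partial f(w)$ rather than $\partial f(z)$ — points you in the wrong direction.

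The fix is to differentiate at the t-stable fixed point $v$ you already have, which is what the paper does. With $Q\in\partial f(v)$ chosen via Theorem \ref{thm:4.5} so that $\mathcal{G}^c(Q)=\mathcal{G}^c(f)$, the chain $z-v\geq f(z)-v=f(z)-f(v)\geq Q(z-v)$ together with Proposition \ref{prop:4.4} yields $(Q(z-v))_{C\cup D}=(z-v)_{C\cup D}$, hence $f(z)=z$ on $C\cup D\supseteq N^c(f)$; replacing $z$ by $f^k(z)$ (each of which is again in $\mathcal{E}^+(f)$) gives $f^k(z)=z$ on $N^c(f)$ for every $k$, and the claim follows by passing to the limit. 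This is also shorter than your route: you do not need the Perron--Frobenius dichotomy on $(z-w)_{C_i}$ or the constancy-in-$k$ argument, because the equality $f(z)=z$ on $N^c(f)$ is obtained directly for each iterate. (Your constancy argument would in fact go through once the subdifferential is taken at $v$, but it is extra work.) The remaining parts of your proposal — decreasing orbit, lower bound from Proposition \ref{prop:3.1}, continuity giving a fixed point in the limit, pointwise-limit arguments for convexity and monotonicity, and the projection/surjectivity remarks — are correct and match the paper; you should also note, as the paper does, that Hypothesis \ref{hyp:7.0} ($\mathcal{D}$ downward) is what guarantees the limit stays in $\mathcal{D}$.
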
 
\begin{proof}
Since $f\colon\mathcal{D}\to\mathcal{D}$ has a t-stable fixed point, all orbits of $f$ are bounded from below by Proposition \ref{prop:3.1}. 
Therefore, $f^\omega(z)=\lim_{k\to\infty} f^k(z)$ exists for all $z\in\mathcal{E}^+(f)$, 
as $(f^k(z))_k$ is a decreasing sequence and $\mathcal{D}$ is downward. 
By continuity of $f$, $f^\omega(z)$ is a fixed point of $f$. 

Let $v$ be a t-stable fixed point of $f$ and $z\in\mathcal{E}^+(f)$. 
By Theorem \ref{thm:4.5} there exists $Q\in\partial f(v)$ such that $\mathcal{G}^c(Q)=\mathcal{G}^c(\partial f(v))=\mathcal{G}^c(f)$. 
We also have that 
\begin{equation}\label{eq:7.1}
z-v\geq f(z)-v=f(z)-f(v)\geq Q(z-v).
\end{equation}
From Proposition \ref{prop:4.4} it follows that $z-v=Q(z-v)$ on $N^c(f)=N^c(Q)$ and hence $z=f(z)$ on $N^c(f)$. Replacing $z$ by $f^k(z)$ in the previous argument gives 
$f^{k+1}(z)=f^k(z)=\ldots = z$ on $N^c(f)$ for all $k\geq 1$. 
Thus, $f^\omega(z)=\lim_{k\to\infty} f^k(z)=z$ on $N^c(f)$. 
Clearly, $f^\omega(x)=x$ if $x\in\mathcal{E}(f)$, so that 
$f^\omega\colon\mathcal{E}^+(f)\to\mathcal{E}(f)$ is onto and 
$(f^\omega)^2=f^\omega$. Moreover, as $f^\omega$ is the pointwise limit of $(f^k)_k$, 
$f^\omega$ is a convex monotone map. 
\end{proof}
The fixed point set $\mathcal{E}(f)$ can be naturally equipped with a binary operation 
$\wedge_f$ that turns $(\mathcal{E}(f),\wedge_f)$ into an inf-semilattice, if 
$f\colon\mathcal{D}\to\mathcal{D}$ has a t-stable fixed point. 
The relation $\wedge_f$ is $\mathcal{E}(f)$ is defined by 
\begin{equation}\label{eq:7.2} 
x\wedge_f y=\lim_{k\to\infty} f^k(x\wedge y).
\end{equation}
We note  that if $x,y\in\mathcal{E}(f)$, then $f(x\wedge y)\leq f(x)=x$ and 
$f(x\wedge y)\leq f(y)=y$, so that $f(x\wedge y)\leq x\wedge y$. 
As $f\colon\mathcal{D}\to\mathcal{D}$ has all its orbits bounded from below and 
$\mathcal{D}$ is downward, the limit (\ref{eq:7.2}) exists. 
To prove that $(\mathcal{E}(f),\wedge_f)$, is an inf-semilattice one has to show that 
$\wedge_f$ is associative, symmetric, and reflexive, which is a simple exercise.  
It also follows from Lemma \ref{lem:7.1} that if we put $C=N^c(f)$ and define 
$r_C\colon\mathcal{E}(f)\to \mathbb{R}^C$ by $r_C(x)=x_C$ for all 
$x\in\mathcal{E}(f)$, then $r_C(\mathcal{E}(f))$ is an inf-semilattice in 
$\mathbb{R}^C$, where $\wedge$ is the infimum operation induced by the partial 
ordering $\leq$ on $\mathbb{R}^C$. 
Indeed, if $x,y\in\mathcal{E}(f)$ and $v\in\mathcal{D}$ is a t-stable fixed point of $f\colon\mathcal{D}\to\mathcal{D}$, then there exists $M\in\partial f(v)$ such that 
$\mathcal{G}^c(M)=\mathcal{G}^c(f)$. As $f(x\wedge y)\leq f(x)=x$ and $f(x\wedge y)\leq f(y)=y$, $f(x\wedge y)\leq x\wedge y$, so that $f^k(x\wedge y)\leq f^{k-1}(x\wedge y)$ for all $k\geq 1$. This implies that 
\[
f^{k-1}(x\wedge y)-v\geq f^k(x\wedge y)-f(v)\geq M(f^{k-1}(x\wedge y)-v).
\] 
By Proposition \ref{prop:4.4} we get that 
\[
f^{k-1}(x\wedge y)-v=f^k(x\wedge y) -f(v)=f^k(x\wedge y)-v
\]
on $C$ for all $k\geq 1$. 
Hence 
\[
r_C(x\wedge_f y)= (x\wedge_f y)_C=\lim_{k\to\infty} f^k(x\wedge y)_C=
(x\wedge y)_C = r_C(x)\wedge r_C(y), 
\]
so $(r_C(\mathcal{E}(f)),\wedge)$ is an inf-semilattice in $\mathbb{R}^C$. 
The difference between $(\mathcal{E}(f),\wedge_f)$ and 
$(r_C(\mathcal{E}(f)),\wedge)$ is illustrated by the following simple example. 
Consider 
\[
P=\left (\begin{matrix} 0 & 1/2 & 1/2\\ 0 & 1 & 0\\ 0 & 0 & 1\end{matrix}\right ). 
\]
So, $P$ is a projection.  
Clearly, $\mathcal{E}(P)=\mathrm{span}\{(1/2,1,0),(1/2,0,1)\}$, 
but $\mathcal{E}(P)$ is not 
an inf-semilattice with respect to $\wedge$, as $(1/2,0,0)=(1/2,1,0)\wedge (1/2,0,1)$ 
is for instance not in $\mathcal{E}(P)$. In this case $N^c(P)=\{2,3\}$ and 
$\mathrm{span}\{(1,0),(0,1)\}$ is an inf-semilattice with respect to $\wedge$. 

Let us now analyse the t-stable fixed point set in more detail. 
We shall prove the following theorem. 
\begin{theorem}\label{thm:7.2} 
If $f\colon \mathcal{D}\to\mathcal{D}$ is a convex monotone map with a t-stable 
fixed point, then $(\mathcal{E}_t(f),\wedge_f)$ is an  inf-semilattice and 
$(r_C(\mathcal{E}_t(f)),\wedge)$ is a convex inf-semilattice in $\mathbb{R}^C$, 
where $C=N^c(f)$.
\end{theorem}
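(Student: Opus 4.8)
\emph{Plan of proof.}

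\emph{Reductions.} The plan is to first peel off the formal parts. Commutativity, associativity and idempotency of $\wedge_f$ on $\mathcal{E}(f)$, and the identity $r_C(x\wedge_f y)=r_C(x)\wedge r_C(y)$ for $x,y\in\mathcal{E}(f)$, are already available from the discussion preceding the statement, and by Theorem~\ref{thm:5.3} the restriction map $r_C$ is injective on $\mathcal{E}_t(f)$, hence an affine bijection of $\mathcal{E}_t(f)$ onto $r_C(\mathcal{E}_t(f))$. Hence it suffices to prove (a) $\mathcal{E}_t(f)$ is closed under $\wedge_f$, and (b) $r_C(\mathcal{E}_t(f))$ is convex in $\mathbb{R}^C$ (the semilattice axioms then being inherited in both cases and $r_C(x)\wedge r_C(y)=r_C(x\wedge_f y)$ lying in $r_C(\mathcal{E}_t(f))$ by~(a)). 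For~(b), given $x,y\in\mathcal{E}_t(f)$ and $\lambda\in[0,1]$, convexity of $f$ gives $\lambda x+(1-\lambda)y\in\mathcal{E}^+(f)$, so by Lemma~\ref{lem:7.1} the fixed point $\tilde z:=f^\omega(\lambda x+(1-\lambda)y)$ satisfies $r_C(\tilde z)=\lambda r_C(x)+(1-\lambda)r_C(y)$, and~(b) follows once $\tilde z$ is t-stable. Since $x\wedge_f y=f^\omega(x\wedge y)$, both~(a) and~(b) are consequences of the single statement: if $x,y$ are t-stable fixed points of $f$ and $w\in\{\,x\wedge y,\ \lambda x+(1-\lambda)y\,\}$, then $f^\omega(w)$ is t-stable.

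\emph{The case $w=x\wedge y$.} Put $z:=x\wedge_f y$, so $z$ is a fixed point with $z\le x$ and $z\le y$; I must show $f'_z$ has all orbits bounded from above (Definition~\ref{def:3.2}). For every $P\in\partial f(z)$ and every fixed point $u$ of $f$, the subgradient inequality $f(u)-f(z)\ge P(u-z)$ gives $P(u-z)\le u-z$; taking $u=x$ and $u=y$ and adding yields $Pc\le c$ for $c:=(x-z)+(y-z)\ge 0$, for all $P\in\partial f(z)$. Taking coordinatewise suprema and using~(\ref{eq:2.3}), $f'_z(c)\le c$, and monotonicity of $f'_z$ gives $0\le (f'_z)^k(c)\le c$ for all $k$. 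Now $c\gg 0$ off $T_0:=\{i:x_i=y_i=z_i\}$ and $c=0$ on $T_0$; so if $T_0=\emptyset$ then $c\gg 0$, every direction is dominated by a multiple of $c$, and (by positive homogeneity and monotonicity of $f'_z$) all orbits of $f'_z$ are bounded from above, i.e.\ $z$ is t-stable.

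\emph{Handling $T_0$ — the main obstacle.} For $i\in T_0$ one has $0=f_i(x)-f_i(z)\ge P_i(x-z)\ge 0$ for $P\in\partial f(z)$, and likewise with $y$, forcing every row $P_i$ with $i\in T_0$, $P\in\partial f(z)$ (and every such $Q_i$ with $Q\in\partial f(x)$) to be supported on $T_0$. Hence, in the splitting $\mathbb{R}^n=\mathbb{R}^{T_0}\times\mathbb{R}^{T_0^c}$, every matrix of $\partial f(z)$ and of $\partial f(x)$ is block lower triangular, so $f'_z(s)=(g(s_{T_0}),e(s_{T_0},s_{T_0^c}))$ and $f'_x(s)=(g_x(s_{T_0}),e_x(s_{T_0},s_{T_0^c}))$ with $g,g_x$ convex monotone positively homogeneous on $\mathbb{R}^{T_0}$. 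For $i\in T_0$, $g(\sigma)_i$ and $g_x(\sigma)_i$ are the directional derivatives at the common point $z_{T_0}$ of the convex functions $\Psi_z:=f_i(\cdot\,,z_{T_0^c})$ and $\Psi_x:=f_i(\cdot\,,x_{T_0^c})$; since $z_{T_0^c}\le x_{T_0^c}$ we get $\Psi_z\le \Psi_x$ while $\Psi_z(z_{T_0})=z_i=x_i=\Psi_x(z_{T_0})$, so (two convex functions touching at $z_{T_0}$) $\partial\Psi_z(z_{T_0})\subseteq\partial\Psi_x(z_{T_0})$ and therefore $g\le g_x$. As $x$ is t-stable, projecting its bounded orbits onto the invariant block $T_0$ shows $0$ is a t-stable fixed point of $g_x$, hence of $g$. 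The remaining (hard) step is to combine this with the bound $(f'_z)^k(c)\le c$ — which, since $c$ vanishes on $T_0$ and is $\gg 0$ on $T_0^c$, says exactly that the subsystem $r\mapsto e(0,r)$ of $f'_z$ on $T_0^c$, driven by zero $T_0$-input, has $0$ t-stable — with the non-expansiveness of that subsystem furnished by Theorem~\ref{thm:6.5} (and the decomposition of Lemma~\ref{lem:6.1} applied to it), so as to control the coupling $e$ between the two blocks and conclude that $(f'_z)^k(s)$ is bounded above for one, hence every, $s\gg 0$. Controlling this coupling is where the real work lies.

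\emph{The case $w=\lambda x+(1-\lambda)y$.} Once~(a) is established, $z_0:=x\wedge_f y$ is t-stable, so translating by $z_0$ replaces $f$ by the convex monotone map $\phi_0:=f(z_0+\cdot)-z_0$ on the open convex downward set $\mathcal{D}-z_0$, for which $0$ is a t-stable fixed point, $a:=x-z_0$ and $b:=y-z_0$ are t-stable fixed points with $a\wedge_{\phi_0}b=0$, and $f^\omega(\lambda x+(1-\lambda)y)-z_0=\phi_0^\omega(\lambda a+(1-\lambda)b)=:\zeta$ with $0\le\zeta\le a\vee b$. Using the subgradient inequalities $P(a-\zeta)\le a-\zeta$ and $P(b-\zeta)\le b-\zeta$ for $P\in\partial\phi_0(\zeta)$ together with the non-expansiveness of $(\phi_0)'_0=f'_{z_0}$ from Theorem~\ref{thm:6.5}, in the block decomposition determined by the coordinates on which $a$, $b$ and $\zeta$ all coincide, one shows $\zeta$ is t-stable for $\phi_0$ (equivalently $\tilde z$ is t-stable for $f$) by the same circle of ideas as in the previous paragraph. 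This yields~(b), and with~(a) completes the proof; the technical heart throughout is the synthesis in the two ``degenerate'' blocks of a subinvariance/Collatz--Wielandt estimate for $\partial f$ at the limit fixed point with the polyhedral non-expansive norm of Theorem~\ref{thm:6.5}.
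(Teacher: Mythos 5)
Your reductions at the start are correct and match the paper: it suffices to show that $f^\omega(w)$ is t-stable when $w=x\wedge y$ or $w=\lambda x+(1-\lambda)y$ with $x,y\in\mathcal E_t(f)$, since the semilattice axioms and the identity $r_C(x\wedge_f y)=r_C(x)\wedge r_C(y)$ are inherited and $r_C$ is injective on $\mathcal E_t(f)$ by Theorem~\ref{thm:5.3}. The subgradient observation $Pc\le c$, hence $f'_z(c)\le c$, with $c=(x-z)+(y-z)\ge 0$, is also sound.

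However, the proof has a genuine gap, and you say so yourself (``the remaining (hard) step\ldots is where the real work lies''). More than that, the block decomposition you chose cannot be made to work as planned. Writing $z_{\mathrm{paper}}:=x\wedge y$ and $w:=x\wedge_f y$, the paper (Lemma~\ref{lem:7.3}) splits the coordinates into $S=\{i\colon w_i<z_{\mathrm{paper},i}\}$ and $E=\{i\colon w_i=z_{\mathrm{paper},i}\}$; your $T_0=\{i\colon x_i=y_i=w_i\}$ is a \emph{strict} subset of $E$ in general (take $x_i<y_i$ with $w_i=x_i$), so $T_0^c\supsetneq S$. On $T_0^c$ your information $f'_w(c)\le c$, $c\gg 0$, gives only non-expansiveness in the $c$-weighted sup-norm. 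That is not enough to control the coupling from the (bounded) $T_0$-block: a non-expansive subsystem driven by a bounded but nonzero input can drift linearly in $k$ (think $u\mapsto u+1$). What makes the coupling absorbable in the paper is the strict estimate $\tau(h^S)<1$ of Lemma~\ref{lem:7.3}, which turns Lemma~\ref{lem:7.4} into a geometric-series bound $\|h^{k+1}(x)_S\|_u\le\alpha^k\|x_S\|_u+\gamma/(1-\alpha)$. You have no such contraction on $T_0^c$, precisely because $T_0^c$ contains the part $E\setminus T_0$ where the map is genuinely only non-expansive.

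The ingredient you are missing on the rest of $E$ is the paper's key inequality (\ref{eq:7.6}): for $i\in E$ (not just $i\in T_0$) one has $w_i=x_i\wedge y_i=f_i(x)\wedge f_i(y)$ together with $w+\epsilon u\le(x+\epsilon u)\wedge(y+\epsilon u)$, which gives $f'_w(u)_i\le\max\{f'_x(u)_i,f'_y(u)_i\}$ and, after iteration, $(f'_w)^k(u)_i\le\max\{(f'_x)^k(u)_i,(f'_y)^k(u)_i\}$. Your comparison $g\le g_x$ on $T_0$ via touching convex functions is a weaker special case of this (it uses only one of $x,y$ and only at coordinates where all three agree), and it does not extend to $E\setminus T_0$. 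Note also that in the convex-combination case the paper does not pass through the meet first; it applies the same $E/S$ machinery directly to $z=\lambda x+(1-\lambda)y\in\mathcal E^+(f)$, obtaining $(f'_w)^k(u)_i\le\lambda(f'_x)^k(u)_i+(1-\lambda)(f'_y)^k(u)_i$ for $i\in E$, which is cleaner than the translation-by-$z_0$ route you sketch. In short: keep your formal reductions, but replace the $T_0/T_0^c$ split by the $S/E$ split of Lemma~\ref{lem:7.3}, establish the contraction $\tau(h^S)<1$ there, prove the max (resp.\ convex) inequality on all of $E$, and then close the argument with Lemma~\ref{lem:7.4}.
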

But first we give two preliminary lemmas. 
\begin{lemma}\label{lem:7.3} 
Let $f\colon \mathcal{D}\to\mathcal{D}$ be a convex monotone map with a 
t-stable fixed point. Let $z\in\mathcal{E}^+(f)$ and $w=f^\omega(z)$ be given by (\ref{eq:7.0}). Write $S=\{i\colon w_i<z_i\}$ and $E=\{i\colon w_i=z_i\}$. 
Then the map $h=f'_w$ can be written in the form 
\begin{equation}\label{eq:7.3} 
h(x_S,x_E)=(h_S(x_S,x_E),h_E(x_E)),
\end{equation}
where $h_S\colon\mathbb{R}^S\times\mathbb{R}^E\to\mathbb{R}^S$ and $h_E\colon\mathbb{R}^E\to\mathbb{R}^E$ are convex positively homogeneous maps. Moreover, the map $h^S\colon\mathbb{R}^S\to\mathbb{R}^S$ given by, $h^S(y)=h_S(y,0)$ for 
$y\in\mathbb{R}^S$, satisfies $\tau(h^S)<1$.
\end{lemma}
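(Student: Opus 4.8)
The plan is to prove the block form of $h=f'_w$ first, by showing that for $i\in E$ the coordinate map $h_i$ does not depend on $x_S$, and then to deduce $\tau(h^S)<1$ by confronting an eigenvector of $h$ supported on $S$ with the fact that the orbit $(f^k(z))_k$ decreases to $w$.

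\emph{Block structure.} First I would observe that for $i\in E$ one has $f_i(z)=z_i=w_i$: the sequence $(f^k(z))_k$ is non-increasing (since $f(z)\le z$ and $f$ is monotone) and converges to $w=f^\omega(z)$ by Lemma \ref{lem:7.1}, so on $E$ it is constant, and in particular $f(z)=z$ there. Put $d=z-w$, so $d\ge 0$, $d_i>0$ for $i\in S$ and $d=0$ on $E$. For $t\in[0,1]$, convexity of $f_i$ on the segment $[w,z]$ together with $f_i(w)=f_i(z)=w_i$ gives $f_i(w+td)\le w_i$, while monotonicity gives $f_i(w+td)\ge f_i(w)=w_i$; for $t<0$ one writes $w$ as a convex combination of $w+td$ (which lies in $\mathcal D$ by Hypothesis \ref{hyp:7.0}) and $z$, and combines convexity with monotonicity to obtain the same equality. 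Hence $f_i(w+td)=w_i$ for all $t\le 1$ and $i\in E$, so the one-sided directional derivatives satisfy $h(d)_i=h(-d)_i=0$ for $i\in E$. Since $d_S\gg 0$, sandwiching an arbitrary $x_S\in\mathbb R^S$ between $-\lambda d_S$ and $\lambda d_S$ and using monotonicity and positive homogeneity of $h$ yields $h_i(x_S,0)=0$ for all $x_S$; then two applications of convexity of $h_i$ — writing $(x_S,x_E)$ as the midpoint of $(2x_S,0)$ and $(0,2x_E)$, and $(0,x_E)$ as the midpoint of $(-2x_S,0)$ and $(2x_S,2x_E)$, and using positive homogeneity — upgrade this to $h_i(x_S,x_E)=h_i(0,x_E)$ for all $x$ and $i\in E$. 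This is the asserted decomposition $h(x_S,x_E)=(h_S(x_S,x_E),h_E(x_E))$, with $h_S$ and $h_E$ inheriting convexity, monotonicity and positive homogeneity from $h$.

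\emph{The bound $\tau(h^S)<1$.} Suppose $h^S(x)=\lambda x$ for some $x\in\mathbb R^S_+\setminus\{0\}$ and some $\lambda\ge 1$. Extending $x$ by zero on $E$ to $\tilde x\in\mathbb R^n_+\setminus\{0\}$, the block form gives $h(\tilde x)=(\lambda x,0)=\lambda\tilde x$, hence $h^k(\tilde x)=\lambda^k\tilde x$ for all $k$. By positive homogeneity we may rescale so that $\tilde x\le d=z-w$, whence $w+\tilde x\le z$ and $w+\tilde x\in\mathcal D$. Since $w$ is a fixed point, inequality (\ref{3.1}) applied at $v=w$ gives $f^k(w+\tilde x)\ge w+h^k(\tilde x)=w+\lambda^k\tilde x$, while monotonicity gives $f^k(w+\tilde x)\le f^k(z)\to w$. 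If $\lambda>1$ the left-hand bound is unbounded from above on the (nonempty, $S$-contained) support of $\tilde x$, contradicting $f^k(z)\to w$; if $\lambda=1$ we get $w+\tilde x\le w$, i.e.\ $\tilde x\le 0$, again impossible. Thus every eigenvalue of the continuous monotone positively homogeneous map $h^S$ with eigenvector in $\mathbb R^S_+\setminus\{0\}$ is strictly less than $1$. Since this eigenvalue set is closed (take a convergent sequence of normalized eigenvectors and use continuity of $h^S$) and nonempty (a Brouwer fixed point argument as in the proof of Theorem \ref{thm:4.5} produces one), its supremum $\tau(h^S)$ is attained, so $\tau(h^S)<1$.

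\emph{Main obstacle.} The delicate point is that $w=f^\omega(z)$ need not be a t-stable fixed point of $f$, so $h=f'_w$ need not have $0$ as a t-stable fixed point and $\partial f(w)$ need not consist of stable matrices; one cannot invoke the material of Section 6 or Proposition \ref{prop:4.2bis} at $w$. The argument above circumvents this by relying only on convexity and monotonicity of $f$, on inequality (\ref{3.1}), and on the defining strict inequality $w_i<z_i$ on $S$, which supplies simultaneously the strictly positive direction $d$ on $S$ needed to remove the dependence of $h_E$ on $x_S$ and the strict decay of $f^k(z)$ on $S$ needed for the spectral estimate. Minor care is needed in rescaling the extended eigenvector $\tilde x$ below $z-w$, and in the compactness step that makes the final inequality strict rather than merely $\tau(h^S)\le 1$.
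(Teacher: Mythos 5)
Your block decomposition argument is essentially the paper's: both hinge on $f_i(z)=z_i$ for $i\in E$ (giving $h(z-w)_E=0$), then sandwich a general $x_S$ between multiples of $\pm d$ using $d_S\gg0$, and finish with a convexity/positive-homogeneity trick to eliminate the $x_S$-dependence of $h_E$. The small difference in phrasing (you show $h_i(x_S,0)=0$ directly; the paper first shows $h(y)_E\leq0$ for $y_E=0$ and then uses a one-sided midpoint inequality) is cosmetic.

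Where you genuinely diverge is in the proof of $\tau(h^S)<1$, and your route is arguably cleaner. The paper first gets $\tau(h^S)\leq 1$ from Nussbaum's Collatz--Wielandt identity applied to $h^S(d_S)\leq d_S$, and then rules out $\tau(h^S)=1$ via Perron--Frobenius combinatorics: it picks a matrix $P\in\partial f(w)$ attaining the sup at the eigenvector, locates a final (hence basic) class $F$ of $P$ restricted to the support of the eigenvector, extracts a positive left eigenvector of $P_{FF}$ to force $f(z)_F=z_F$, iterates to get $f^k(z)_F=z_F$, and contradicts $F\subseteq S$. Your argument replaces all of this with a direct dynamical comparison: extend a putative eigenvector $x$ with eigenvalue $\lambda\geq 1$ to $\tilde x$, scale so that $w+\tilde x\leq z$, and squeeze $w+\lambda^k\tilde x\leq f^k(w+\tilde x)\leq f^k(z)\to w$ — unbounded growth if $\lambda>1$, and $\tilde x\leq 0$ if $\lambda=1$. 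This avoids Perron--Frobenius theory, final classes, and the left-eigenvector computation entirely, at the cost of the small closedness/compactness argument you supply to pass from ``no eigenvalue $\geq 1$'' to ``$\sup<1$''; one could also shortcut that last step by citing $\tau=\tau'$ as the paper does to get $\tau(h^S)\leq1$ from $h^S(d_S)\leq d_S$, and then only needing the eigenvector existence at $\lambda=1$. One point worth making explicit when you invoke (\ref{3.1}) at $v=w$: the inequality $f^k(w+\tilde x)\geq w+(f'_w)^k(\tilde x)$ is most safely obtained in one step as the subgradient inequality for the convex map $f^k$ at the fixed point $w$, together with $(f^k)'_w=(f'_w)^k$, rather than by iterating $f(v+x)\geq v+f'_v(x)$, since for $\lambda>1$ the intermediate points $w+\lambda^j\tilde x$ are not controlled by $z$.
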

\begin{proof}
Since $h(x)=\sup_{P\in\partial f(w)} Px$ for all $x\in\mathbb{R}^n$ and $f(z)\leq z$, we get that 
\begin{equation}\label{eq:7.4} 
z-w\geq f(z)-w\geq h(z-w).
\end{equation}
As $z-w\geq 0$ and $z=w$ on $E$, we find that $h(z-w)=0$ on $E$. 
Now suppose that  that $y\in\mathbb{R}^n$ is such that $y_E=0$. 
Then there exists $\lambda >0$ such that $y\leq \lambda (z-w)$, as $z-w\gg 0$ 
on $S$. 
This implies that $h(y)\leq \lambda h(z-w)\leq \lambda (z-w)$, and hence $h(y)\leq 0$ 
on $E$ if $y=0$ on $E$. 
Now let $y_S,y'_S\in\mathbb{R}^S$ and $y_E\in\mathbb{R}^E$, and remark that 
\[
h(y_S,y_E)_E\leq h(y_S-y'_S,0)_E+h(y'_S,y_E)_E\leq h(y'_S,y_E)_E,
\]
as $h$ is positively homogeneous and convex. 
Thus, $h(y_S,y_E)_E$ is independent of $y_S$ and hence $h$ can written in the form 
(\ref{eq:7.3}).

Let $h^S\colon\mathbb{R}^S\to\mathbb{R}^S$ be given by $h^S(y)=h_S(y,0)$ 
for all $y\in \mathbb{R}^S$ and denote $v=(z-w)_S$. 
Then $v\gg 0$ and by (\ref{eq:7.4}), 
\[
v\geq h(z-w)_S\geq h^S(v).
\] 
This implies that $\tau(h^S)\leq 1$, as $\tau(h^S)=\tau'(h^S)$ by 
\cite[Theorem 3.1]{LAA}. 
(Here $\tau(\cdot)$ and $\tau'(\cdot)$ are as in (\ref{eq:6.1}) and (\ref{eq:6.2}), respectively. 

Assume by way of contradiction that $\tau(h^S)=1$. Then there exists $u\in\mathbb{R}^S_+$ such that $u\neq 0$ and $h^S(u)=u$. 
Then $\eta=(u,0)\in\mathbb{R}^S\times \mathbb{R}^E$ satisfies $h(\eta)=\eta$ by (\ref{eq:7.3}). 
Since $\partial f(w)$ is a compact rectangular set of nonnegative matrices and 
$h( \eta)=\sup_{P\in\partial f(w)} P\eta$, there exists $P\in\partial f(w)$ such that 
$\eta = h(\eta)=P\eta$. 
Let $S'=\{i\colon \eta_i\neq 0\}$ and remark that $S'\subseteq S$ and $S'$ is a union 
of classes of $P$. To proceed we need the notion of a final class. 
A class of a nonnegative matrix is called final if it has no access to any other class. 
It is known (see \cite[Theorem 3.10]{BP}) that a nonnegative matrix $M$ has a positive eigenvector if, and only if, each final class of $M$ is basic. 
Clearly $P_{S'S'}$ has a final class, say $F$. 
As $P_{S'S'}\eta_{S'}=\eta_{S'}$ and $\eta_{S'}\gg 0$, we find that $F$ is a basic class 
of $P_{S'S'}$, and hence $\rho(P_{FF})=\rho(P_{S'S'})=1$. 
By (\ref{eq:7.4}) we have that 
\begin{equation}\label{eq:7.5}
(z-w)_F\geq (f(z)-w)_F\geq h(z-w)_F\geq (P(z-w))_F\geq P_{FF}(z-w)_F.
\end{equation}
By the Perron-Frobenius theorem there exists $m\gg 0$ in $\mathbb{R}^F$ such that 
$mP_{FF} = m $. This implies that $m(z-w)_F\geq mP_{FF}(z-w)_F = m(z-w)_F$, and 
hence $(z-w)_F = P_{FF}(z-w)_F$.  Thus, $f(z)_F=z_F$ by (\ref{eq:7.5}). 
Similarly we deduce that $f^k(z)_F=z_F$ for all $k\geq 1$. 
Indeed, 
\[
z-w\geq f^k(z)-w\geq (f^k)'_w(z-w)=(f'_w)^k(z-w)=h^k(z-w)
\]
and 
\[
h^k(z-w)_F\geq (P^k)_{FF}(z-w)_F\geq (P_{FF})^k(z-w)_F.
\]

Recall that $w_F=\lim_{k\to\infty} f^k(z)_F$ and therefore $w_F=z_F$. 
But this implies that $F\subseteq E$, which contradicts the fact that $F\subseteq S'\subseteq S$. Thus, we conclude that $\tau(h^S)<1$.
\end{proof}
\begin{lemma}\label{lem:7.4} 
Let $h$, $h_E$, $h^S$, $S$ and $E$ be as in Lemma \ref{lem:7.3}. 
If $h_E\colon\mathbb{R}^E\to\mathbb{R}^E$ has all its orbits bounded from above, then $h$ has all its orbits bounded from above.
\end{lemma}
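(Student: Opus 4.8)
The plan is to exploit the block-triangular form of $h=f'_w$ from Lemma~\ref{lem:7.3} together with $\tau(h^S)<1$ in order to dominate, coordinatewise, an arbitrary orbit of $h$ by an orbit that is visibly bounded from above. Fix $x=(x_S,x_E)\in\mathbb{R}^S\times\mathbb{R}^E$ and set $x^k=h^k(x)$. Since $h(x_S,x_E)=(h_S(x_S,x_E),h_E(x_E))$, the $E$-block evolves autonomously, $x^k_E=h_E^k(x_E)$, so by hypothesis there is $\beta\in\mathbb{R}^E$ with $x^k_E\le\beta$ for all $k\ge 0$. Monotonicity of $h_S$ in its second argument then gives $x^{k+1}_S=h_S(x^k_S,x^k_E)\le h_S(x^k_S,\beta)$, so defining the monotone convex map $g\colon\mathbb{R}^S\to\mathbb{R}^S$ by $g(y)=h_S(y,\beta)$ one obtains, by induction on $k$ (feeding the bound established at stage $k$ into the monotonicity step at stage $k+1$), that $x^k_S\le g^k(x_S)$ for every $k$. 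It therefore suffices to show that $g$ has all orbits bounded from above.

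For this I would first linearise the dependence on $\beta$: since $h_S$ is convex and positively homogeneous it is subadditive, hence $g(y)=h_S\big((y,0)+(0,\beta)\big)\le h_S(y,0)+h_S(0,\beta)=h^S(y)+c$, where $c:=h_S(0,\beta)\in\mathbb{R}^S$ is a fixed finite vector. Putting $\tilde g(y)=h^S(y)+c$, another induction using monotonicity gives $g^k(y)\le\tilde g^k(y)$ for all $k$, so it is enough to bound the orbits of $\tilde g$ from above. Now I invoke $\tau(h^S)<1$: since $\tau(h^S)=\tau'(h^S)$ by \cite[Theorem 3.1]{LAA} and this common value is $<1$, there exist $0<\lambda<1$ and $p\gg 0$ in $\mathbb{R}^S$ with $h^S(p)\le\lambda p$. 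Because $p\gg 0$ we may choose $\mu>0$ so large that $x_S\le\mu p$ and $c\le(1-\lambda)\mu p$. Then, using positive homogeneity and monotonicity of $h^S$,
\[
\tilde g(\mu p)=\mu\, h^S(p)+c\le\lambda\mu p+(1-\lambda)\mu p=\mu p,
\]
and monotonicity of $\tilde g$ upgrades this to $\tilde g^k(x_S)\le\mu p$ for all $k$. Tracing the inequalities back, $x^k_S\le g^k(x_S)\le\tilde g^k(x_S)\le\mu p$ and $x^k_E\le\beta$ for all $k$, so $\mathcal{O}(x;h)$ is bounded above by $(\mu p,\beta)$, which is the claim.

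The argument is essentially bookkeeping once these reductions are in place; the only point requiring genuine care is the first reduction $x^k_S\le g^k(x_S)$, where one must combine the previously established bound with monotonicity of $g$ at each step, and where it matters that $h_S$ — being a block of the monotone convex map $f'_w$ — is itself monotone, convex, positively homogeneous and finite-valued. I do not anticipate any deeper obstacle.
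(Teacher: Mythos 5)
Your proof is correct and follows essentially the same approach as the paper: both exploit the block-triangular form from Lemma~\ref{lem:7.3}, bound the autonomously evolving $E$-block from above, and then control the $S$-block using subadditivity (from convexity and positive homogeneity) together with a Collatz--Wielandt vector $p\gg 0$ satisfying $h^S(p)\le\lambda p$ for some $\lambda<1$, which exists because $\tau(h^S)=\tau'(h^S)<1$. The only stylistic difference is in the final step: you exhibit a super-invariant vector $\mu p$ for the dominating map $\tilde g(y)=h^S(y)+c$ directly, whereas the paper sets up a weighted-sup-norm contraction inequality $\|h^{k+1}(x)_S\|_u\le\alpha\|h^k(x)_S\|_u+\gamma$ and iterates — the two are interchangeable.
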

\begin{proof}
Since $h\colon\mathbb{R}^n\to\mathbb{R}^n$ is monotone, it suffices to prove that 
$\mathcal{O}(x;h)$ is bounded from above for all $x\in\mathbb{R}^n_+$. 
As $h$ can be written in the form (\ref{eq:7.3}), we know that 
$\{h^k(x)_E\colon k\geq 0\}=\{h_E^k(x_E)\colon k\geq 0\}$ is bounded from above. 
It therefore remains to be shown that $\{h^k(x)_S\colon k\geq 0\}$ is bounded 
from above. 
Since $\tau'(h^S)=\tau(h^S)<1$, there exist $u\in\mathbb{R}^S_+$ and $0<\alpha <1$ such that $u\gg 0$ and $h^S(u)\leq \alpha u$. 
For $y\in\mathbb{R}^S$ we define a norm by 
\[
\|y\|_u=\max_{i\in S} |y_i/u_i|.
\]
For each $y\in\mathbb{R}^S_+$ we have that $y\leq \|y\|_u u$, so that  
\[
\|h^S(y)\|_u\leq \|h^S(\|y\|_u u)\|_u = \|h^S(u)\|_u\|y\|_u\leq \alpha\|y\|_u,
\]
as $h^S$ is positively homogeneous and monotone. 
Since $\{h^k(x)_E\colon k\geq 0\}$ is bounded from above, there exists $v\gg 0$ 
in $\mathbb{R}^E$ such that $h^k(x)_E\leq v$ for all $k\geq 0$. 
This implies that 
\[
h(0,h^k(x)_E)_S\leq h(0,v)_S\leq \gamma u
\]
for some $\gamma >0$. 
Now using the fact that $h$ is positively homogeneous and convex, we get that 
\[
0\leq h^{k+1}(x)_S\leq h(h^k(x)_S,0)_S+h(0,h^k(x)_E)_S\leq h^S(h^k(x)_S) 
+\gamma u, 
\] 
so that 
\[
\|h^{k+1}(x)_S\|_u\leq \|h^S(h^k(x)_S)\|_u +\gamma\leq 
\alpha \|h^k(x)_S\|_u +\gamma.
\]
By induction we obtain 
\[
\|h^{k+1}(x)_S\|_u\leq \alpha^k\|x_S\|_u +\frac{\gamma}{1-\alpha},
\]
which shows that $\{h^k(x)_S\colon k\geq 0\}$ is bounded from above. 
\end{proof}
Let us now prove Theorem \ref{thm:7.2} 
\begin{proof}[Proof of Theorem \ref{thm:7.2}]
To prove that $(\mathcal{E}_t(f),\wedge_f)$ is an inf-semilattice, it suffices to show that if $x,y\in\mathcal{E}_t(f)$, then $x\wedge_f y\in\mathcal{E}_t(f)$, as $(\mathcal{E}(f),\wedge_f)$ is an inf-semilattice. 
So, suppose that $x,y\in\mathcal{E}_t(f)$. 
Put $z=x\wedge y$ and let $w= x\wedge_f y$. 
We need to show that $h=f'_w\colon\mathbb{R}^n\to\mathbb{R}^n$ has all its orbits bounded from above by Proposition \ref{prop:3.1}. 
By Lemma \ref{lem:7.3} we can write $h$ in the form of (\ref{eq:7.3}), since $f(z)\leq z$. 
We also get that $\tau(h^S)<1$.   
By Lemma \ref{lem:7.4} it is sufficient to prove that $h_E\colon\mathbb{R}^E\to
\mathbb{R}^E$ has all its orbit bounded from above. 
For each $i\in E$ we have that 
\[
f_i(w)=w_i=x_i\wedge y_i=f_i(x)\wedge f_i(y),
\]
as $x,y,w\in\mathcal{E}(f)$. Note that $w\leq x\wedge y$ implies that 
\[
w+\epsilon u\leq (x\wedge y)+\epsilon u= (x+\epsilon u)\wedge (y+\epsilon u)
\]
for all $u\in\mathbb{R}^n$ and $\epsilon >0$. Now let $u\in\mathbb{R}^n$ and $\epsilon >0$ sufficiently small so that $x+\epsilon u, y+\epsilon u\in\mathcal{D}$. 
Then 
\[
f_i(w+\epsilon u)\leq f_i(x+\epsilon u)\wedge f_i(y+\epsilon u), 
\] 
since $f_i$ is monotone, and hence 
\begin{eqnarray*}
f_i(w+\epsilon u)-f_i(w)& \leq & 
f_i(x+\epsilon u)\wedge f_i(y+\epsilon u)- f_i(x)\wedge f_i(y)\\
 & \leq & \max\{ f_i(x+\epsilon u)-f_i(x),  f_i(y+\epsilon u)- f_i(y)\} 
\end{eqnarray*}
for all $i\in E$. This implies that 
\begin{equation}\label{eq:7.6} 
f'_w(u)_i=\lim_{\epsilon \downarrow 0} \frac{f_i(w+\epsilon u)-f_i(w)}{\epsilon}
\leq \max\{ f'_x(u)_i,f'_y(u)_i\}
\end{equation}
for all $i\in E$. 

Applying the same argument for the map $f^k$ and using the fact that 
$(f^k)'_v=(f'_v)^k$ for all $v\in\mathcal{E}(f)$, we obtain that 
\[
(f'_w)^k(u)_i\leq \max \{(f'_x)^k(u)_i, (f'_y)^k(u)_i\}
\] 
for all $u\in\mathbb{R}^n$, $i\in E$ and $k\geq 1$. 
But $x$ and $y$ are t-stable and therefore 
\[
(h_E)^k(s)=(h^k(0,s))_E = (f'_w)^k(0,s)_E
\]
is bounded from above as $k\to\infty$ for all $s \in \mathbb{R}^E$. 
Thus, we conclude that $w$ is a t-stable fixed point of $f$. 

Recall that $(r_C(\mathcal{E}(f)),\wedge)$ is an inf-semilattice in $\mathbb{R}^C$, where $C=N^c(f)$, and $r_C(x\wedge_f y)=r_C(x)\wedge r_C(y)$ for all $x,y\in\mathcal{E}(f)$. 
This implies that $(r_C(\mathcal{E}_t(f)),\wedge)$ is a  inf-semilattice in 
$\mathbb{R}^C$. 
To show that $r_C(\mathcal{E}_t(f))$ is convex we apply the same technique as before. 

Let $x,y\in\mathcal{E}_t(f)$ and $0<\lambda < 1$. 
Put $z=\lambda x+(1-\lambda)y$. Since $f$ is convex, 
\[
f(z)\leq \lambda f(x)+(1-\lambda)f(y)=\lambda x+(1-\lambda)y =z.
\]
Hence $w=f^\omega(z)=\lim_{k\to\infty} f^k(z)$ exists and is a fixed point of $f$ 
with $w_C=z_C=\lambda x_C+(1-\lambda)y_C$. 
We need to show that $w$ is t-stable. 
Let $h=f'_w$ and recall that is suffices to 
show that $h_E$ all its orbit bounded from above by Lemma \ref{lem:7.4}. 
Let $i\in E$  and note that as $x,y,w\in\mathcal{E}(f)$, 
\[
f^k_i(w) = w_i =\lambda x_i+(1-\lambda)y_i=\lambda f_i^k(x)+(1-\lambda)f_i^k(y).
\]
Clearly $w\leq \lambda x+(1-\lambda)y$, so that 
\[
w+\epsilon u\leq \lambda(x+\epsilon u)+(1-\lambda)(y+\epsilon u)
\]
for all $u\in\mathbb{R}^n$ and $\epsilon >0$. Now fix $u\in\mathbb{R}^n$. Then 
for all $\epsilon >0$ sufficiently small, we have that 
\[
f^k_i(w+\epsilon u)\leq f^k_i(\lambda(x+\epsilon u)+(1-\lambda)(y+\epsilon u))
\leq \lambda f^k_i(x+\epsilon u) +(1-\lambda)f_i^k(y+\epsilon u)
\]
for all $k\geq 1$. Thus, 
\[
f_i^k(w+\epsilon u)-f^k_i(w)\leq \lambda(f^k_i(x+\epsilon u)-f_i^k(x))
+(1-\lambda)(f^k_i(y+\epsilon u)-f^k_i(y)) 
\]
and hence 
\begin{eqnarray*}
(f^k)'_w(u)_i &\leq & \lambda (f^k)'_x(u)_i+(1-\lambda)(f^k)'_y(u)_i\\
                     & \leq &  \lambda (f'_x)^k(u)_i+(1-\lambda)(f'_y)^k(u)_i
\end{eqnarray*}
for all $k\geq 1$ and $i\in E$. As $x,y\in\mathcal{E}_t(f)$, the right hand side 
is bounded from above as $k\to\infty$. 
Therefore $((f'_w)^k(0,s)_E)_k=((h_E)^k(s))_k$ 
is bounded from above for all $s\in\mathbb{R}^E$, which shows that  $w$ is 
t-stable. 
\end {proof}
We note that if  $\mathcal{E}_t(f)$ is compact, then it is a connected set. To show this it suffices to prove that $r_C^{-1}$ is continuous, as $r_C(\mathcal{E}_t(f))$ is convex. 
Note that $r_C$ is one-to-one on $\mathcal{E}_t(f)$ by Theorem  \ref{thm:5.3}. So, let $y_k\to y$ in $r_C(\mathcal{E}_t(f))$, $r_C(x_k)=y_k$ for all $k$, and $r_C(x)=y$. 
If $(x_{k_i})_i$ is a subsequence of $(x_k)_k$ and  $x_{k_i}\to z$, then $z\in\mathcal{E}_t(f)$ by compactness. Moreover, $r_C(z)=y$, which implies that $z_C=x_C$. Thus, by Theorem \ref{thm:5.3}, $z=x$, and hence $r_C^{-1}$ is continuous. 
 
Another consequence of Theorem \ref{thm:7.2} is the following. Recall that $\mathcal{O}(\xi)=\{\xi, f(\xi),\ldots,f^{p-1}(\xi)\}\subseteq \mathcal{D}$ is a \emph{Lyapunov stable periodic orbit} of $f$ if for all  neighbourhoods $U_i$ of $f^i(\xi)$, $i=0,\ldots,p-1$, there exist  neighbourhoods $V_i$ of $f^i(\xi)$, $i=0,\ldots,p-1$, such that $f^{kp+i}(y)\in U_i$ for all $y\in V_i$ for all $k\geq 0$.    
\begin{corollary}\label{cor:7.4.1}
If $f\colon\mathcal{D}\to\mathcal{D}$ is a convex monotone map with a t-stable periodic point $\xi\in \mathcal{D}$, then $f$ has a t-stable fixed point. Moreover, 
if $\xi$ has a Lyapunov stable orbit, then $f$ has a Lyapunov stable fixed point.   
\end{corollary}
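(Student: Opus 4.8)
The plan is to take as the sought fixed point $w:=g^\omega(z)$, where $g:=f^p$ with $p$ the period of $\xi$, and $z:=\bigwedge_{i=0}^{p-1}f^i(\xi)$; almost everything then reduces to Lemma~\ref{lem:7.1}, Theorem~\ref{thm:7.2} and Proposition~\ref{prop:3.1} applied to the convex monotone map $g$, together with a bookkeeping dictionary between the dynamics of $f$ and that of $g=f^p$.

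\emph{First assertion.} Since $\xi$ is a t-stable periodic point of period $p$ it is a t-stable fixed point of $g=f^p$, and by the remark following Definition~\ref{def:3.2} each $\xi_i:=f^i(\xi)$ is a t-stable fixed point of $g$. As $\mathcal{D}$ is downward (Hypothesis~\ref{hyp:7.0}) and $z\le\xi$, we have $z\in\mathcal{D}$; and since $f$ is monotone with $f^p(\xi)=\xi$,
\[
f(z)\le\bigwedge_{i=0}^{p-1}f(\xi_i)=\bigwedge_{i=0}^{p-1}\xi_{i+1}=z,
\]
so $z\in\mathcal{E}^+(f)\subseteq\mathcal{E}^+(g)$. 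By Lemma~\ref{lem:7.1} applied to $g$, $w=g^\omega(z)=\lim_k g^k(z)$ exists and is a fixed point of $g$; moreover $(f^k(z))_k$ is coordinatewise decreasing with subsequence $(f^{pk}(z))_k=(g^k(z))_k\to w$, hence $\lim_k f^k(z)=w$ and $f(w)=\lim_k f^{k+1}(z)=w$, so $w\in\mathcal{E}(f)$. To see $w$ is t-stable, I would use the elementary fact (immediate from $x\wedge_g y=\lim_k g^k(x\wedge y)$ and monotonicity of $g$) that $x\wedge_g y$ is the largest element of $\mathcal{E}(g)$ lying below both $x$ and $y$; by induction $\xi_0\wedge_g\cdots\wedge_g\xi_{p-1}$ is the largest element of $\mathcal{E}(g)$ below every $\xi_i$, and the same characterization identifies $w=g^\omega(z)$ with it. Thus $w\in\mathcal{E}_t(g)$ by Theorem~\ref{thm:7.2}. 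Finally $g'_w=(f^p)'_w=(f'_w)^p$, and every orbit of $f'_w$ is a finite union, over the residues $j$ mod $p$, of orbits of $(f'_w)^p$ started at $(f'_w)^j(x)$; hence $(f'_w)^p$ has all orbits bounded from above iff $f'_w$ does, so $w\in\mathcal{E}_t(g)$ gives $w\in\mathcal{E}_t(f)$.

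\emph{Second assertion.} Assume in addition that $\mathcal{O}(\xi)$ is a Lyapunov stable periodic orbit. Applying the definition with $U_i=\mathcal{D}$ for $i\ge1$ shows that for every neighbourhood $U$ of $\xi$ there is a neighbourhood $V$ of $\xi$ with $g^k(V)=f^{kp}(V)\subseteq U$ for all $k\ge0$; i.e.\ $\xi$ is a Lyapunov stable fixed point of $g$, so by Proposition~\ref{prop:3.1} for $g$ there is a neighbourhood $\mathcal{V}_\xi\subseteq\mathcal{D}$ of $\xi$ all of whose points have bounded $g$-orbit. The key point is that $w\le z\le\xi$: for $y$ in a small enough ball around $w$ the vector $y\vee\xi$ lies in $\mathcal{V}_\xi$, since $0\le(y\vee\xi)_i-\xi_i=\max\{y_i-\xi_i,0\}\le|y_i-w_i|$ because $w_i\le\xi_i$. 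Then $\mathcal{O}(y\vee\xi;g)$ is bounded and $y\le y\vee\xi$ forces $\mathcal{O}(y;g)$ to be bounded from above, so by Proposition~\ref{prop:3.1} $w$ is a Lyapunov stable fixed point of $g$. To pass back to $f$, pick a neighbourhood $\mathcal{V}_w$ of $w$ with bounded $g$-orbits and set $\mathcal{V}=\bigcap_{j=0}^{p-1}(f^j)^{-1}(\mathcal{V}_w)$, a neighbourhood of $w$ since $f^j(w)=w$; for $x\in\mathcal{V}$ the orbit $\mathcal{O}(x;f)=\bigcup_{j=0}^{p-1}\mathcal{O}(f^j(x);g)$ is a finite union of bounded sets, so Proposition~\ref{prop:3.1} for $f$ gives that $w$ is a Lyapunov stable fixed point of $f$.

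The main obstacle is choosing the candidate correctly and managing the interplay between $f$ and $g=f^p$: one must recognize $w=g^\omega(\bigwedge_i f^i(\xi))$ as the $\wedge_g$-infimum of the orbit in order to invoke Theorem~\ref{thm:7.2}, repeatedly translate boundedness/stability statements between $f$- and $g$-dynamics via residue classes modulo $p$, and—for the Lyapunov part—spot the perturbation trick $y\mapsto y\vee\xi$, which succeeds precisely because $w\le\xi$.
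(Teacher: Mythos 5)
Your proof is correct and follows essentially the same route as the paper: the same candidate $w=g^\omega\bigl(\bigwedge_{i=0}^{p-1}f^i(\xi)\bigr)$ with $g=f^p$, t-stability via Theorem~\ref{thm:7.2} (identifying $w$ with the iterated $\wedge_g$-infimum of the $\xi_i$) followed by the residue-class reduction from $g'_w=(f'_w)^p$ to $f'_w$, and Proposition~\ref{prop:3.1} for the Lyapunov part. The only difference is cosmetic and sits in the last step: the paper passes directly to $f$ by taking the downward closure $W=\bigcup_i\{x\in\mathcal{D}\colon x\le y\text{ for some }y\in V_i\}$, whereas you first establish Lyapunov stability of $w$ for $g$ via the $y\vee\xi$ comparison and then transfer to $f$ through $\bigcap_j (f^j)^{-1}(\mathcal{V}_w)$; both exploit $w\le\xi_i$ and monotonicity in the same way.
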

\begin{proof}
Let $\xi\in\mathcal{D}$ be a t-stable periodic point of $f$ with period $p$. Put $g= f^p$ and note that $f^k(\xi)$ is a t-stable fixed point of $g$ for all $0\leq k<p$. 
Let $z=\xi\wedge f(\xi)\wedge \cdots \wedge f^{p-1}(\xi)$. Clearly $g(z)\leq z$ and hence $u:=g^\omega(z)$ exists and is a t-stable fixed point of $g$ by Theorem \ref{thm:7.2}. As $f^k(z)\leq z$ for all $0\leq k<p$, we have that 
\[
f^k(u) =f^k(g^\omega(z))=g^\omega(f^k(z))\leq u
\]
for all $0\leq k<p$. In particular, $f(u)\leq u$ and 
\[
u=g(u)= f^p(u)\leq f(u) \leq u,
\]
so that $f(u)=u$. Moreover, as $(f'_u)^p = g'_u$ and $f'_u$ is continuous, we conclude that $u$ is a t-stable fixed point of  $f$.  

Now assume that $\xi$ has a Lyapunov stable orbit. For each $i=0,\ldots,p-1$ there exists a neighbourhood $V_i$ of $f^i(\xi)$ such that the orbit of each $y\in V_i$ is bounded 
from above.  Let $W_i=\{x\in\mathcal{D}\colon x\leq y\mbox{ for some }y\in V_i\}$ and put $W =\cup_{i=0}^{p-1} W_i$. Note that, as $f^i(\xi)\in V_i$, $u\in W$ and $W$ is a neighbourhood of $u$. As $f$ is monotone and the orbit of each $y\in V_i$, $i=0,\ldots,p-1$, is bounded from above under $f$, the orbit of each $w\in W$ is bounded from above, and hence $u$ is a Lyapunov stable fixed point.  
\end{proof}
We also have the following result. 
\begin{lemma}\label{lem:7.4.2}
Suppose $f\colon\mathcal{D}\to\mathcal{D}$ is a convex monotone map and $v$ and $w$ are  fixed points with $w\leq v$. If $w\ll v$ or $v$ Lyapunov stable,  then $w$ is Lyapunov stable.  
\end{lemma}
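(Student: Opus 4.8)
The plan is to reduce Lyapunov stability of $w$ to the criterion supplied by Proposition~\ref{prop:3.1}: it suffices to exhibit a neighbourhood $\mathcal{V}\subseteq\mathcal{D}$ of $w$ such that every orbit of a point in $\mathcal{V}$ is bounded from above, since for a convex monotone map this implication $(iii)\Rightarrow(i)$ holds. As $\mathcal{D}$ is open and $w\in\mathcal{D}$, for all sufficiently small $\delta>0$ the box $\mathcal{V}_\delta=\{x\in\mathbb{R}^n\colon\|x-w\|_\infty<\delta\}$ lies in $\mathcal{D}$, and every $x\in\mathcal{V}_\delta$ satisfies $x\leq w+\delta\mathds{1}$. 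Monotonicity of $f$ then gives $f^k(x)\leq f^k(w+\delta\mathds{1})$ for all $k\geq 0$, so the whole problem reduces to showing that $\mathcal{O}(w+\delta\mathds{1};f)$ is bounded from above for one small enough $\delta>0$.

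Next I handle the two hypotheses in turn. If $w\ll v$, then $v-w\gg 0$, so $\epsilon:=\min_i(v_i-w_i)>0$ and $v-w\geq\epsilon\mathds{1}$; choosing $\delta<\epsilon$ yields $w+\delta\mathds{1}\leq v$, and since $v$ is a fixed point, $f^k(w+\delta\mathds{1})\leq f^k(v)=v$ for every $k$, which is the required upper bound. If instead $v$ is Lyapunov stable, I apply Proposition~\ref{prop:3.1} at $v$ to obtain a neighbourhood $\mathcal{U}\subseteq\mathcal{D}$ of $v$ all of whose points have orbits bounded from above; because $v+\delta\mathds{1}\to v$ as $\delta\downarrow 0$ and $\mathcal{D}$ is open, for $\delta$ small enough $v+\delta\mathds{1}\in\mathcal{U}$, so $\mathcal{O}(v+\delta\mathds{1};f)$ admits an upper bound $M\in\mathbb{R}^n$. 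Since $w\leq v$ gives $w+\delta\mathds{1}\leq v+\delta\mathds{1}$, monotonicity yields $f^k(w+\delta\mathds{1})\leq f^k(v+\delta\mathds{1})\leq M$ for all $k$.

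In both cases every orbit of a point in $\mathcal{V}_\delta$ is bounded from above, so Proposition~\ref{prop:3.1} lets us conclude that $w$ is Lyapunov stable. I do not expect a real obstacle here; the only points needing a little care are verifying that the chosen neighbourhoods actually lie inside $\mathcal{D}$ (which uses only openness of $\mathcal{D}$) and invoking exactly the implication $(iii)\Rightarrow(i)$ of Proposition~\ref{prop:3.1} rather than some stronger statement. One could also present the two cases uniformly, taking the order interval below $v$ as the comparison set when $w\ll v$, and the (bounded) orbit of $v+\delta\mathds{1}$ in the same role when $v$ is Lyapunov stable.
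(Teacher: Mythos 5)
Your proof is correct and follows essentially the same route as the paper's: reduce to the criterion $(iii)\Leftrightarrow(i)$ of Proposition~\ref{prop:3.1}, and dominate orbits near $w$ by the orbit of a point $\leq v$ (case $w\ll v$) or by the bounded orbit of a point near $v$ (case $v$ Lyapunov stable). The paper phrases the neighbourhoods slightly differently (using $\{x\in\mathcal{D}\colon x\ll z\}$ with $w\ll z\ll v$ in the first case, and the downward closure of a neighbourhood of $v$ in the second), but this is only cosmetic; your $\ell_\infty$-balls and $w+\delta\mathds{1}$ comparisons implement the same idea.
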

\begin{proof}
Suppose that $w\ll v$ and let $V =\{x\in \mathcal{D}\colon x\ll z$ be an open neighbourhood of $w$, where $w\ll z\ll v$. For each $x\in V$ we have that $x\ll v$, so that $f^k(x)\leq f^k(v)=v$ for all $k\geq 0$. Thus, the orbit of $x$ is bounded from above and hence $w$ is Lyapunov stable by Proposition \ref{prop:3.1}.

Now assume that $w\leq v$ and $v$ is Lyapunov stable. Then there exists a neighbourhood $U$ of $v$ such that the orbit of each $y\in U$ is bounded from above.  
Let $W=\{x\in\mathcal{D}\colon x\leq y\mbox{ for some }y\in U\}$. Note that $W$ is a neighbourhood of $w$, since $v\in U$ and $w\leq v$. As $f$ is monotone, the orbit of each $x\in W$ is bounded from above under $f$, and hence $w$ is Lyapunov stable. 
\end{proof}

We conclude this section by showing that every t-stable fixed point of a convex monotone piece-wise affine map is Lyapunov stable. Recall that a  map $f\colon\mathbb{R}^n\to\mathbb{R}^n$ is \emph{piece-wise affine} if $\mathbb{R}^n$ can be partitioned into polyhedra such that the restriction of $f$ to each polyhedron is an affine map. 
\begin{corollary}\label{cor:7.5}
If $f\colon\mathbb{R}^n\to\mathbb{R}^n$ is a convex monotone piece-wise affine map, then every t-stable fixed point of $f$ is  Lyapunov stable.
\end{corollary}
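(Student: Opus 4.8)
The plan is to combine the non-expansiveness result of Theorem~\ref{thm:6.5} with the local conical structure that piece-wise affineness forces at a fixed point. Let $v$ be a t-stable fixed point of $f$ and set $h=f'_v$. By Definition~\ref{def:3.2}, $h$ is a monotone convex positively homogeneous map having $0$ as a t-stable fixed point, so Theorem~\ref{thm:6.5} supplies a polyhedral norm $N$ on $\mathbb{R}^n$ with respect to which $h$ is non-expansive. Since $h(0)=0$, this gives $N(h^k(y))\le N(y)$ for every $y\in\mathbb{R}^n$ and every $k\ge 0$; in other words, $0$ is a Lyapunov stable fixed point of $h$, with arbitrarily small forward-invariant $N$-balls. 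The remaining task is to transfer this to $f$ itself near $v$.

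First I would establish the local normal form: there is $r>0$ such that $f(x)=v+h(x-v)$ whenever $\|x-v\|<r$ (Euclidean norm). Since $f$ is continuous and piece-wise affine, we may write $f=g_j$ on closed polyhedra $\mathcal{P}_1,\ldots,\mathcal{P}_m$ that cover $\mathbb{R}^n$, with each $g_j$ affine. Choose $r>0$ so small that the ball $B(v,r)$ meets only those $\mathcal{P}_j$ containing $v$. For such a $\mathcal{P}_j$, the affine map $g_j$ fixes $v$, hence has the form $x\mapsto v+A_j(x-v)$; and because the segment from $v$ to any $x\in\mathcal{P}_j$ lies in $\mathcal{P}_j$, the one-sided directional derivative satisfies $h(x-v)=A_j(x-v)$ for $x\in\mathcal{P}_j$. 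Thus $f(x)=v+h(x-v)$ on each such $\mathcal{P}_j$, and these pieces cover $B(v,r)$.

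Next I would iterate. By equivalence of norms, pick $\delta>0$ with $N(z)<\delta\Rightarrow\|z\|<r$. I claim that for every $x$ with $N(x-v)<\delta$ one has $f^k(x)=v+h^k(x-v)$ for all $k\ge 0$. This follows by induction on $k$: the case $k=0$ is trivial, and if $f^k(x)=v+h^k(x-v)$, then $N(h^k(x-v))\le N(x-v)<\delta$, so $\|h^k(x-v)\|<r$, i.e.\ $f^k(x)\in B(v,r)$; applying the local formula at the point $f^k(x)$ gives $f^{k+1}(x)=v+h\bigl(h^k(x-v)\bigr)=v+h^{k+1}(x-v)$. Consequently $N(f^k(x)-v)=N(h^k(x-v))\le N(x-v)<\delta$ for all $k$.

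It then remains only to conclude: given a neighbourhood $\mathcal{U}$ of $v$, shrink $\delta$ so that $\{x:N(x-v)<\delta\}\subseteq\mathcal{U}$, and take $\mathcal{V}=\{x:N(x-v)<\delta\}$; by the previous paragraph $f^k(\mathcal{V})\subseteq\mathcal{U}$ for all $k\ge 1$, so $v$ is Lyapunov stable. The main obstacle is the first step, namely checking carefully that piece-wise affineness yields the \emph{exact} identity $f=v+h(\cdot-v)$ on a full neighbourhood of $v$ --- in particular dealing with possibly lower-dimensional pieces of the partition and using continuity of $f$ to reduce to closed full-dimensional pieces. Once this local normal form is secured, Theorem~\ref{thm:6.5} does the rest.
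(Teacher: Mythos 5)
Your proof is correct and takes essentially the same route as the paper: both combine the local identity $f(v+x)=v+f'_v(x)$ near $v$ (which you establish directly from the piece-wise affine structure, while the paper cites it from a lemma in the earlier work of the first two authors) with the non-expansive polyhedral norm supplied by Theorem~\ref{thm:6.5}. The paper concludes by noting the forward-invariance $f(v+\mathcal{B})\subseteq v+\mathcal{B}$ for all sufficiently small norm balls, whereas you prove the slightly stronger iterated identity $f^k(x)=v+h^k(x-v)$ by induction, but the underlying argument is the same.
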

\begin{proof}
Since $f$ is piece-wise affine, we can find by \cite[Lemma 6.4]{AG} 
a neighborhood $\mathcal{W}$ of $0$ such that 
$f(v+x)=f(v)+f'_v(x)$ for all $x\in \mathcal{W}$. 
By Theorem \ref{thm:6.5} there exists a norm under which $f'_v$ is non-expansive. Since $f'_v(0)=0$, we can take any open ball, $\mathcal{B}$, around $0$ for this 
norm, and get $f'_v(\mathcal{B})\subseteq \mathcal{B}$.
By taking $\mathcal{B}$ of sufficiently small radius, we can guarantee
that $\mathcal{B}\subseteq \mathcal{W}$.
Since $f(v)=v$, we find for all $x\in \mathcal{B}$ that 
$f(v+x)=v+f'_v(x)\in v+\mathcal{B}$, which shows that
$f(v+\mathcal{B})\subseteq v+\mathcal{B}$. Since this inclusion
holds for all balls $\mathcal{B}$ of sufficiently small
radius, $v$ is a Lyapunov stable fixed point of $f$.
\end{proof}

\section{Tangentially stable periodic points}
For a directed graph $\mathcal{G}$ and integer $k\geq 1$ we let $\mathcal{G}^k$ 
be the directed graph that has the same nodes as $\mathcal{G}$ and it has an arrow 
from node $i$ to  node $j$ if, and only if, there exists a directed path of length $k$ in 
$\mathcal{G}$ from $i$ to $j$. There exists the following relation between 
$\mathcal{G}^c(f^k)$ and $(\mathcal{G}^c(f))^k$. 
\begin{theorem}\label{thm:8.1}
If $f\colon\mathcal{D}\to\mathcal{D}$ is a convex monotone map with a t-stable 
fixed point, then $\mathcal{G}^c(f^k)=(\mathcal{G}^c(f))^k$ for all $k\geq 1$.  
\end{theorem}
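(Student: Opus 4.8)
Since $v$ is a t-stable fixed point of $f$ it is also one of $f^k$, because $(f^k)'_v=(f'_v)^k$ and the orbits of $(f'_v)^k$ are a subsequence of those of $f'_v$, which are bounded from above. Moreover $\partial f^k(v)=\partial\big((f^k)'_v\big)(0)=\partial\big((f'_v)^k\big)(0)$, so the plan is to work throughout with the convex monotone positively homogeneous map $h=f'_v$, which has $0$ as a t-stable fixed point and satisfies $\mathcal{G}^c(h)=\mathcal{G}^c(\partial f(v))=\mathcal{G}^c(f)$ and $\mathcal{G}^c(h^k)=\mathcal{G}^c(\partial h^k(0))=\mathcal{G}^c(f^k)$; the claim thus becomes $\mathcal{G}^c(h^k)=(\mathcal{G}^c(h))^k$. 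From $h(y)=\sup_{P\in\partial h(0)}Py$, positive homogeneity and the rectangularity of $\partial h(0)$ one obtains $h^k(y)=\sup\{P_1\cdots P_k\,y:P_1,\dots,P_k\in\partial h(0)\}$, whence $\partial h^k(0)=\overline{\mathrm{conv}}\{P_1\cdots P_k:P_l\in\partial h(0)\}$; note that each such product already lies in $\partial h^k(0)$, by the subdifferential inequality and monotonicity of $h$. In particular $\mathcal{G}(h^k)=(\mathcal{G}(h))^k$ for the full (non-critical) graphs, using Lemma~\ref{lem:4.4bis}.

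To prove $(\mathcal{G}^c(h))^k\subseteq\mathcal{G}^c(h^k)$, pick by Theorem~\ref{thm:4.5} a matrix $M\in\partial h(0)$ with $\mathcal{G}^c(M)=\mathcal{G}^c(h)$, so that $M^k\in\partial h^k(0)$. By Proposition~\ref{prop:4.2} two critical nodes of $M$ joined by a walk lie in one critical class $C$ of $M$, and every walk between them stays inside $C$; hence the restriction of $\mathcal{G}(M^k)$ to such a $C$ equals $\mathcal{G}((M_{CC})^k)$. Since $M_{CC}$ is irreducible with $\rho(M_{CC})=1$, the power $(M_{CC})^k$ splits into irreducible blocks each of spectral radius $1$, i.e.\ into critical classes of $M^k$; thus $N^c(M)\subseteq N^c(M^k)$ and every length-$k$ walk in $\mathcal{G}^c(M)$ produces an arc of $\mathcal{G}^c(M^k)\subseteq\mathcal{G}^c(h^k)$, which gives this inclusion.

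For the reverse inclusion I would first reduce to the case that $h$ has a fixed point $\bar v\gg0$. With $A=A(h)$ and $B=B(h)$, Lemma~\ref{lem:6.1} writes $h(x_A,x_B)=(h_A(x_A,x_B),h_B(x_B))$, so every matrix in $\partial h^k(0)$ has zero block from $B$ to $A$ and its $B$-diagonal block in $\partial h_B^k(0)$; since $\tau(h_B)<1$ (Proposition~\ref{prop:6.3}, Corollary~\ref{cor:6.4}) forces $\tau(h_B^k)<1$, a monotone-convergence argument shows every matrix in $\partial h_B^k(0)$ has spectral radius $<1$, so no matrix in $\partial h^k(0)$ has a critical class meeting $B$. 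Restricting to the $A$-block therefore identifies $\mathcal{G}^c(h^k)$ with $\mathcal{G}^c((h^A)^k)$ and $\mathcal{G}^c(h)$ with $\mathcal{G}^c(h^A)$, where $h^A(y)=h_A(y,0)$; thus the statement for $h$ is equivalent to the one for $h^A$, and $h^A$ has, by Proposition~\ref{prop:6.2}, a fixed point positive on all of $A(h^A)=A$. Granted such a positive fixed point, Theorem~\ref{thm:6.5} makes $h^A$ non-expansive for a weighted sup-norm; conjugating $h^A$ by the diagonal matrix of this positive fixed point yields a convex monotone map that fixes $\mathds{1}$ and is non-expansive for $\|\cdot\|_\infty$, hence additively homogeneous (see \cite{CT}). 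For such convex topical maps the equality $\mathcal{G}^c(g^k)=(\mathcal{G}^c(g))^k$ is available from \cite{AG}, and pulling it back through the conjugation and the reduction finishes the proof.

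The step I expect to be the main obstacle is precisely this reduction: one must verify that the ``contracting'' block $B(h)$ cannot contribute a critical node to any iterate of $h$ — which is where the interplay of Lemma~\ref{lem:6.1}, Proposition~\ref{prop:6.3} and the Collatz-Wielandt characterization of $\tau$ enters — and then identify the surviving $A$-part, once it carries a positive fixed point, with (a conjugate of) a convex topical map so that the result of \cite{AG} can be invoked.
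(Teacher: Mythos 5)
Your reduction mirrors the paper's own: both pass from $f$ to $h=f'_v$ via $\partial f^k(v)=\partial\big((f'_v)^k\big)(0)$, both use Lemma~\ref{lem:6.1} to split off a contracting $B$-block (with $\tau(h_B)<1$ from Proposition~\ref{prop:6.3}), both obtain a positive fixed point on the $A$-block from Proposition~\ref{prop:6.2}, both conjugate by the diagonal matrix built from it, and both invoke \cite{AG} for the conjugated map. The separate direct verification of the inclusion $(\mathcal{G}^c(h))^k\subseteq\mathcal{G}^c(h^k)$ via powers of an $M$ with $\mathcal{G}^c(M)=\mathcal{G}^c(h)$ is sound but redundant: the reduction, once complete, delivers both containments simultaneously.

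There is, however, a real gap at the citation step. The conjugated map $g=W^{-1}\circ h^A\circ W$ is convex, monotone, positively homogeneous and satisfies $g(0)=0$, $g(\mathds{1})=\mathds{1}$. These properties make $g$ additively \emph{sub}homogeneous, since $g(x+\lambda\mathds{1})\le g(x)+g(\lambda\mathds{1})=g(x)+\lambda\mathds{1}$ for $\lambda\ge 0$, but they do \emph{not} make it additively homogeneous, and neither does sup-norm non-expansiveness: monotonicity plus $\|\cdot\|_\infty$-non-expansiveness only recovers subhomogeneity. A concrete counterexample to your deduction is $g(x)=\max\{x,0\}$ on $\mathbb{R}$: it fixes $1$, is $\|\cdot\|_\infty$-non-expansive, yet $g(-1+2)=1\ne g(-1)+2=2$. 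Consequently Theorem~\ref{thm:8.2}, stated for additively homogeneous (topical) maps, does not apply to $g$ directly. The paper bridges exactly this gap with Corollary~\ref{cor:8.3}: it embeds a subhomogeneous $g$ on $\mathbb{R}^{m}$ as the first block of an additively homogeneous map on $\mathbb{R}^{m+1}$ via a cemetery-state coordinate, notes that the critical graph of the extension is that of $g$ together with an isolated loop on the extra node, applies Theorem~\ref{thm:8.2} to the extension, and then discards the loop. You need this reduction (or an explicitly subhomogeneous version of the \cite{AG} equality) to make the final appeal legitimate; as written, the cited theorem's hypotheses are not met.
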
 
To prove this theorem we reduce it to a special case, which was analysed in \cite{AG}. 
Recall that $g\colon\mathbb{R}^n\to\mathbb{R}^n$ is called \emph{additively homogeneous} if $g(x+\lambda \mathds{1})=g(x)+\lambda\mathds{1}$ for all 
$x\in\mathbb{R}^n$ and $\lambda\in\mathbb{R}$. (Here $\mathds{1}$ is the 
vector in $\mathbb{R}^n$ with all coordinates unity.)
The map $g$ is said to be \emph{additively subhomogeneous} if 
$g(x+\lambda \mathds{1})\leq g(x)+\lambda\mathds{1}$ for all $x\in\mathbb{R}^n$ 
and $\lambda \geq 0$. 
The following theorem  for convex monotone additively homogeneous maps is proved in \cite[Theorem 4.1]{AG}.
\begin{theorem} [\cite{AG}]\label{thm:8.2} 
If $g\colon\mathbb{R}^n\to\mathbb{R}^n$ is a convex monotone additively homogeneous map with a fixed point, then $\mathcal{G}^c(g^k)=(\mathcal{G}^c(g))^k$ for all $k\geq 1$.  
\end{theorem}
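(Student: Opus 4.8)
The plan is to reduce the statement, via the subdifferential, to a combinatorial fact about rectangular sets of row-stochastic matrices.

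\emph{Reduction and the subdifferential of a power.} Fix a fixed point $v$ of $g$ and set $h=g'_v$. Since $(g^k)'_v=(g'_v)^k$ and, for a convex map, the subdifferential at a point equals the subdifferential at $0$ of its directional derivative (cf.\ (\ref{eq:2.3})), we have $\partial(g^k)(v)=\partial(h^k)(0)$ and $\partial g(v)=\partial h(0)$, so the critical graphs of $g^k$ and $g$ coincide with those of $h^k$ and $h$ at $0$. The map $h$ is convex, monotone, positively homogeneous and, like $g$, additively homogeneous with $h(0)=0$; additive homogeneity forces $P\mathds 1=\mathds 1$ for every $P\in\partial h(0)$, so by Propositions~\ref{prop:2.1}--\ref{prop:2.2} the set $\mathcal P:=\partial h(0)$ is a compact convex rectangular set of row-stochastic (hence stable) matrices, and $h$ (as well as $h^k$) is sup-norm non-expansive, so $0$ is a Lyapunov, hence t-stable, fixed point and $\mathcal G^c(h),\mathcal G^c(h^k)$ are well defined. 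I would then show that $\partial(h^k)(0)$ is the rectangular set whose $i$th row-coordinate set is $\mathrm{conv}\{(P_1\cdots P_k)_{i\,\cdot}:P_1,\dots,P_k\in\mathcal P\}$: the inclusion ``$\supseteq$'' holds because $h(x)\ge Px$ for $P\in\mathcal P$ together with monotonicity gives $h^k(x)\ge (P_1\cdots P_k)x$, and ``$\subseteq$'' because $h(x)=\max_{P\in\mathcal P}Px$ rowwise, with the max attained, combined with rectangularity, gives $h^k(x)=\max\{(P_1\cdots P_k)x\}$ rowwise. In particular every element of $\partial(h^k)(0)$ is row-stochastic, and $\mathcal G(\partial(h^k)(0))=(\mathcal G(\mathcal P))^k$ (an arc $(i,l)$ occurs iff some product is positive there, iff there is a length-$k$ walk from $i$ to $l$ in $\mathcal G(\mathcal P)$, using Lemma~\ref{lem:4.4bis}). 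It remains to identify the \emph{critical} sub-graphs.

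\emph{The inclusion $(\mathcal G^c(h))^k\subseteq\mathcal G^c(h^k)$.} By Theorem~\ref{thm:4.5} choose $M\in\mathcal P$ with $\mathcal G^c(M)=\mathcal G^c(h)$; for a row-stochastic matrix a class is critical iff it is closed, so $\mathcal G^c(M)$ is the disjoint union of its closed classes $C_1,\dots,C_r$, which are pairwise non-communicating by Proposition~\ref{prop:4.2}. A length-$k$ walk in $\mathcal G^c(h)$ therefore stays in a single $C_s$. Since $M|_{C_s}$ is irreducible of period $d_s$, the matrix $M^k|_{C_s}$ is a disjoint union of $\gcd(k,d_s)$ irreducible closed blocks partitioning $C_s$, each a critical class of the row-stochastic matrix $M^k\in\partial(h^k)(0)$; moreover two nodes of $C_s$ joined by a length-$k$ walk in $\mathcal G(M)$ have cyclic indices differing by a multiple of $\gcd(k,d_s)$, hence lie in a common such block. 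Thus every length-$k$ walk $i\to l$ in $\mathcal G^c(h)$ yields $(i,l)\in\mathcal G^c(M^k)\subseteq\mathcal G^c(h^k)$, and (taking $i=l$) $N^c(h)\subseteq N^c(h^k)$.

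\emph{The converse inclusion and the main obstacle.} Let $(i,l)$ be an arc of $\mathcal G^c(h^k)$, witnessed by $P\in\partial(h^k)(0)$ with $i,l$ in a closed class $R$ of $P$ and $P_{il}>0$. From $h^k(x)\ge Px$ evaluated at the indicator $e_R$ of $R$, together with $(Pe_R)_R=\mathds 1_R$ and $h^k(e_R)\le h^k(\mathds 1)=\mathds 1$, one gets $e_R\le h^k(e_R)\le\mathds 1$ with $h^k(e_R)_R=\mathds 1_R$; hence the increasing orbit $(h^{mk}(e_R))_m$ converges to a fixed point $z$ of $h^k$ with $0\le z\le\mathds 1$ and $z_R=\mathds 1_R$. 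Writing the orbit $z,h(z),\dots,h^{k-1}(z)$ through rowwise maximizers $Q_1,\dots,Q_k\in\mathcal P$ of $h$ and setting $R_j:=\{i:h^j(z)_i=1\}$ (so $R\subseteq R_0=R_k$), the equalities $h^j(z)=Q_jh^{j-1}(z)$ and $h^{j}(z)\le\mathds 1$ force, for each $j$, the out-neighbourhood of $R_j$ under $Q_j$ to lie in $R_{j-1}$ (indices mod $k$, $R_k=R_0$). The main obstacle is to conclude from this cyclic family that $R\subseteq N^c(h)$ and that each arc of $\mathcal G(P)|_R$ is realised by a length-$k$ walk staying inside $\mathcal G^c(h)$ (so passing only through $N^c(h)$): here I would compare $z$ with the fixed points of $h$ built as in Proposition~\ref{prop:6.2} from the Perron eigenvectors of the critical classes of $M$, apply Proposition~\ref{prop:4.4} to each cyclically shifted product $Q_{j-1}\cdots Q_1Q_k\cdots Q_j$ (which fixes $h^{j-1}(z)$ and leaves $R_{j-1}$ closed) to pin down the $R_j$ as unions of critical classes, and then re-run the period bookkeeping of the previous paragraph to reroute the walks through critical nodes. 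Granting this, $(i,l)\in(\mathcal G^c(h))^k$ and (with $i=l$) $N^c(h^k)\subseteq N^c(h)$, which together with the previous paragraph gives $\mathcal G^c(h^k)=(\mathcal G^c(h))^k$, and the initial reduction transfers the identity back to $g$.
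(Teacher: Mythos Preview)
The paper does not prove Theorem~\ref{thm:8.2}: it is quoted verbatim from \cite[Theorem~4.1]{AG} and used as a black box. The present paper's contribution is to \emph{reduce} the general convex monotone case to this additively homogeneous case (Corollary~\ref{cor:8.3} via the cemetery-state trick, then Proposition~\ref{prop:8.4} via the positive eigenvector of Proposition~\ref{prop:6.2} and a diagonal conjugation). So there is no ``paper's own proof'' to compare your proposal against; what you have written is an attempt to reprove the cited result from scratch.

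As for the attempt itself: your reduction to $h=g'_v$ and the description of $\partial(h^k)(0)$ are fine, and the easy inclusion $(\mathcal G^c(h))^k\subseteq\mathcal G^c(h^k)$ via $M^k$ for a single $M$ realising $\mathcal G^c(h)$ is standard. The hard direction, however, you have only sketched, and the sketch has a real gap. Knowing that a closed class $R$ of some $P\in\partial(h^k)(0)$ satisfies $h^k(e_R)_R=\mathds 1_R$ and that the maximizers $Q_1,\dots,Q_k$ send each $R_j$ into $R_{j-1}$ does \emph{not} by itself force the intermediate nodes to lie in $N^c(h)$, nor does it force the arcs used to be critical arcs of some single matrix in $\mathcal P$; the $Q_j$ are chosen rowwise and may differ at each step, so the ``cyclically shifted products'' you invoke need not have $R_{j-1}$ as a union of \emph{their own} critical classes. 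The argument in \cite{AG} handles this via a more delicate analysis (essentially a max-plus spectral/Kohlberg-type argument tailored to the stochastic case), and your appeal to Proposition~\ref{prop:4.4} plus ``period bookkeeping'' is not enough to close it. You have correctly located the obstacle but not overcome it.
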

We note that every fixed point of a monotone subhomogeneous map 
$g\colon\mathbb{R}^n\to\mathbb{R}^n$ is stable, because $g$ is non-expansive with respect to the sup-norm \cite{CT}. 
Using a standard "cemetery state" argument, see \cite[Section 1.4]{AG}, we derive the following consequence of Theorem \ref{thm:8.2}. 
\begin{corollary}\label{cor:8.3} 
If $g\colon\mathbb{R}^n\to\mathbb{R}^n$ is a convex monotone additively subhomogeneous map with a fixed point, then 
$\mathcal{G}^c(g^k)=(\mathcal{G}^c(g))^k$ for all $k\geq 1$.  
\end{corollary}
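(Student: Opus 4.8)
The plan is to reduce Corollary \ref{cor:8.3} to the additively homogeneous case, Theorem \ref{thm:8.2}, by the ``cemetery state'' homogenization of \cite[Section 1.4]{AG}. Given $g$ as in the statement with a fixed point $v\in\mathbb{R}^n$, I would adjoin one extra coordinate, indexed by $0$, and define $\hat g\colon\mathbb{R}^{n+1}\to\mathbb{R}^{n+1}$ (coordinates indexed by $\{0,1,\ldots,n\}$) by
\[
\hat g_0(x_0,x_1,\ldots,x_n)=x_0,\qquad \hat g_i(x_0,x_1,\ldots,x_n)=g_i(x_1-x_0,\ldots,x_n-x_0)+x_0\quad(1\le i\le n).
\]
First I would record the easy structural facts: $\hat g$ is finite valued and convex (each $\hat g_i$ is a convex function of an affine argument plus a linear term); it is monotone, monotonicity in the coordinates $x_j$ ($j\ge1$) being clear and monotonicity in $x_0$ following from $g_i(y-s\mathds{1})\ge g_i(y)-s$ for $s\ge0$, which is exactly additive subhomogeneity; and $\hat g$ is additively homogeneous, since $\hat g(x+\mu\mathds{1})=\hat g(x)+\mu\mathds{1}$ by direct computation. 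A one-line induction gives $\hat g^{\,k}(x_0,x)=(x_0,\,g^k(x_1-x_0,\ldots,x_n-x_0)+x_0\mathds{1})$, so $\hat g^{\,k}$ is exactly the cemetery homogenization of $g^k$; in particular $(0,v)$ is a fixed point of $\hat g$ and of $\hat g^{\,k}$. Since $g^k$ is again convex, monotone and additively subhomogeneous, it is sup-norm non-expansive, so $v$ is a t-stable fixed point of $g$ and of $g^k$, while $(0,v)$ is a t-stable fixed point of $\hat g$ and $\hat g^{\,k}$; thus all critical graphs below are well defined.

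The second step is to compare the critical graphs. By the subdifferential chain rule along the affine map $(x_0,x)\mapsto(x_1-x_0,\ldots,x_n-x_0)$, together with rectangularity (Proposition \ref{prop:2.1}), $\partial\hat g(0,v)$ consists precisely of the matrices $\hat P$, $P\in\partial g(v)$, where row $0$ of $\hat P$ is the unit vector $e_0$ and, for $i\ge1$, $\hat P_{ij}=P_{ij}$ for $j\ge1$ and $\hat P_{i0}=1-\sum_j P_{ij}$. Since $g$ is additively subhomogeneous, every $P\in\partial g(v)$ is substochastic ($P\mathds{1}\le\mathds{1}$, by applying $P$ to $v+\lambda\mathds{1}-v$ and using subhomogeneity), so each $\hat P$ is stochastic; the same holds with $g^k$ in place of $g$. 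I would then show that for substochastic $P$ one has $\mathcal{G}^c(\hat P)=\mathcal{G}^c(P)\sqcup\{0\}$, the extra node $0$ being isolated and carrying only a self-loop: node $0$ is an absorbing, hence critical, class of $\hat P$; the classes of $\hat P$ contained in $\{1,\ldots,n\}$ and their spectral radii coincide with those of $P$; and---the crucial point---a critical class $C$ of $P$ satisfies $P_{CC}\mathds{1}_C=\mathds{1}_C$, because $P_{CC}$ is irreducible with $\rho(P_{CC})=1$ and $P_{CC}\mathds{1}_C\le\mathds{1}_C$, so multiplying by the positive left Perron eigenvector of $P_{CC}$ forces equality; hence no node of a critical class of $P$ has an arc to $0$ in $\hat P$. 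Taking the union over $P\in\partial g(v)$ (and over $\partial(g^k)(v)$) and invoking the definition of $\mathcal{G}^c$ and Theorem \ref{thm:4.5} yields
\[
\mathcal{G}^c(\hat g)=\mathcal{G}^c(g)\sqcup\{0\},\qquad \mathcal{G}^c(\hat g^{\,k})=\mathcal{G}^c(g^k)\sqcup\{0\}.
\]

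Finally, I would apply Theorem \ref{thm:8.2} to the convex monotone additively homogeneous map $\hat g$, which has the fixed point $(0,v)$, to get $\mathcal{G}^c(\hat g^{\,k})=(\mathcal{G}^c(\hat g))^k$ for all $k\ge1$. Since node $0$ is isolated and carries a self-loop, $(\mathcal{H}\sqcup\{0\})^k=\mathcal{H}^k\sqcup\{0\}$ for any graph $\mathcal{H}$ on $\{1,\ldots,n\}$; combining this with the two displayed identities gives $\mathcal{G}^c(g^k)\sqcup\{0\}=(\mathcal{G}^c(g))^k\sqcup\{0\}$, hence $\mathcal{G}^c(g^k)=(\mathcal{G}^c(g))^k$, as claimed. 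The main obstacle is the second step: one must choose the homogenization so that it commutes with iteration (which is why the $\max$-free linear homogenization above is used rather than one with an absorbing barrier), check carefully that $\partial\hat g(0,v)$ is exactly the set of cemetery (stochastic) extensions of $\partial g(v)$, and---most delicately---verify the ``no leakage'' property that critical classes of a substochastic matrix are themselves stochastic, so that the adjoined coordinate really appears as an isolated node of the critical graph.
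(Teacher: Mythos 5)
Your proof is correct and follows the same cemetery-state homogenization used in the paper: adjoin an absorbing coordinate to turn the subhomogeneous $g$ into an additively homogeneous $\hat g$, apply Theorem~\ref{thm:8.2}, and restrict back to the original coordinates. You also spell out the ``no leakage'' fact (a critical class $C$ of a substochastic $P$ satisfies $P_{CC}\mathds{1}_C=\mathds{1}_C$, so the cemetery node is isolated in $\mathcal{G}^c(\hat P)$), which the paper states without elaboration; this is a nice but not strictly necessary refinement, since the cemetery node is absorbing and therefore cannot lie on any path between nodes of $\{1,\ldots,n\}$ in any case.
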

\begin{proof}
Define $h\colon\mathbb{R}^{n+1}\to\mathbb{R}^{n+1}$ by 
\[
h(x,x_{n+1})=\left(\begin{array}{c} x_{n+1}\mathds{1}+g(x-x_{n+1}\mathds{1})\\x_{n+1}
\end{array}\right)
\]
for all $(x,x_{n+1})\in\mathbb{R}^{n+1}$. It is easy to verify that $h$ is a convex monotone additively homogeneous map. 
Let $v\in\mathbb{R}^n$ be  a fixed point of $g$ and remark that $w=(v,0)$ is a fixed point of $h$. Due to the triangular structure of $h$, we see that 
$\partial g(v)=\partial h(w)_{JJ}$, where $J=\{1,\ldots,n\}$ and $\mathcal{G}^c(h)$ is the union of $\mathcal{G}^c(g)$ with the loop $\{(n+1),(n+1)\}$. 

The same is true for the critical graph of $g^k$ and $h^k$, as 
\[
h^k(x,x_{n+1})=\left (\begin{array}{c} x_{n+1}\mathds{1}+g^k(x-x_{n+1}\mathds{1})
\\x_{n+1}
\end{array}\right )
\]
for all $(x,x_{n+1})\in\mathbb{R}^{n+1}$. 
It follows from Theorem \ref{thm:8.2} that $\mathcal{G}^c(h^k)=(\mathcal{G}^c(h))^k$ 
for all $k\geq 1$. By considering the subgraph on the nodes $\{1,\ldots,n\}$ we find that 
 $\mathcal{G}^c(g^k)=(\mathcal{G}^c(g))^k$  for all $k\geq 1$.
\end{proof}
We shall use the previous corollary to prove the following proposition, which is the key 
ingredient in the proof of Theorem \ref{thm:8.1}. 
\begin{proposition}\label{prop:8.4} 
If $f\colon\mathcal{D}\to\mathcal{D}$ is a convex monotone map with a 
t-stable fixed point $v\in\mathcal{D}$, then 
$\mathcal{G}^c((f'_v)^k)=(\mathcal{G}^c(f'_v))^k$ for all $k\geq 1$.  
\end{proposition}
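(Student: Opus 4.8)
\emph{Proof proposal.} The plan is to reduce the statement to the additively subhomogeneous case already settled in Corollary~\ref{cor:8.3}. Write $h=f'_v$, which is convex, monotone, positively homogeneous, with $0$ as a t-stable fixed point, and adopt the notation $C=N^c(h)\subseteq A=A(h)$, $B=B(h)$ of Lemma~\ref{lem:6.1}, so that $h(x_A,x_B)=(h_A(x_A,x_B),h_B(x_B))$. First I would normalise: by Proposition~\ref{prop:6.2} there is an eigenvector $u$ of $h$ with $u\gg 0$ on $A$ and $u=0$ on $B$, and by Corollary~\ref{cor:6.4} there are $0<\lambda<1$ and $w\gg 0$ in $\mathbb{R}^B$ with $h_B(w)\leq\lambda w$. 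Setting $v\in\mathbb{R}^n$ equal to $u$ on $A$ and to $w$ on $B$, and $W=\mathrm{diag}(v)$, the conjugate $g=W^{-1}\circ h\circ W$ is again convex, monotone, positively homogeneous, with $0$ t-stable, still splits as $g(x_A,x_B)=(g_A(x_A,x_B),g_B(x_B))$, and satisfies $g_A(\mu\mathds{1},0)=\mu\mathds{1}$ on $A$ and $g_B(\mu\mathds{1})\leq\lambda\mu\mathds{1}$ on $B$ for all $\mu\geq 0$. Since conjugation by a positive diagonal matrix does not change the sign pattern of a matrix, it preserves $\mathcal{G}(\cdot)$ and $\mathcal{G}^c(\cdot)$ and commutes with iteration, so it suffices to prove $\mathcal{G}^c(g^k)=(\mathcal{G}^c(g))^k$.

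Next I introduce $g^A\colon\mathbb{R}^A\to\mathbb{R}^A$, $g^A(y)=g_A(y,0)$, as in Lemma~\ref{lem:6.1}. Using the representation $g(x)=\sup_{Q\in\partial g(0)}Qx$ together with the rectangularity of $\partial g(0)$, one checks that $\partial g^A(0)=\{Q_{AA}\colon Q\in\partial g(0)\}$; the identity $g^A(\mathds{1})=g_A(\mathds{1},0)=\mathds{1}$ then forces $Q_{AA}\mathds{1}\leq\mathds{1}$ for every $Q\in\partial g(0)$, i.e.\ each such $Q_{AA}$ is substochastic. Hence $g^A$ is a convex monotone additively subhomogeneous map with fixed point $0$, and Corollary~\ref{cor:8.3} gives $\mathcal{G}^c((g^A)^k)=(\mathcal{G}^c(g^A))^k$ for all $k\geq 1$.

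It then remains to transfer critical graphs between $g$ and $g^A$, and between $g^k$ and $(g^A)^k$. Every $Q\in\partial g(0)$ is a stable nonnegative matrix (Proposition~\ref{prop:3.1}) and $Q_{BA}=0$ (Lemma~\ref{lem:6.1}), so each class of $Q$ lies entirely in $A$ or entirely in $B$; a class $K\subseteq B$ cannot be critical, because $Q_{BB}\mathds{1}\leq\lambda\mathds{1}$ yields $\rho(Q_{KK})\leq\|Q_{KK}\|_\infty\leq\lambda<1$. Thus the critical classes of $Q$ all lie in $A$, so $N^c(Q)=N^c(Q_{AA})$ and $\mathcal{G}^c(Q)=\mathcal{G}^c(Q_{AA})$; taking unions over $Q\in\partial g(0)$ and using $\partial g^A(0)=\{Q_{AA}\colon Q\in\partial g(0)\}$ gives $\mathcal{G}^c(g)=\mathcal{G}^c(g^A)$. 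The same argument applies to $g^k$: since $g_B(0)=0$, induction gives $g^k(x_A,x_B)=((g^A)^k(x_A),g_B^k(x_B))$, so $g^k$ has the same block-triangular shape, $0$ is t-stable for $g^k$, every $M\in\partial(g^k)(0)$ has $M_{BA}=0$ and $M_{BB}\mathds{1}\leq\lambda^k\mathds{1}$, hence its critical classes lie in $A$, and $\partial((g^k)^A)(0)=\{M_{AA}\colon M\in\partial(g^k)(0)\}$ with $(g^k)^A=(g^A)^k$; therefore $\mathcal{G}^c(g^k)=\mathcal{G}^c((g^k)^A)=\mathcal{G}^c((g^A)^k)$. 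Combining, $\mathcal{G}^c(g^k)=\mathcal{G}^c((g^A)^k)=(\mathcal{G}^c(g^A))^k=(\mathcal{G}^c(g))^k$, and pulling back through $W$ finishes the proof.

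The \emph{main obstacle} is the last paragraph: one must check that neither $g$ nor any iterate $g^k$ has critical nodes outside $A$ and that the block-triangular form is faithfully inherited by iterates. This is exactly where the contraction of the ``$B$-block'' (Proposition~\ref{prop:6.3} and Corollary~\ref{cor:6.4}, repackaged as $g_B(\mathds{1})\leq\lambda\mathds{1}$) and the absence of arrows from $B$ to $A$ are used; the remaining points are routine manipulations with subdifferentials, rectangularity, and the representation $g=\sup_{Q}Q(\cdot)$ (one may invoke Lemma~\ref{lem:4.4bis} or Theorem~\ref{thm:4.5} to pick witnessing matrices when checking inclusions of graphs).
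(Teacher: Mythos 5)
Your proof is correct and follows essentially the same route as the paper: reduce via Lemma~\ref{lem:6.1} and a positive diagonal conjugation to an additively subhomogeneous map fixing $0$ and $\mathds{1}$, apply Corollary~\ref{cor:8.3}, and transfer the critical graphs back through the $A/B$ decomposition. The only differences are cosmetic --- the paper conjugates just the restricted map $h^A$ on $\mathbb{R}^A$ rather than all of $h$ (so it never needs the explicit $B$-block contraction from Corollary~\ref{cor:6.4}), and your careful check that the critical classes of $\partial(g^k)(0)$ stay inside $A$ usefully fleshes out the step ``$\mathcal{G}^c((h^A)^k)=\mathcal{G}^c(h^k)$'' that the paper states tersely.
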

\begin{proof}
Recall that $f'_v\colon\mathbb{R}^n\to\mathbb{R}^n$ is a convex monotone 
positively homogeneous map. Write $h=f'_v$ and let $A$, $B$, $C$, $h_A$, $h_B$ and $h^A$ be as in Lemma \ref{lem:6.1}. 
Similar notation will be used for $h^k$, so $(h^k)_A$, $(h^k)_B$ and $(h^A)^k$. 
By definition of $A$ we know that $\mathcal{G}^c(h^A)=\mathcal{G}^c(h)$. 
As $v$ is a t-stable fixed point of $f$, $0$ is t-stable for $h$. 
It follows from Lemma \ref{lem:6.1} that we can write $h$ in the form 
\[
h(x_A,x_B)=(h_A(x_A,x_B),h_B(x_B)),
\]
where $h_A\colon\mathbb{R}^A\times\mathbb{R}^B\to\mathbb{R}^A$ and 
$h_B\colon\mathbb{R}^B\to\mathbb{R}^B$ are convex monotone positively homogeneous maps. 
Due to the this form we have that $(h^k)_B=(h_B)^k$ and $(h^k)^A=(h^A)^k$, where  
$h^A\colon\mathbb{R}^A\to\mathbb{R}^A$ is given by $h^A(x_A)=h_A(x_A,0)$. 
This implies that $\mathcal{G}^c((h^A)^k)=\mathcal{G}^c(h^k)$. 

By Proposition \ref{prop:6.2} there exists $v_A\gg 0$ in $\mathbb{R}^A$ such that $h^A(v_A)=v_A$. 
Let $W$ be the $|A|\times|A|$ diagonal matrix with $W_{ii}=(v_A)_i$ for all $i\in A$. 
Now define $g\colon\mathbb{R}^A\to\mathbb{R}^A$ by 
\[
g(x)=(W^{-1}\circ h^A\circ W)(x)\mbox{\quad for all }x\in\mathbb{R}^A.
\]
Clearly $\mathcal{G}^c((h^A)^k)=\mathcal{G}^c(g^k)$ for all $k\geq 1$. 
Moreover, $g(0)=0$ and $g(\mathds{1})=\mathds{1}$. so that 
\[
g(x+\lambda \mathds{1})\leq g(x)+\lambda \mathds{1}
\]  
for all $x\in\mathbb{R}^A$ and $\lambda \geq 0$, as $g$ is convex and positively homogeneous. 
By Corollary \ref{cor:8.3} we get that 
$\mathcal{G}^c(g^k)=(\mathcal{G}^c(g))^k$ for all $k\geq 1$. 
From this it follows that 
\[
\mathcal{G}^c(h^k)=\mathcal{G}^c((h^A)^k)=\mathcal{G}^c(g^k)=(\mathcal{G}^c(g))^k 
= (\mathcal{G}^c(h^A))^k=(\mathcal{G}^c(h))^k,
\]
which completes the proof.
\end{proof}
By using Proposition \ref{prop:8.4} it is now easy to prove Theorem \ref{thm:8.1} 
\begin{proof}[Proof of Theorem \ref{thm:8.1}]
Let $f\colon\mathcal{D}\to\mathcal{D}$ be a convex monotone map with a t-stable fixed point $v\in\mathcal{D}$. 
Then $0$ is a t-stable fixed point of $f'_v$. 
For each $k\geq 1$ we have that $\partial f^k(v)=\partial (f^k)'_v(0)$, so that 
\[
\mathcal{G}^c(f^k)=\mathcal{G}^c(\partial f^k(v))=\mathcal{G}^c(\partial (f^k)'_v(0)) =
\mathcal{G}^c((f^k)'_v).
\]
Thus, it follows from Proposition \ref{prop:8.4} that 
\[
\mathcal{G}^c(f^k)=\mathcal{G}^c((f^k)'_v)=\mathcal{G}^c((f'_v)^k)=
(\mathcal{G}^c(f'_v))^k=(\mathcal{G}^c(f))^k
\]
for each $k\geq 1$, and we are done.
\end{proof}

To analyse the periods of t-stable periodic points we need to recall the notion 
of cyclicity of a graph. The \emph{cyclicity} of a strongly connected directed graph 
$\mathcal{G}$, denoted $c(\mathcal{G})$, is the greatest common divisor of the
lengths of its circuits. The cyclicity of a general directed graph $\mathcal{G}$ is given by, 
\[
c(\mathcal{G})=\mathrm{lcm}\,\{c(\mathcal{G}_i)\colon \mbox{$\mathcal{G}_i$ is a strongly connected component of $\mathcal{G}$}\}.
\] 
The \emph{cyclicity} of a nonnegative stable matrix $P$ is defined by 
$c(P)=c(\mathcal{G}^c(P))$. Note that $c(P)$ is the order of a permutation on $n$ letters, where $n$ is the size of the matrix. 
The following consequence of the Perron-Frobenius theorem concerning the cyclicity  of stable nonnegative matrices can be found in \cite[Theorem 9.1]{NVL}.
\begin{theorem}[\cite{NVL}]\label{thm:8.5}
If $P$ is stable nonnegative matrix, then the period of each periodic point of $P$ divides $c(P)$.  
\end{theorem}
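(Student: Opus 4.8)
The plan is to characterize the periods of periodic points of $P$ through the peripheral spectrum of $P$, and then to compute that spectrum from the normal form of Proposition~\ref{prop:4.3} together with the Perron--Frobenius theorem.

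First I would establish the spectral description of the periodic points. Since $P$ is stable, every eigenvalue of modulus $1$ is semisimple by Proposition~\ref{prop:4.1}; decomposing $\mathbb{C}^n$ into generalized eigenspaces and noting that $P^p-I$ is invertible on the generalized eigenspace of $\lambda$ whenever $\lambda^p\neq 1$, one obtains
\[
\ker(P^p-I)=\bigoplus_{\lambda\,:\,\lambda^p=1}\ker(P-\lambda I)
\]
for every $p\geq 1$ (all such $\lambda$ are roots of unity, hence of modulus $1$, hence semisimple). Writing a periodic point $z$ as $z=\sum_\lambda z_\lambda$ with $z_\lambda\in\ker(P-\lambda I)$, the identity $P^m z=\sum_\lambda \lambda^m z_\lambda$ together with the linear independence of the eigenspaces shows that $P^m z=z$ holds precisely when $m$ is a common multiple of the orders of those $\lambda$ for which $z_\lambda\neq 0$; hence the period of $z$ is the least common multiple of these orders. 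In particular the period of every periodic point divides
\[
L:=\lcm\{\operatorname{ord}(\lambda)\colon \lambda\text{ is an eigenvalue of }P\text{ with }|\lambda|=1\}
\]
(with $L=1$ if there is no such eigenvalue), and it suffices to prove that $L$ divides $c(P)$ --- in fact $L=c(P)$.

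Next I would identify the unit-modulus eigenvalues of $P$. By the block-triangular form $\Pi^T P\Pi$ of Proposition~\ref{prop:4.3}, the spectrum of $P$ is the union of the spectra of $P_{UU}$, $P_{CC}$, $P_{DD}$ and $P_{II}$, and since $\rho(P_{UU}),\rho(P_{DD}),\rho(P_{II})<1$, every eigenvalue of modulus $1$ is an eigenvalue of $P_{CC}=\operatorname{diag}(P_{C_1C_1},\dots,P_{C_rC_r})$, hence of some critical block $P_{C_iC_i}$. Each $P_{C_iC_i}$ is irreducible and nonnegative with $\rho(P_{C_iC_i})=1$, so by the Perron--Frobenius theorem its peripheral spectrum is exactly the set of $h_i$-th roots of unity, where $h_i$ is the index of imprimitivity of $P_{C_iC_i}$, which equals the cyclicity of the strongly connected graph $\mathcal{G}(P_{C_iC_i})$. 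Because distinct critical classes are mutually inaccessible (Proposition~\ref{prop:4.2}), the strongly connected components of $\mathcal{G}^c(P)$ are precisely $C_1,\dots,C_r$, so $c(P)=c(\mathcal{G}^c(P))=\lcm(h_1,\dots,h_r)$. Combining, the eigenvalues of $P$ of modulus $1$ are exactly the $h_i$-th roots of unity for $1\leq i\leq r$, whence $L=\lcm(h_1,\dots,h_r)=c(P)$, and the period of each periodic point divides $c(P)$.

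The main obstacle is the first step: making the passage from ``$P^p z=z$'' to ``$z$ is a sum of honest eigenvectors whose eigenvalues are $p$-th roots of unity, and $p$ is the lcm of their orders'' fully rigorous. This rests on the semisimplicity of the peripheral eigenvalues --- without it, $\ker(P^p-I)$ could in principle meet a generalized eigenspace without lying in the genuine eigenspace, and the clean formula for the period would fail. Once the spectral characterization of the period is in place, the remainder is a routine application of Proposition~\ref{prop:4.3} and classical Perron--Frobenius theory, so the bulk of the work is concentrated in that reduction.
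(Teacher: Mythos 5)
The paper does not give a proof of this statement: it states Theorem~\ref{thm:8.5} as a citation to \cite{NVL} (specifically Theorem~9.1 there) and uses it as a black box, so there is no in-paper argument to compare against. Your proposal is a correct, self-contained proof via the standard spectral route, and the two steps are sound. The reduction of the period of a periodic point to $\lcm\{\operatorname{ord}(\lambda)\colon z_\lambda\neq 0\}$ is rigorous precisely because stability makes the peripheral spectrum semisimple (Proposition~\ref{prop:4.1}(iii)), so that $\ker(P^p-I)$ is the direct sum of genuine eigenspaces for the $p$-th roots of unity and Jordan blocks cause no trouble. The identification of the peripheral spectrum of $P$ with $\bigcup_i\{h_i\text{-th roots of unity}\}$ via the normal form of Proposition~\ref{prop:4.3} and Frobenius' theorem on the index of imprimitivity is also correct; one small point worth making explicit is that by Proposition~\ref{prop:4.2} there are no arcs at all between distinct critical classes, so $\mathcal{G}^c(P)$ is the disjoint union of the strongly connected graphs $\mathcal{G}(P_{C_iC_i})$, each of which has at least one circuit (a self-loop when $|C_i|=1$, since then $p_{jj}=\rho(P_{C_iC_i})=1>0$), which is what makes $c(P)=\lcm(h_1,\dots,h_r)$ well-defined and equal to $L$. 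This gives the stronger conclusion $L=c(P)$, of which the stated divisibility is a consequence.
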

For a convex monotone map $f\colon\mathcal{D}\to\mathcal{D}$ with a t-stable 
fixed point, we define the \emph{cyclicity of} $f$ by $c(f)=c(\mathcal{G}^c(f))$. 
\begin{theorem}\label{thm:8.6}
If $\mathcal{D}\subseteq\mathbb{R}^n$ is downward and $f\colon\mathcal{D}\to\mathcal{D}$ is a convex monotone map with a t-stable fixed point, then the 
period of  each t-stable periodic point of $f$ divides $c(f)$. In particular, 
the period of each t-stable periodic point is the order of a permutation on $n$ letters.  
\end{theorem}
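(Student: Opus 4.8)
The plan is to reduce the combinatorial period of $\xi$ to Theorem~\ref{thm:8.5} for stable nonnegative matrices, by linearising $f$ at an auxiliary fixed point lying below the orbit and showing that the orbit, once restricted to the critical nodes, is an \emph{exact} orbit of a block-diagonal Perron--Frobenius matrix. Concretely, let $\xi$ be a t-stable periodic point of $f$ of minimal period $p$, write $\xi_j=f^j(\xi)$ and $g=f^p$; each $\xi_j$ is then a t-stable fixed point of $g$, and by Theorem~\ref{thm:8.1} one has $\mathcal{G}^c(g)=(\mathcal{G}^c(f))^p$, so that $N^c(g)=N^c(f)=:C$. Let $u:=f^\omega\big(\xi_0\wedge\cdots\wedge\xi_{p-1}\big)$; by (the proof of) Corollary~\ref{cor:7.4.1} this limit exists, $u$ is a t-stable fixed point of $f$, and $u\le\xi_j$ for every $j$. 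Using Proposition~\ref{prop:4.3} and Theorem~\ref{thm:4.5}, pick $M\in\partial f(u)$ with $\mathcal{G}^c(M)=\mathcal{G}^c(f)$; then $M$ is a stable nonnegative matrix whose critical classes are exactly the strongly connected components $C_1,\dots,C_r$ of $\mathcal{G}^c(f)$, with cyclicities $d_1,\dots,d_r$, and $M_{CC}$ is block diagonal with irreducible blocks $M_{C_iC_i}$ of spectral radius one. In particular $M_{CC}$ is stable, all of its nodes are critical, and $c(M_{CC})=c(\mathcal{G}^c(M_{CC}))=c(\mathcal{G}^c(f))=c(f)$; the same remarks applied to powers give $N^c(M^p)=C$.

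I would next show that $\big((\xi_j)_C\big)_j$ is a linear orbit of $M_{CC}$. Put $y_j:=\xi_j-u\ge 0$. From the subdifferential inequality $f(u+y)-f(u)\ge My$ together with $f(u)=u$ and $f(\xi_j)=\xi_{j+1}$ one gets $y_{j+1}\ge My_j$ for all $j$ (indices mod $p$), hence the telescoping chain $y_j\ge My_{j-1}\ge M^2y_{j-2}\ge\cdots\ge M^py_j$. Since $M^py_j\le y_j$ and $y_j\ge 0$, Proposition~\ref{prop:4.4} applied to the stable matrix $M^p$ (using $N^c(M^p)=C$) gives $(M^py_j)_C=(y_j)_C$; as the two ends of the chain then agree on $C$, so do all intermediate terms, and in particular $(y_{j+1})_C=(My_j)_C$. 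Because a critical node has $M$-arcs only to $C\cup D$ (normal form of Proposition~\ref{prop:4.3}), one has $(M^p)_{CC}=(M_{CC})^p$ and $(M^p)_{C\ell}=0$ for $\ell\notin C\cup D$, whence $(y_j)_C=(M^py_j)_C=(M_{CC})^p(y_j)_C+(M^p)_{CD}(y_j)_D$; applying Proposition~\ref{prop:4.4} once more, this time to $(M_{CC})^p$ (all of whose nodes are critical) at the nonnegative vector $(y_j)_C$, gives $(M_{CC})^p(y_j)_C=(y_j)_C$, and therefore $(M^p)_{CD}(y_j)_D=0$. Writing $(M^p)_{CD}=\sum_{\ell=0}^{p-1}(M_{CC})^{\ell}M_{CD}(M_{DD})^{p-1-\ell}$ as a sum of nonnegative matrices, the $\ell=p-1$ term yields $(M_{CC})^{p-1}M_{CD}(y_j)_D=0$; left-multiplying by the blockwise positive left Perron eigenvector $m$ of $M_{CC}$, which satisfies $m(M_{CC})^{p-1}=m$, gives $M_{CD}(y_j)_D=0$. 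Hence $(y_{j+1})_C=(My_j)_C=M_{CC}(y_j)_C$ for every $j$.

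To finish, note that $\xi_p=\xi_0$ forces $(M_{CC})^p(y_0)_C=(y_0)_C$, so $(y_0)_C$ is a periodic point of the stable nonnegative matrix $M_{CC}$ of some period $p''$ dividing $p$; Theorem~\ref{thm:8.5} then gives $p''\mid c(M_{CC})=c(f)$. On the other hand $(\xi_{p''})_C=u_C+(M_{CC})^{p''}(y_0)_C=u_C+(y_0)_C=(\xi_0)_C$, and $\xi_{p''}$ and $\xi_0$ are both t-stable fixed points of $g=f^p$ (since $p''\mid p$) which agree on $C=N^c(g)$; by Theorem~\ref{thm:5.3}, $\xi_{p''}=\xi_0$, i.e.\ $f^{p''}(\xi)=\xi$, so minimality of $p$ gives $p''=p$, and hence $p\mid c(f)$. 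Finally, since $d_i\le|C_i|$ and the $C_i$ are pairwise disjoint subsets of $\{1,\dots,n\}$, the integer $c(f)=\lcm(d_1,\dots,d_r)$, and therefore every divisor $p$ of it, is the order of a permutation of $n$ letters.

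The step I expect to be the main obstacle is the middle paragraph: extracting \emph{exact} linear identities on the critical nodes from the mere supersolution inequalities $y_{j+1}\ge My_j$. This needs the telescoping argument, two applications of Proposition~\ref{prop:4.4}, the normal form of $M$, and the Perron eigenvector trick precisely in order to rule out any contribution from the downstream block $D$. A secondary point requiring care is that critical nodes are preserved under taking powers: this is immediate for maps via Theorem~\ref{thm:8.1}, but at the matrix level (for $M$ versus $M^p$) it rests on the cyclic structure of the irreducible critical blocks.
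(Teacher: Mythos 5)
Your proof is correct, and it runs along a genuinely different line from the paper's. The paper first forms $g=f^{c(f)}$, uses Theorem~\ref{thm:8.1} to show $c(g)=1$, picks $M\in\partial g(v)$ at a t-stable fixed point $v$ with $\mathcal{G}^c(M)=\mathcal{G}^c(g)$, deduces from $\xi\geq M^p(\xi-v)+v$ and Proposition~\ref{prop:4.4} that $\xi-v$ is fixed by $M^k$ on $C\cup D$, and then passes to $z=\xi\wedge g(\xi)\wedge\cdots\wedge g^{p-1}(\xi)$ and uses $g^\omega$ together with Lemma~\ref{lem:5.2} to force $g(\xi)=\xi$, i.e.\ $p\mid c(f)$. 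You instead keep the possibly nontrivial cyclicity, anchor at the auxiliary t-stable fixed point $u\le\xi_j$ produced as in Corollary~\ref{cor:7.4.1} so that the differences $y_j=\xi_j-u$ are nonnegative, and show that on the critical set $C$ the orbit $(y_j)_C$ is an \emph{exact} linear orbit of $M_{CC}$; the technical crux is killing the downstream contribution $M_{CD}(y_j)_D$ via the decomposition $(M^p)_{CD}=\sum_\ell (M_{CC})^\ell M_{CD}(M_{DD})^{p-1-\ell}$ and the left Perron eigenvector, after which Theorem~\ref{thm:8.5} bounds the period of $(y_0)_C$ and Theorem~\ref{thm:5.3} (applied to $f^p$, with $N^c(f^p)=C$ by Theorem~\ref{thm:8.1}) transfers that period back to $\xi$. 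Both proofs rely on the same toolkit (Theorem~\ref{thm:4.5}, Propositions~\ref{prop:4.3}--\ref{prop:4.4}, Theorems~\ref{thm:8.1} and~\ref{thm:8.5}), but your route gives the cleaner structural statement that the critical coordinates of a t-stable periodic orbit literally form a matrix orbit of $M_{CC}$, at the price of a more delicate exactness argument; the paper's route avoids that argument by first reducing to the cyclicity-one map $f^{c(f)}$ (which also sidesteps the need to check $N^c(M^p)=C$, since with $c(M)=1$ the critical blocks are primitive). One small caution in your write-up: when you invoke Proposition~\ref{prop:4.4} for $M^p$ you should say explicitly that you are only using parts (i)--(ii) on the critical block $C$, which you correctly identify as $N^c(M^p)$; the downstream set of $M^p$ need not coincide with $D$ in general, but you never use it, so your argument is unaffected.
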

\begin{proof}
Let $v\in\mathcal{D}$ be a t-stable fixed point of $f$ and let $\xi\in \mathcal{D}$  
be a t-stable periodic point of $f$ with period $p$. 
Put $g=f^{c(f)}$ and note that $c(g)=1$. 
Indeed, by Theorem \ref{thm:8.1} we get that $\mathcal{G}^c(g)=\mathcal{G}^c(f^{c(f)})=(\mathcal{G}^c(f))^{c(f)}=\cup_{i=1}^s \mathcal{G}_i^{c(f)}$, where $\mathcal{G}_1,\ldots,\mathcal{G}_s$ are the disjoint strongly connected components of 
$\mathcal{G}^c(f)$. 
Let $c_i=c(\mathcal{G}_i)$ for $1\leq i\leq s$ and note that $c_i$ divides $c(f)$. 
It well-known that if $\mathcal{G}_i$ is a strongly connected graph with cyclicity $c_i$, then $c(\mathcal{G}_i^{kc_i})=1$ for all $k\geq 1$ (see \cite[Section 2]{BP}). 
In particular, we get that $c(\mathcal{G}_i^{c(f)})=1$ for all $1\leq i\leq s$. 
Thus, 
\[
c(g)=c(\cup_{i=1}^s \mathcal{G}_i^{c(f)})=\mathrm{lcm}\,\{c(\mathcal{G}_i^{c(f)})\colon 1\leq i\leq s\}=1.
\]

By Theorem \ref{thm:4.5} there exists  $M\in\partial g(v)$  such that 
$\mathcal{G}^c(M)=\mathcal{G}^c(g)$, so $c(M)=c(g)=1$. 
Now let $C$, $D$, $U$, and $I$ as in Proposition \ref{prop:4.3}. 
As $g(v)=v$ and $g(x)-g(v)\geq M(x-v)$ 
for all $x\in\mathcal{D}$, we get that 
\begin{equation}\label{eq:8.1}
g^k(x)\geq M^k(x-v) +v\mbox{\quad for all }x\in\mathcal{D}\mbox{ and }k\geq 1.
\end{equation} 
In particular, 
\begin{equation}\label{eq:8.1.1}
\xi =g^p(\xi)\geq M^p(\xi-v)+v,
\end{equation}
so that $\xi-v\geq M^p(\xi-v)$. 
It follows that $\xi-v=M^p(\xi-v)$ on $C\cup D$ and $\xi-v=0$ on $D$ from 
Proposition \ref{prop:4.4}. 
This implies that $g^p(\xi)\geq \xi$ on $C\cup D$. 

Put $F=C\cup D$ and $G=U\cup I$. By definition we have that $M_{FG}=0$, and so 
$M^k_{FG}=0$ for all $k\geq 1$. From this we deduce that 
\[
(\xi-v)_F=(M^p(\xi-v))_F=(M_{FF})^p(\xi-v)_F=M^p_{FF}(\xi-v)_F.
\] 
The  matrix $M$ is stable and has cyclicity one. Therefore the matrix $M_{FF}$ is 
also stable and has cyclicity one. This implies that any periodic point of $M_{FF}$ must have period one by Theorem \ref{thm:8.5}. Thus, we find that 
$(\xi-v)_F=M_{FF}(\xi-v)_F$, so that $(\xi-v)_F=M^k_{FF}(\xi-v)_F$ for all $k\geq 1$. 
Since $M_{FG}=0$ we deduce that 
\[
(\xi-v)=M^k(\xi-v)\mbox{\quad on }F.
\]

From (\ref{eq:8.1}) it follows that $g^k(\xi)\geq \xi$ on $F$. 
Let $z=\xi\wedge g(\xi)\wedge\ldots\wedge g^{p-1}(\xi)$. 
Clearly $z\leq \xi$ and $z=\xi$ on $F$. As $g(z)\leq z$, it follows from 
Lemma \ref{lem:7.1} that $g^\omega(z)=\lim_{k\to\infty} g^k(z)$ exists 
and $g^\omega(z)=z$ on $C$. Thus, $g^\omega(z)=\xi$ on $C$. 
Note that $g^\omega(z)$ and $\xi$ are fixed points of $g^p$, and $\xi$ is a t-stable fixed point of $g^p=f^{p c(f)}$. Therefore it follows from Lemma \ref{lem:5.2} that $g^\omega(z)\geq \xi$. As $g^\omega(z)\leq z\leq\xi$, we conclude that $g^\omega(z)=z=\xi$. Hence $f^{c(f)}(\xi)=g(\xi)=\xi$ from which we conclude that the period of 
$\xi$ divides $c(f)$.  
\end{proof}
Remark that if $f$ is strongly monotone convex map with a t-stable fixed point $v\in\mathcal{D}$, then every $P\in\partial f(v)$ is positive by Proposition \ref{prop:2.2}. 
Hence $c(f)=1$ in that case, and  therefore $f$ has no t-stable periodic points except its t-stable fixed points. 
We also like to point out that in Theorem \ref{thm:8.6} the t-stability 
assumptions are essential. Indeed, consider
\[
A=\left(\begin{array}{ccc}
\cos \alpha&\sin \alpha &0\\
-\sin \alpha &\cos \alpha &0\\
0&0&b\end{array}\right)
\mbox{\quad and \quad}
P=
\left(\begin{array}{ccc}
1&0&1\\ 0&1&1\\ -\alpha&-\alpha&1
\end{array}\right),
\]
then
\[
B=PAP^{-1} \sim
\left(\begin{array}{ccc}
1&\alpha b &b-1\\ \alpha(b-2)&1&b-1\\ \alpha(b-1) &\alpha(b-1)&b
\end{array}\right)
\]
when $\alpha>0$ is sufficiently small. 
Thus, for $b>2$, the matrix $B$ is nonnegative for all sufficiently small $\alpha > 0$.
Now if $\alpha=2\pi/p$, then $A$ has a periodic point of period $p$ and hence $B$ 
has one too.
This shows that a monotone convex map may have (unstable) periodic orbits
with arbitrary large periods. We also remark that if $\alpha>0$ is an irrational 
multiple of $\pi$, then $B$ has an unstable bounded orbit that does not converge to a 
periodic orbit.   

\section{Global convergence and non-expansiveness}
In this final section we give a condition under which every orbit of a convex  monotone 
map $f\colon\mathcal{D}\to\mathcal{D}$, where $\mathcal{D}=\mathbb{R}^n$,  converges to a Lyapunov stable periodic orbit. 
To present it we need the notion of the \emph{recession map} $\hat{f}$ of $f$, which can be defined by 
\[
\hat{f}(x)=\lim_{\lambda\to\infty} \frac{1}{\lambda} f(\lambda x)\mbox{\quad for all }
x\in\mathbb{R}^n.
\]
Since $f\colon\mathbb{R}^n\to\mathbb{R}^n$ is convex, 
$\hat{f}(x)\in (\mathbb{R}\cup \{\infty\})^n$ exists for each $x\in\mathbb{R}^n$ and is equal to 
\begin{equation}\label{eq:9.1} 
\hat{f}(x)=\sup_{y\in\mathbb{R}^n} f(y+x)-f(y)
\end{equation}
(see \cite[Theorem 8.2]{Rock}). 
As $\hat{f}$ is the point-wise limit of a convex monotone map, $\hat{f}$ is also convex and monotone. 
The next theorem shows that if $\hat{f}$ has all its orbits bounded from above, then $f$ is non-expansive with respect to a polyhedral norm. 
\begin{theorem}\label{thm:9.1} 
If $f\colon\mathbb{R}^n\to\mathbb{R}^n$ is a convex monotone map and the recession map $\hat{f}$ has all its orbits bounded from above, then $f$ is non-expansive with respect to the norm $\|\cdot\|_v$ given in Theorem \ref{thm:6.5}. 
\end{theorem}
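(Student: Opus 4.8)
The plan is to reduce the statement to Theorem~\ref{thm:6.5} applied to the recession map $\hat f$ itself. The point is that $\hat f$ is a convex monotone \emph{positively homogeneous} map fixing $0$, and that ``$\hat f$ has all its orbits bounded from above'' is, in view of Definition~\ref{def:3.2} and the identity $(\hat f)'_0=\hat f$, precisely the statement that $0$ is a \emph{t-stable} fixed point of $\hat f$. Hence Theorem~\ref{thm:6.5} produces the $v\gg 0$, the $\alpha>0$, and the polyhedral norm $\|\cdot\|_v$ of~\eqref{eq:6.3}, and the remaining task is to push the non-expansiveness from $\hat f$ down to $f$ by means of the one-sided inequality $f(x)-f(y)\le\hat f(x-y)$ for all $x,y\in\mathbb{R}^n$, which follows at once from~\eqref{eq:9.1}.

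First I would dispose of the routine facts about $\hat f$. Since $\mathds 1\ge 0$, its orbit under $\hat f$ is bounded above, so $\hat f(\mathds 1)\in\mathbb{R}^n$; by positive homogeneity $\hat f(\mu\mathds 1)\in\mathbb{R}^n$ for all $\mu\ge 0$, and then for arbitrary $x$ monotonicity together with~\eqref{eq:9.1} gives $f(x)-f(0)\le\hat f(x)\le\hat f(\|x\|_\infty\mathds 1)<\infty$, so $\hat f\colon\mathbb{R}^n\to\mathbb{R}^n$ is finite valued. As a pointwise limit of the convex monotone maps $x\mapsto\lambda^{-1}f(\lambda x)$ it is convex and monotone, it fixes $0$, and it is positively homogeneous, so $(\hat f)'_0=\hat f$ and the hypothesis makes $0$ a t-stable fixed point of $\hat f$, as claimed. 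Thus Theorem~\ref{thm:6.5} applies.

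Then I would re-run the argument in the proof of Theorem~\ref{thm:6.5} with the conjugated map $\tilde f:=W^{-1}\circ f\circ W$ in place of $g=W^{-1}\circ\hat f\circ W$, where $W=\mathrm{diag}(v)$; note that $\widehat{\tilde f}=g$, since $\widehat{\tilde f}(x)=W^{-1}\hat f(Wx)$. From the proof of Theorem~\ref{thm:6.5} we keep the constants $0<\lambda<1$ and $C>0$ with $\alpha>C/(1-\lambda)$, together with the estimates $\|g_B(x_B)\|_\infty\le\lambda\|x_B\|_\infty$ and $g_A(x_A,x_B)\le\|x_A\|_\infty\mathds 1+C\|x_B\|_\infty\mathds 1$, i.e.\ \eqref{eq:6.4} with $y=0$ and \eqref{eq:6.5}. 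For fixed $x,y$, put $z=x-y$ and $\delta=\tilde f(x)-\tilde f(y)$; applying $f(x)-f(y)\le\hat f(x-y)$ at the points $Wx,Wy$ and then $W^{-1}$ gives $\delta\le g(z)$ and $-\delta\le g(-z)$, hence $|\delta_i|\le\max\{g(z)_i,g(-z)_i\}$ for every $i$, this maximum being $\ge 0$ because $g(z)+g(-z)\ge 2g(0)=0$. Feeding the two estimates into this (and using $\|(-z)_A\|_\infty=\|z_A\|_\infty$, $\|(-z)_B\|_\infty=\|z_B\|_\infty$) yields $\|\delta_A\|_\infty\le\|z_A\|_\infty+C\|z_B\|_\infty$ and $\|\delta_B\|_\infty\le\lambda\|z_B\|_\infty$, and then, exactly as at the end of the proof of Theorem~\ref{thm:6.5}, $\|\delta_A\|_\infty+\alpha\|\delta_B\|_\infty\le\|z_A\|_\infty+(C+\alpha\lambda)\|z_B\|_\infty\le\|z_A\|_\infty+\alpha\|z_B\|_\infty$, so $\tilde f$ is non-expansive for $\|\cdot\|'$ and therefore $f$ is non-expansive for $\|W^{-1}(\cdot)\|'=\|\cdot\|_v$. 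The only genuine difference from the proof of Theorem~\ref{thm:6.5}, and the point that needs care, is that $f$ itself need not admit the block-triangular decomposition of Lemma~\ref{lem:6.1}: there is no ``$f_B$'' depending on $x_B$ alone, so the whole estimate must be routed through the one-sided inequality $f(x)-f(y)\le\hat f(x-y)$. This goes through precisely because the bounds on $g$ borrowed from Theorem~\ref{thm:6.5} depend on $x-y$ only via $\|z_A\|_\infty$ and $\|z_B\|_\infty$.
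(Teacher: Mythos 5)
Your proof is correct and follows the same overall strategy as the paper's: observe via $(\hat f)'_0=\hat f$ that $0$ is a t-stable fixed point of $\hat f$, invoke Theorem~\ref{thm:6.5} for $\hat f$ to get the norm $\|\cdot\|_v$, and transfer the estimate to $f$ through the sandwich $-\hat f(-x)\le f(y+x)-f(y)\le\hat f(x)$ coming from~\eqref{eq:9.1}. Where you depart from the paper is the final step, and in fact your version repairs an imprecision there. The paper concludes with the chain $\|f(y+x)-f(y)\|_v\le\max\{\|\hat f(-x)\|_v,\|\hat f(x)\|_v\}\le\|x\|_v$, justified only by the absolute monotonicity of $\|\cdot\|_v$; but that property yields only $\|f(y+x)-f(y)\|_v\le\bigl\|\,\max(|\hat f(x)|,|\hat f(-x)|)\,\bigr\|_v$ (componentwise max), and since $\|\cdot\|_v$ is a \emph{sum} of two scaled sup-norms one has $\|\max(|a|,|b|)\|_v\ge\max\{\|a\|_v,\|b\|_v\}$ in general, not $\le$. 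Your approach of unpacking the proof of Theorem~\ref{thm:6.5}, keeping the intermediate bounds~\eqref{eq:6.4} and~\eqref{eq:6.5} which control the $A$-block and $B$-block separately, and then bounding $|\delta_i|$ by $\|z_A\|_\infty+C\|z_B\|_\infty$ for $i\in A(\hat f)$ and by $\lambda\|z_B\|_\infty$ for $i\in B(\hat f)$ simultaneously for $g(z)$ and $g(-z)$, before reassembling with $\alpha>C/(1-\lambda)$, is exactly the careful argument that makes the conclusion go through. So your proof is not just equivalent to the paper's; it is the rigorous form of it.
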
 
\begin{proof} We remark that $\hat{f}$ is a convex, monotone, and positively homogeneous map, which has $0$ as a t-stable fixed point, because $(\hat{f})'_0=\hat{f}$ and $\hat{f}$ has all its orbits bounded from above. 
Moreover, 
\begin{equation}\label{eq:9.2} 
-\hat{f}(-x)\leq f(y+x)-f(y)\leq \hat{f}(x)
\end{equation}
for all $x\in\mathbb{R}^n$ by (\ref{eq:9.1}). 

Let $\|\cdot\|_v$ be the polyhedral norm from Theorem \ref{thm:6.5} and remark that 
$\hat{f}$ is non-expansive with respect to $\|\cdot\|_v$. 
Clearly $\|u\|_v\leq \|w\|_v$ if $u,w\in\mathbb{R}^n$ are such that $0\leq |u|\leq |w|$, where $|z|=(|z_1|.\,\ldots,|z_n|)$.  Therefore it follows from (\ref{eq:9.2}) that 
\[
\|f(y+x)-f(y)\|_v\leq \max\{\|\hat{f}(-x)\|_v,\|\hat{f}(x)\|_v\}\leq \|x\|_v
\] 
for all $x,y\in\mathbb{R}^n$. Thus, $f$ is also non-expansive with respect to 
$\|\cdot\|_v$.
\end{proof}
This theorem has the following consequence.
\begin{corollary}\label{cor:9.2}
If $f\colon\mathbb{R}^n\to\mathbb{R}^n$ is a convex monotone  map
and $f$ is non-expansive with respect to some norm on
$\mathbb{R}^n$, then $f$ is non-expansive with respect to a
polyhedral norm.
\end{corollary}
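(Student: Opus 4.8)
The plan is to reduce the statement to Theorem~\ref{thm:9.1}, which already produces a polyhedral norm $\|\cdot\|_v$ for which $f$ is non-expansive, provided the recession map $\hat f$ has all its orbits bounded from above. So the entire task is to verify that hypothesis from the assumption that $f$ is non-expansive with respect to some norm $\|\cdot\|$ on $\mathbb{R}^n$.

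First I would check that $\hat f$ is finite-valued and is itself non-expansive with respect to the \emph{same} norm $\|\cdot\|$. Using the description $\hat f(x)=\lim_{\lambda\to\infty}\lambda^{-1}f(\lambda x)$, and writing $\lambda^{-1}\big(f(\lambda x)-f(\lambda x')\big)$, non-expansiveness of $f$ gives $\|\lambda^{-1}(f(\lambda x)-f(\lambda x'))\|\le\lambda^{-1}\|\lambda x-\lambda x'\|=\|x-x'\|$ for every $\lambda>0$. Since the difference quotients $\lambda^{-1}(f(\lambda x)-f(0))$ increase coordinatewise as $\lambda\to\infty$ (convexity) and are bounded in norm by $\|x\|+\lambda^{-1}\|f(0)\|$, the limit $\hat f(x)$ exists and is finite for every $x$ (all norms on $\mathbb{R}^n$ being equivalent, each coordinate is controlled by the norm). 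Passing to the limit $\lambda\to\infty$ and using continuity of $\|\cdot\|$ then yields $\|\hat f(x)-\hat f(x')\|\le\|x-x'\|$ for all $x,x'\in\mathbb{R}^n$, i.e.\ $\hat f$ is non-expansive for $\|\cdot\|$.

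Next, $\hat f$ is positively homogeneous, so $\hat f(0)=0$; hence every iterate $\hat f^k$ is non-expansive for $\|\cdot\|$ and fixes $0$, which gives $\|\hat f^k(x)\|=\|\hat f^k(x)-\hat f^k(0)\|\le\|x\|$ for all $k\ge 1$. Thus every orbit of $\hat f$ is bounded in $(\mathbb{R}^n,\|\cdot\|)$, and in particular bounded from above (coordinatewise, again by equivalence of norms). Theorem~\ref{thm:9.1} now applies directly and gives that $f$ is non-expansive with respect to the polyhedral norm $\|\cdot\|_v$ of Theorem~\ref{thm:6.5}, which is exactly the assertion.

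I expect no serious obstacle here: the only point requiring a little care is the justification that $\hat f$ is finite-valued and that one may interchange the norm with the limit defining $\hat f$, and both are immediate once one uses that $f$ is non-expansive (so the difference quotients are uniformly bounded) and that all norms on the finite-dimensional space $\mathbb{R}^n$ are equivalent. Everything else is a one-line invocation of the already-established Theorems~\ref{thm:6.5} and~\ref{thm:9.1}.
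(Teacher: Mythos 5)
Your argument is correct and is essentially the one in the paper: both proofs observe that non-expansiveness of $f$ with respect to some norm passes to the recession map $\hat f$, so $\hat f(0)=0$ forces all orbits of $\hat f$ to be bounded, and then Theorem~\ref{thm:9.1} delivers the polyhedral norm. You simply spell out in more detail the finite-valuedness of $\hat f$ and the passage to the limit, which the paper leaves implicit.
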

\begin{proof}
We note that $\hat{f}(x)=\lim_{\lambda\to\infty} f(\lambda x)/\lambda$ for all 
$x\in\mathbb{R}^n$.
As $f$ is non-expansive with respect to some norm, $\hat{f}$ will be non-expansive 
with respect to the same norm.
This implies that $\hat{f}$ has all its orbits bounded from above, since $\hat{f}(0)=0$.
From Theorem \ref{thm:9.1} we conclude that $f$ is non-expansive with respect to a 
polyhedral norm.
\end{proof}
It is proved in \cite{N1}  that if a map $f\colon\mathbb{R}^n\to\mathbb{R}^n$ is non-expansive with respect to a polyhedral norm,
that every bounded orbit of $f$ converges to a periodic orbit.
Moreover, if the unit ball of the polyhedral norm has $N$ facets, then  the period of each periodic point of a non-expansive map does not exceed 
$\max_k 2^k{\lfloor N/2\rfloor \choose k}$, see \cite{LS2}. 
By using these results, the following global convergence theorem can be proved.
\begin{theorem}\label{thm:9.3}
If $f\colon\mathbb{R}^n\to\mathbb{R}^n$ is a convex monotone map, with a
fixed point, and the recession map $\hat{f}$ has all its orbit
bounded from above, then every orbit of $f$ converges to a
Lyapunov stable periodic orbit of $f$ whose period divides $c(f)$. 
\end{theorem}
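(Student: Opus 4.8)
The plan is to combine the non-expansiveness result (Theorem~\ref{thm:9.1}), the periodic-orbit convergence theory for polyhedrally non-expansive maps \cite{N1}, and the period bound from Theorem~\ref{thm:8.6}. First I would invoke Theorem~\ref{thm:9.1} to obtain a polyhedral norm $\|\cdot\|_v$ on $\mathbb{R}^n$ under which $f$ is non-expansive. Since $f$ has a fixed point $v^*$, every orbit $\mathcal{O}(x;f)$ satisfies $\|f^k(x)-v^*\|_v=\|f^k(x)-f^k(v^*)\|_v\le\|x-v^*\|_v$, so every orbit is bounded. By the result of \cite{N1}, every bounded orbit of a map that is non-expansive with respect to a polyhedral norm converges to a periodic orbit; hence every orbit of $f$ converges to some periodic orbit $\mathcal{O}(\xi)$ of $f$.

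Next I would argue that this limiting periodic orbit is Lyapunov stable and that its period divides $c(f)$. For Lyapunov stability: the $\omega$-limit set of any point is a periodic orbit, and since $f$ is non-expansive with respect to $\|\cdot\|_v$, the map $f^p$ (where $p$ is the period of $\xi$) is $\|\cdot\|_v$-non-expansive and fixes each $f^i(\xi)$; non-expansiveness then forces each $f^i(\xi)$ to be a Lyapunov stable fixed point of $f^p$, so $\mathcal{O}(\xi)$ is a Lyapunov stable periodic orbit in the sense defined before Corollary~\ref{cor:7.4.1}. In particular each $f^i(\xi)$ is a Lyapunov stable, hence t-stable, fixed point of $f^p$, so $\xi$ is a t-stable periodic point of $f$. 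Now Theorem~\ref{thm:8.6} applies: $f$ has a t-stable fixed point (namely any fixed point, which is t-stable since all orbits of $f$, hence of $f'$ at that point via the estimate $f^k(v^*+x)\ge v^*+(f'_{v^*})^k(x)$ and boundedness, are bounded), and therefore the period of the t-stable periodic point $\xi$ divides $c(f)$.

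The main obstacle I anticipate is the bookkeeping needed to verify that the hypotheses of Theorem~\ref{thm:8.6} genuinely hold here --- specifically, that $f$ has a \emph{t-stable} fixed point (not merely a fixed point) so that $c(f)$ and the critical graph $\mathcal{G}^c(f)$ are even defined. This should follow because $\hat f$ having all orbits bounded from above, together with $-\hat f(-x)\le f(y+x)-f(y)$, forces the orbits of $f$ to be bounded below, and combined with the existence of a fixed point $v^*$ one checks via Proposition~\ref{prop:3.1} that $v^*$ is in fact Lyapunov stable, in particular t-stable; alternatively, polyhedral non-expansiveness directly gives Lyapunov stability of $v^*$. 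A second, more minor point is matching the precise notion of "Lyapunov stable periodic orbit" used in the statement with what the non-expansiveness machinery delivers; this is handled by the observation that $f^i(\xi)$ is a Lyapunov stable fixed point of $f^p$ and then propagating neighbourhoods forward under the finitely many iterates $f,\dots,f^{p-1}$, exactly as in the proof of Corollary~\ref{cor:7.4.1}. Once these two points are dispatched, the theorem follows by assembling Theorem~\ref{thm:9.1}, \cite{N1}, and Theorem~\ref{thm:8.6}.
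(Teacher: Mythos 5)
Your proposal is correct and takes essentially the same route as the paper: invoke Theorem~\ref{thm:9.1} to get polyhedral non-expansiveness, use the result of \cite{N1} together with boundedness of orbits (from the existence of a fixed point) to get convergence to periodic orbits, observe that non-expansiveness makes periodic orbits Lyapunov stable and hence t-stable via Proposition~\ref{prop:3.1}, and then apply Theorem~\ref{thm:8.6} for the period bound. The extra bookkeeping you flag --- checking that $f$ has a t-stable fixed point so that $c(f)$ is defined, and reconciling the notion of Lyapunov stable periodic orbit --- is handled implicitly in the paper's terse proof exactly as you describe.
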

\begin{proof}
As $\hat{f}$ has all its orbits bounded from above, we know by
Theorem \ref{thm:9.1} that $f$ is non-expansive with respect to a
polyhedral norm, so that all periodic orbits of $f$ are Lyapunov stable. 
Since $f$ has a fixed point, it follows from \cite{N1} that
every orbit of $f$ converges to a periodic orbit.  Proposition \ref{prop:3.1} 
implies that  every periodic point of $f$ is t-stable, and hence 
the period of each periodic point divides $c(f)$ by Theorem \ref{thm:8.6}, which completes the proof. 
\end{proof}

 \footnotesize

\bibliography{convex}
\bibliographystyle{plain}

\end{document}